\newtheorem{thm}{Theorem}[section]
\newtheorem{cor}[thm]{Corollary}
\newtheorem{lem}[thm]{Lemma}
\newtheorem{prop}[thm]{Proposition}
\newtheorem{defn}[thm]{ \bf{Definition}}
\theoremstyle{remark}
\newcommand{\EQ}[1]{\begin{align*}\begin{split} #1 \end{split}\end{align*}}
\newcommand{\EQn}[1]{\begin{align}\begin{split} #1 \end{split}\end{align}}
\newcommand{\EQnn}[1]{\begin{align} #1 \end{align}}
\newcommand{\enu}[1]{\begin{enumerate} #1 \end{enumerate}}
\newcommand{\Del}[1]{}
\def\norm#1{\left\|#1\right\|}
\def\normb#1{\big\|#1\big\|}
\def\abs#1{\left|#1\right|}
\def\absb#1{\big|#1\big|}
\def\brk#1{\left(#1\right)}
\def\brko#1{(#1)}
\def\brkb#1{\big(#1\big)}
\def\fbrk#1{\left\lbrace#1\right\rbrace}
\def\fbrko#1{\lbrace#1\rbrace}
\def\fbrkb#1{\big\lbrace#1\big\rbrace}
\def\jb#1{\langle#1\rangle}
\def\wt#1{\widetilde{#1}}
\def\wh#1{\widehat{#1}}
\def\wb#1{\overline{#1}}
\def\pd{\partial}
\newcommand{\ra}{{\rightarrow}}
\newcommand{\hra}{{\hookrightarrow}}
\def\loe{\leqslant}
\def\goe{\geqslant}
\def\lsm{\lesssim}
\def\gsm{\gtrsim}
\newcommand{\N}{{\mathbb N}}
\newcommand{\T}{{\mathbb T}}
\newcommand{\R}{{\mathbb R}}
\newcommand{\C}{{\mathbb C}}
\newcommand{\PP}{{\mathbb P}}
\newcommand{\F}{{\mathcal{F}}}
\newcommand{\A}{{\mathcal{A}}}
\newcommand{\B}{{\mathcal{B}}}
\newcommand{\Sch}{{\mathcal{S}}}
\newcommand{\Q}{{\mathcal{Q}}}
\newcommand{\supp}{{\mbox{supp}\ }}
\def\dx{\mathrm{\ d} x}
\def\dy{\mathrm{\ d} y}
\def\ds{\mathrm{\ d} s}
\def\dt{\mathrm{\ d} t}
\newcommand{\re}{{\mathrm{Re}}}
\newcommand{\im}{{\mathrm{Im}}}
\def\ep{\varepsilon}
\def\al{\alpha}
\def\Om{\Omega}
\def\om{\omega}
\def\ph{\varphi}
\def\th{\theta}
\def\de{\delta}
\def\De{\Delta}
\def\la{\lambda}
\def\ga{\gamma}
\newcommand{\I}{\infty}
\def\rev#1{\frac{1}{#1}}
\def\half#1{\frac{#1}{2}}
\def\bx{\square}
\numberwithin{equation}{section}
\begin{document}
\title[Energy critical NLS]{Almost sure scattering for the nonradial energy-critical NLS with arbitrary regularity in 3D and 4D cases}

\author{Jia Shen}
\address{(J. Shen) Center for Applied Mathematics\\
Tianjin University\\
Tianjin 300072, China}
\email{shenjia@tju.edu.cn}

\author{Avy Soffer}
\address{(A. Soffer) Rutgers University\\
	Department of Mathematics\\
	110 Frelinghuysen Rd.\\
	Piscataway, NJ, 08854, USA\\}
\email{soffer@math.rutgers.edu}
\thanks{}

\author{Yifei Wu}
\address{(Y. Wu) Center for Applied Mathematics\\
	Tianjin University\\
	Tianjin 300072, China}
\email{yerfmath@gmail.com}
\thanks{}

\subjclass[2010]{35K05, 35B40, 35B65.}
\keywords{Non-linear Schr\"odinger equations, long time behaviour, random data theory}

\date{}

\begin{abstract}\noindent
In this paper, we study the defocusing energy-critical nonlinear Schr\"odinger equations
$$
i\pd_t u + \De u =  |u|^{\frac{4}{d-2}} u.
$$
When $d=3,4$, we prove the almost sure scattering for the equations with non-radial data in $H_x^s$ for any $s\in\R$. In particular, our result does not rely on any spherical symmetry, size or regularity restrictions.
\end{abstract}

\maketitle

\tableofcontents

\section{Introduction}

In this paper, we consider the nonlinear Schr\"odinger equations (NLS):
\EQn{
	\label{eq:nls}
	\left\{ \aligned
	&i\pd_t u + \De u = \mu |u|^p u, \\
	& u(0,x) = u_0(x),
	\endaligned
	\right.
}
where $p>0$, $\mu=\pm1$, and
$u(t,x):\R\times\R^d\rightarrow \C$ is a complex-valued function. The positive sign ``$+$" in nonlinear term of \eqref{eq:nls} denotes defocusing source,   and the negative sign ``$-$" denotes the focusing one.

The equation \eqref{eq:nls} has conserved mass
\EQn{\label{NLS:mass}
	M(u(t)) :=\int_{\R^d} \abs{u(t,x)}^2 \dx=M(u_0),
}
and energy
\EQn{\label{NLS:Energy}
	E(u(t)) :=\int_{\R^d} \half 1\abs{\nabla u(t,x)}^2 \dx + \mu \int_{\R^d} \rev{p+2} \abs{u(t,x)}^{p+2} \dx=E(u_0).
}

The class of solutions to equation (\ref{eq:nls}) is invariant under the scaling
\begin{equation}\label{eqs:scaling-alpha}
u(t,x)\to u_\lambda(t,x) = \lambda^{\frac{2}{p}} u(\lambda^2 t, \lambda x) \ \ {\rm for}\ \ \lambda>0,
\end{equation}
which maps the initial data as
\EQn{
u(0)\to u_{\lambda}(0):=\lambda^{\frac{2}{p}} u_0(\lambda x) \ \ {\rm for}\ \ \lambda>0.
}
Denote
$$
s_c=\frac d2-\frac{2}{p},
$$
then the scaling  leaves  $\dot{H}^{s_{c}}$ norm invariant, that is,
\begin{eqnarray*}
	\|u(0)\|_{\dot H^{s_{c}}}=\|u_{\lambda}(0)\|_{\dot H^{s_{c}}}.
\end{eqnarray*}
This gives the scaling critical exponent $s_c$. Let
$$
2^*=\I, \mbox{ when } d=1 \mbox{ or } d=2;  \quad
2^*=\frac4{d-2}, \mbox{ when }  d\goe 3.
$$
Therefore, according to the conservation law, the equation is called mass or $L_x^2$ \textit{critical} when $p=\frac 4d$, and energy or $\dot H_x^1$ \textit{critical} when $p=\frac4{d-2}$. Moreover, when $\frac 4d<p<2^*$, we say that the equation is \textit{inter-critical}.

Let us now recall the well-posedness and scattering theory of NLS \eqref{eq:nls}. There are extensive studies about the subject, and we do not intend to mention all the results. Therefore in this paper, we mainly focus on the energy critical case. First, the equation is locally well-posed in $H_x^s$ with $s\goe 1$, and ill-posed in the case $s<1$, see \cite{Caz03book,CCT03illposed}.

The first global well-posedness and scattering result for the energy critical NLS was established by Bourgain \cite{Bou99JAMS}. Then, the defocusing case was proved by Colliander, Keel, Staffilani, Takaoka, and Tao \cite{CKSTT08Annals} in three dimensional case, and Ryckman and Visan  \cite{RV07AJM,Vis07Duke} in higher dimensional cases. For the focusing equations, Kenig and Merle \cite{KM06Invent} first studied the dynamics below the energy of ground state, and then the result is extended by Killip and Visan \cite{KV10AJM} in five and higher dimensions, Dodson \cite{Dod19ASENS} in four dimension.

Now, we turn to the probability theory of NLS. Although there are  ill-posedness results below the critical regularity for NLS due to the result of Christ, Colliander, and Tao \cite{CCT03illposed}, Bourgain \cite{Bou94CMP,Bou96CMP} first introduced a probabilistic method to study the well-posedness problem for periodic NLS for ``almost'' all the initial data in super-critical spaces. The probabilistic well-posedness result for super-critical wave equations on compact manifolds was also studied by Burq and Tzvetkov \cite{BT08inventI,BT08inventII}. There have been extensive studies about such subject since then, and we refer the readers to \cite{BOP19note} for more complete overviews.

Next, we only review the study of random data theory mainly for NLS on $\R^d$. There are several ways of randomization for the initial data. We start with the Wiener randomization, which is related to a unit-scale decomposition in frequency. For NLS, the almost sure local well-posedness, small data scattering, and ``conditional'' global well-posedness were considered in \cite{BOP15TranB,BOP19TranB,PW18lp,Bre19tunis,SSW21cubicNLS}.

The random data global well-posedness for the energy critical problems was first proved in the context of non-linear wave equations (NLW) by Pocovnicu \cite{Poc17JEMS}. See also \cite{OP16JMPA} for the 3D result. As for the NLS, Oh, Okamoto, and Pocovnicu \cite{OOP19DCDS} studied the almost sure global well-posedness in the energy critical case when $d=5,6$.

The large data almost sure scattering was first obtained by Dodson, L\"uhrmann, and Mendelson \cite{DLM20AJM} for the 4D, defocusing, energy-critical NLW with randomized radial data in $H_x^s$ for $\frac12<s<1$, using a double bootstrap argument combining the energy and Morawetz estimates. The first almost sure scattering result for NLS was given by Killip, Murphy, and Visan \cite{KMV19CPDE}. They proved the result for 4D, defocusing, energy-critical case with almost all the radial initial data with $\frac56<s<1$. This result was then improved to $\frac12<s<1$ by Dodson, L\"uhrmann, and Mendelson \cite{DLM19Adv}. While all the above scattering results concerns the energy critical case, the authors \cite{SSW21cubicNLS} and Camps \cite{Cam21NLS} independently proved the random data global well-posedness and scattering for the 3D defocusing cubic NLS, which is a typical model of inter-critical NLS.

We remark that the Wiener randomization is closely related to the modulation space introduced by Feichtinger \cite{Fei83modulation}. Such space has been applied to non-linear evolution equations before the development of Wiener randomization, dating back to the results of Wang, Zhao, Guo, and Hudzik \cite{WZG06JFA,WH07JDE}.

There are also other kinds of randomization for NLS on $\R^d$. Burq, Thomann, and Tzvetkov \cite{BTT13Fourier,BT20scattering} introduced a randomization based on the invariant measure for NLS with harmonic potential, and proved almost sure $L^2$-scattering for 1D defocusing NLS. Such randomization relies on the countable eigen-basis of the Laplacian, while the Wiener randomization directly comes from the decomposition in frequency space. In addition, Murphy \cite{Mur19ProAMS} introduced an another randomization based on the physical space unit-scale decomposition to study the almost sure wave operator problem.

Furthermore, other randomizations have been applied to the almost sure global well-posedness and scattering of energy critical models. In \cite{Bri18NLW}, Bringmann introduced a randomization based on wave packet decomposition to study the non-radial 4D NLW in $H_x^s$ with $s>\frac{11}{12}$.  
In \cite{Bri20APDE}, Bringmann introduced another randomization based on annuli decomposition, and observed that the smaller the scale of decomposition is, the more smoothing effect one can expect. Based on this, he proved that the 3D energy-critical wave equation is almost surely scattering in $H^s, s>0$ in the radial case.
The related results on non-radial energy-critical nonlinear Klein-Gordon equations were studied by Chen and Wang \cite{CW20NLW} for $d=4$ and $5$.  In the very recently, we also learn that the almost sure scattering for the 4D non-radial NLS was obtained by Spitz \cite{Spi21NLS} in $H_x^s$ with $\frac{5}{7}<s<1$, using a randomization similar to the one introduced by Burq and Krieger \cite{BK19NLW}. 

We remark that all the previous almost sure scattering results require the initial data in $H_x^s$ with $s\goe 0$. A nature question is whether the problem is almost surely well-posed in rougher space. In fact, 
the well known Gaussian white noise is almost surely in $H^s_x, s<-\frac d2$, and the Brownian motion is almost surely in $H^s,s<-\frac12$, see for instance \cite{BO11AdV, Ver11whitenoise}. Therefore, this motivates us to consider the random data problem at arbitrary regularity setting.

In this paper, we are able to prove the almost sure global well-posedness and scattering for 3D and 4D energy critical NLS with non-radial data at arbitrary regularity. 

\subsection{Definition of randomization}\label{sec:construction}
To this end, we first introduce a new version of randomization based on rescaled cubes decomposition in frequency space, inspired by \cite{Bri20APDE} and the Wiener randomization. Before stating the main result, we give some definitions.

Let $a\in\N$. First, for $N\in 2^{\N}$, we denote the cube sets 
\EQ{
O_N=&\fbrkb{\xi\in \R^d:  |\xi_j| \le N, j=1,2,\cdots,d},
}
and then we define
\EQ{
Q_N=& O_{2N}\setminus O_{N}.
}

Next, we make a further decomposition of $Q_N$.
Note that $a$ is a positive integer, we make a partition of the $Q_N$ with the essentially disjoint sub-cubes
\EQ{
\A(Q_N):=\fbrk{Q: Q\text{ is a dyadic cube with length } N^{-a}\text{ and }Q\subset Q_N}.
}
Then, we have $\sharp \A( Q_N)=(2^d-1)N^{d(a+1)}$ and  $ Q_N=\cup_{Q\in\A(Q_N)}Q$.  

Now, we can define the final decomposition
\EQ{
	\Q:=\fbrk{O_1} \cup \fbrk{Q: Q\in \A( Q_N)\text{ and } N\in2^\N}.
}
By the above construction,
\EQ{
\R^d=  O_1 \cup \brk{ \cup_{N\in 2^\N} Q_N} 
= O_1\cup\brk{\cup_{N\in 2^\N}\cup_{Q\in\A(Q_N)}Q} =\cup_{Q\in\Q}Q.
}
Therefore, $\Q$ is a countable family of essentially disjoint caps covering $\R^d$. We can renumber the cubes in $\Q$ as follows:
\EQ{
\Q=\fbrk{Q_j:j\in \N}.
}

\begin{defn}[``Narrowed" Wiener Randomization]\label{defn:randomization}
Let $d\goe 1$. Given $s\in \R$, we set the parameter $a\in\N$  such that   
\EQ{
a>\max\fbrk{3-4s,1-2s, 10}.
}
Let $\wt \psi_j \in C_0^\I(\R^d)$ be a real-valued function such that $0\loe \wt \psi_j\loe 1$ and
\EQ{
	\wt \psi_j(\xi) =\left\{ \aligned
	&1\text{, when $\xi\in Q_j$,}\\
	&\text{smooth, otherwise,}\\
	&0\text{, when $\xi\notin 2Q_j$,}
	\endaligned
	\right.
}
where we use the notation that $2Q_j$ is the cube with the same center as $Q_j$ and with $\mathrm{diam}(2Q_j)=2\mathrm{diam}(Q_j)$. Now, let
\EQ{
	\psi_j(\xi):=\frac{\wt \psi_j(\xi)}{\sum_{j'\in\N}\wt \psi_{j'}(\xi)}.
} 
Then, $\psi_j\in C_0^\I([0,+\I))$ is a real-valued function, satisfying $0\loe \psi_j\loe 1$ and for all $\xi \in\R^d$, $\sum_{j\in\N}\psi_j(\xi)=1$.

Denote the Fourier transform on $\R^d$ by $\F$. Then, for the function $f$ on $\R^d$, we define 
\EQ{
	\bx_jf=\F_\xi^{-1}\brkb{\psi_j(\xi)\F f(\xi)}.
}
Let $(\Om, \A, \PP)$ be a probability space. Let $\fbrk{g_j}_{j\in\N}$ be a sequence of zero-mean, complex-valued Gaussian random variables on $\Om$, where the real and imaginary parts of $g_j$ are independent. Then, for any function $f$, we define its randomization $f^\om$ by
\EQn{\label{eq:randomization}
	f^\om=\sum_{j\in\N} g_j(\om)\bx_j f.
}
\end{defn}

\subsection{Main result}

In the following, we use the statement ``almost every $\om\in\Om$, $PC(\om)$ holds'' to mean that 
$$
\PP\Big(\big\{\om\in \Om: PC(\omega)\> \mbox{ holds} \big\}\Big)=1.
$$
Arguing similarly as in the previous works (see Appendix B. in \cite{BT08inventI}), we can show that this randomization does not improved the regularity of $f$, namely for any $s\in\R$
\EQ{
f\notin H_x^s(\R^d)\text{, then } f^\om\notin H_x^s(\R^d)\text{ almost surely.}
}
Now, we study the defocusing, energy critical NLS with randomized initial data:
\EQn{
	\label{eq:nls-3D}
	\left\{ \aligned
	&i\pd_t u + \De u =  |u|^{\frac{4}{d-2}} u, \\
	& u(0,x) = f^\om(x).
	\endaligned
	\right.
}
Our main result is as follows: 
\begin{thm}[Global well-posedness and scattering]\label{thm:global}
Let $d=3$ or $d=4$. Given any  $s\in \R$ and $f\in H_x^s(\R^d)$. Suppose that the randomization $f^\om$ is defined in Definition \ref{defn:randomization}. Then, for almost every $\om\in \Om$, there exists a global solution $u$ of \eqref{eq:nls-3D} such that
\EQ{
	u-e^{it\De}f^\om\in C(\R;H_x^{1}(\R^d)). 
}
Moreover, the solution $u$ scatters, in the sense that there exist $u_\pm\in H_x^1(\R^d)$ such that
\EQ{
	\lim_{t\ra\pm\I}\norm{u-e^{it\De}f^\om-e^{it\De}u_\pm}_{H_x^1(\R^d)} =0.
}
\end{thm}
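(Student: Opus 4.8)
The plan is to run the standard Bourgain-type decomposition $u = e^{it\Delta}f^\omega + v$, reducing matters to a deterministic well-posedness problem for the nonlinear remainder $v$, which solves
\EQn{\label{eq:plan-v}
i\pd_t v + \De v = |v + F|^{\frac{4}{d-2}}(v+F), \qquad v(0)=0,
}
where $F(t) := e^{it\Delta}f^\omega$ is the free random evolution. First I would establish the probabilistic input: for almost every $\omega$, the linear evolution $F$ lies in a suitable space of Strichartz-type norms at \emph{subcritical} regularity, i.e. $F \in L_t^q L_x^r$ for a range of exponents strictly below the scaling-critical Strichartz line. The key gain, as in \cite{Bri20APDE}, is that the very fine frequency decomposition (cubes of side $N^{-a}$ inside the annulus $Q_N$) produces extra smoothing: each piece $\bx_j f$ is frequency-localized to a cube so small that Bernstein's inequality upgrades integrability and regularity by a power of the (small) cube scale, and Khintchine's inequality then sums these gains in $L^p(\Omega)$. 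The large parameter $a$ is tuned precisely so that, after this gain, $F$ sits in $H^1_x$-scale Strichartz spaces globally in time — this is the mechanism by which \emph{arbitrarily rough} deterministic data $f\in H^s_x$, $s\in\R$, still yields an energy-regular linear flow after randomization.

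Second, granting that $F$ has the desired almost-sure space-time bounds, I would run a (double) bootstrap on $v$ combining the energy increment and an interaction/Morawetz-type estimate, as in \cite{DLM19Adv,KMV19CPDE}. Concretely, one controls $\frac{d}{dt}E(v(t))$ by expanding $|v+F|^{\frac{4}{d-2}}(v+F) - |v|^{\frac{4}{d-2}}v$ into terms each containing at least one factor of $F$; every such term is estimated by Hölder in space-time, placing the $F$-factors in the good probabilistic Strichartz norms and the $v$-factors in energy-controlled norms, so that the energy grows at most polynomially (or stays bounded) on any interval where $v$ is a priori controlled. A parallel estimate for the Morawetz/interaction-Morawetz quantity furnishes a global space-time bound $\|v\|_{L^{q_0}_tL^{r_0}_x(\R\times\R^d)}<\infty$, which in turn closes the bootstrap and upgrades to $v\in C(\R;H^1_x)$ via the Duhamel formula and Strichartz. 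Scattering of $v$ in $H^1_x$ then follows from the finiteness of the global Strichartz norm in the usual way, and setting $u_\pm := \lim_{t\to\pm\infty} e^{-it\Delta}v(t)$ gives the claimed asymptotics, since $u - e^{it\Delta}f^\omega = v$.

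The main obstacle I expect is the interplay at the energy-critical scaling exponent: because the nonlinearity is $\dot H^1_x$-critical, the bootstrap for $v$ has no room to spare, and one must handle the ``worst'' interaction terms — those linear in $v$ and of degree $\frac{4}{d-2}$ in $F$ (and, when $d=3$, the single term where $v$ enters to the critical power with one $F$) — with estimates that are scaling-sharp. This forces the probabilistic Strichartz bounds on $F$ to be pushed up to (or infinitesimally below) the $H^1_x$-critical line \emph{uniformly in time}, which is exactly where the fine-cube randomization and the lower bound $a > \max\{3-4s, 1-2s, 10\}$ are needed; verifying that this single choice of $a$ simultaneously controls all the multilinear terms in both $d=3$ and $d=4$, and that the resulting almost-sure bounds are strong enough to survive the global-in-time Morawetz argument (rather than just local well-posedness), is the technical heart of the proof. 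A secondary subtlety is that $v$ does not conserve energy — one must show the energy increment is \emph{integrable in time}, not merely bounded, to get a genuine global bound and scattering, which is where the decay built into the interaction-Morawetz estimate is essential.
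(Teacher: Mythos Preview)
Your outline captures the broad architecture but misses two structural ingredients that are essential precisely in the regime $s<0$, which is the novelty here.

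\textbf{High--low frequency split with a large parameter $N_0$.} You subtract the \emph{full} linear evolution and set the nonlinear remainder to zero at $t=0$. The paper instead subtracts only the high-frequency piece: $v = e^{it\De}P_{\geq N_0}f^\om$ and $w = u - v$, so $w(0) = P_{\leq N_0}f^\om \in H^1_x$ with $M(0)\lsm N_0^{-2s}$ and $E(0)\lsm N_0^{2(1-s)}$. The high-frequency support of $v$ is what converts the derivative gain from the randomization into \emph{smallness}: for instance $\|\De v\|_{L^2_tL^\infty_x} \lsm N_0^{2-\frac12 a-s+}$, a negative power of $N_0$ once $a$ is chosen as in Definition \ref{defn:randomization}. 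The bootstrap on mass and energy closes only because every increment term carries such a negative power of $N_0$, beating the positive powers coming from $E(0)$; one then fixes $N_0=N_0(A)$ large. With your decomposition the low-frequency part of $F=e^{it\De}f^\om$ contributes $O(1)$ to the relevant norms, and there is no parameter to absorb the large-data constants --- the approach of \cite{DLM19Adv,KMV19CPDE} that you invoke gets away without this only because there $s>\frac12$ and the interaction Morawetz can be run on $u$ directly. Relatedly, when $s<0$ neither $u$ nor $F$ lies in $L^2_x$, so an almost-conserved \emph{mass} for $w$ must be proved alongside the energy; your proposal does not address this.

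\textbf{Morawetz does not yield the scattering norm; perturbation against \cite{CKSTT08Annals,RV07AJM} is a separate step.} In the paper the modified interaction Morawetz gives only $\|w\|_{L^4_{t,x}}$ (or $\||\nabla|^{-1/4}w\|_{L^4_{t,x}}$ in 4D), not the critical scattering norm. Its sole role is to control the energy increment $\int |u|^{\frac{4}{d-2}}u\,\De\bar v\,dx\,dt$ --- note the $\De v$, not $\nabla v$, so your claim that $H^1$-scale Strichartz on $F$ suffices is off by one derivative. The global $X$-bound and scattering are then obtained by a long-time stability argument: on each subinterval where $\|v\|_Y$ is small, $w$ is compared to the solution $\wt w$ of the unperturbed energy-critical NLS with the same data, whose global bound is supplied by \cite{CKSTT08Annals,RV07AJM}. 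You conflate these two mechanisms. A further technical point your proposal does not anticipate: in 3D the Morawetz error contains $\int v\,w^4\,\nabla\bar w\,dx$, which cannot be estimated directly since $w\in H^1\hookrightarrow L^6$ only (one would need $L^8_x$); the paper exploits the algebraic structure of $|u|^4u-|w|^4w$ to transfer the gradient from $w$ onto $v$, which is what makes the Morawetz step close.
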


As shown in Definition \ref{defn:randomization}, for given $s\in\R$, we can find a randomization with the parameter $a=a(s)$, such that the theorem holds. This implies that our result has no regular restriction, and the equation could be solved globally no matter how rough the initial data is. Particularly, when $s<0$, the initial data has infinite mass and energy. Hence, a new difficulty in this situation is that  both the mass and energy conservation laws are not available.

Previously, there is no probabilistic global result for the 3D energy critical NLS with large data. As mentioned above, for the 4D case, the almost sure scattering in the radial case was proved in \cite{DLM19Adv,KMV19CPDE} and extended to the non-radial case in \cite{Spi21NLS} very recently. All the above 4D results consider the initial data in $H_x^s$ with $s>\frac12$, in which case the interaction Morawetz estimate holds for the original solution $u$, and more importantly, all the previous random data scattering results for both NLS and NLW are established in $H_x^s$ with $s>0$. In this paper, we introduce a new method to prove the almost sure global well-posedness as well as the scattering for non-radial NLS in both 3D and 4D cases, without imposing any regularity restriction on the initial data.

Taking the 3D case for example, the main difficulty comes from that the non-linear part of solution does not have any $L_x^p$-estimate with $p>6$. Therefore, we introduce a new strategy to control the energy increment, based on the $\De$-estimate of the linear flow and the modified interaction Morawetz estimate.

When considering the $L_x^2$ sub-critical data, it seems very difficult to establish the Strichartz estimates directly due to the lack of spherical symmetry. By introducing a new randomization, we are able to prove some $L_x^2$ super-critical estimates with good smoothing effect for the linear flow. 

Finally, we remarks that our method also works for $d=5$ and $d=6$ cases when the non-linear term is not necessarily algebraic, by slightly modifying the argument. We do not pursue this issue here, since the argument would be technically complex when invoking the fractional calculus. 

\subsection{Sketch of the proof.} The main ingredient of the proof is summarized as follows.

$\bullet$ \textit{High-low frequency decomposition.} 
The high-low decomposition was introduced by Bourgain \cite{Bou98IMRN}, and then first applied to the probabilistic setting by Colliander-Oh \cite{CO12duke}. 
Here we use the framework in our previous paper in \cite{SSW21cubicNLS}. More precisely,  in order to quantify the size of mass and energy, we first decompose the probability space $\Omega$ by setting 
\EQ{
\wt \Omega_M=\big\{&\omega\in\Om: \|f^\om \|_{H^s} + N_0^{s}\|P_{\loe N_0}f^\om \|_{L_x^2} \\
&+ N_0^{s-1}\|P_{\loe N_0}f^\om \|_{\dot H^1}+\|e^{it\Delta}f^\om\|_{Y(\R)}\loe M\norm{f}_{H^s}\big\},
}
where the $Y$ is some required space-time norm.

Then we  consider $\omega\in \wt\Omega_M$ for each $M$ separately, and make the high-low frequency decomposition as 
$$
v=e^{it\De}P_{\goe N_0}f^\om, \mbox{ and }\quad w=u-v.
$$ 
Then, for any $\omega \in\wt \Omega_M$, there exists a constant $C(M,\norm{f}_{H^s})>0$ such that 
\EQ{
M(0)\loe C(M,\norm{f}_{H^s})N_0^{-2s}\text{, and }E(0)\loe C(M,\norm{f}_{H^s})N_0^{2(1-s)}.
}

The framework here has two benefits:
\begin{itemize}
	\item[(1)] 
	$\wh v$ is supported on $\fbrk{|\xi|\gsm N_0}$.
	\item[(2)] 
	We can explicitly keep track of the mass and energy increment by the large dyadic parameter $N_0$ that is independent of $\om$.
\end{itemize} 

$\bullet$ \textit{Linear estimates with smoothing effect.} 

In this paper, we consider the nonlinear Schr\"odinger equation in non-radial case in $H^s$ for any $s\in \R$. 
We propose a new kind of randomization  based on the repartition of Wiener decomposition in the frequency space. According to the definition of $\bx_j$,  the following Bernstein-like estimate holds:
$$
\big\|\bx_jf\big\|_{L^\infty_x(\R^d)}\lesssim  \||\nabla|^{-\frac d2a} \bx_jf\|_{L^2_x(\R^d)}, 
 \quad\mbox{for each } j\in \N. 
$$
This implies that the smoothing effect only depends on the volume of support set of $\psi_Q$ and one may gain the arbitrary regularity ($|\nabla|^{\frac d2a}$) for non-radial function by choosing sufficiently large $a>0$. 
Due to this, we obtain that for any fixed $s\in\R$, any $f\in H^s(\R^d)$, by choosing  $a$  suitably large,  the following estimate is available:
\EQ{
\norm{\De e^{it\De}f^\om}_{L_t^2 L_x^\I}<+\I\text{, a. e. }\om\in\Om.
}

$\bullet$ \textit{Modified interaction Morawetz estimate.} The purpose here is to bound 
\EQ{
\normb{|\nabla|^{-\frac{d-3}{4}}w}_{L_{t,x}^4}
}
by the interaction Morawetz estimate.
The key point is that we modify the remainder avoiding the terms containing $w^{\frac{4}{d-2}}\nabla w$. More precisely, in the 3D case, the remainder includes the terms like
\EQ{
\int_{\R^3} vw^4\nabla \bar w \dx.
}
Since we only have $L_x^2$ estimate for $\nabla w$, we are forced to use $L_x^8$ for the remaining $w$. However, we only have $L_x^q$-estimate with $q\loe 6$ for $w$. This brings the difficulty to obtain the desired estimate. To overcome this difficulty, the main  technique here is to transfer the derivative from $w$ to $v$, and reduce it to the easy term 
\EQ{
\int_{\R^3} |w|^4w\nabla \bar v \dx.
}
Note that it can not be obtained by integration-by-parts directly, instead it follows by using the structure of nonlinearity. 

This kind of estimate may also be of independent interest. 

$\bullet$ \textit{Mass increment.} Note that when $s<0$, $u_0\notin L_t^\I L_x^2$, which is much different from most of the previous papers.  
Hence, we need the following almost conserved $L^2$-estimate firstly:
$$
\|w\|_{L^\infty_tL_x^2}\lesssim \|w_0\|_{L_x^2}.
$$
One may note that 
$$
\|w(t)\|_{L_x^2}^2=\|w_0\|_{L_x^2}^2+\int_I\int_{\R^d} O(w^{\frac{d+2}{d-2}}v)\,dxdt.
$$
The cubic growth (when $d=3$) of the increment make the obstruction to close the estimate by Gronwall's inequality. To cover the additional increment, we make use of the high-low frequency decomposition above, which  allows us to get extra regularity of $v$ which relies on the reciprocal of $\|w(t)\|_{H^1}$.

$\bullet$ \textit{Energy increment.} To prove the almost energy conservation law, which gives the bound of $H^1$, 
the main task is to control the energy increment
\EQ{
\int_I\int_{\R^d} \De v |w|^{\frac{4}{d-2}}\wb w\dx\dt.
}
Roughly speaking, based on the above estimates, the increment can be bounded by
\EQ{
\norm{\De v}_{L_t^2 L_x^\I} \normb{|\nabla|^{-\frac{d-3}{4}}w}_{L_{t,x}^4}^2 \normb{|\nabla|^{\frac{d-3}{2}}\brkb{w^{\frac{6-d}{d-2}}}}_{L_t^\I L_x^{2}}.
}
In 3D case,
\EQ{
\normb{|\nabla|^{\frac{d-3}{2}}\brkb{w^{\frac{6-d}{d-2}}}}_{L_t^\I L_x^{2}}\lsm \norm{w}_{L_t^\I L_x^6}^3,
}
and in 4D case,
\EQ{
\normb{|\nabla|^{\frac{d-3}{2}}\brkb{w^{\frac{6-d}{d-2}}}}_{L_t^\I L_x^{2}}\lsm \normb{|\nabla|^{\frac12}w}_{L_t^\I L_x^2}.
}
Again, we can cancel the additional energy increment using the $\norm{\De v}_{L_t^2 L_x^\I}$ estimate. This close the bootstrap procedure.

$\bullet$ \textit{Stability.} We also need to adopt the perturbation theory to approximate the perturbed equation \eqref{eq:nls-w}. This idea of iterating the perturbation theory was first employed by \cite{TVZ07CPDE}, and then was extended in the probabilistic setting by \cite{BOP15TranB} and \cite{KMV19CPDE}. This method was widely used in the almost sure scattering theory. In this paper, we are mainly inspired by the recent approach as in \cite{KMV19CPDE}, which is tailored to the equation \eqref{eq:nls-w}.

\subsection{Organization of the paper}
In Section \ref{sec:pre}, we give some notation and useful results. In Section \ref{sec:str}, we prove the almost sure space-time estimates for the linear solution. Then, we prove the 3D case of Theorem \ref{thm:global} in Section \ref{sec:gwp-3d}, and the 4D case in Section \ref{sec:gwp-4d}.

\vskip 1.5cm

\section{Preliminary}\label{sec:pre}

\vskip .5cm
\subsection{Notation}
For any $a\in\R$, $a\pm:=a\pm\epsilon$ for arbitrary small $\epsilon>0$. For any $z\in\C$, we define $\re z$ and $\im z$ as the real and imaginary part of $z$, respectively. For any set $A$, we denote $\sharp A$ as the cardinal number of $A$.

Let $C>0$ denote some constant, and write $C(a)>0$ for some constant depending on coefficient $a$. If $f\loe C g$, we write $f\lsm g$. If $f\loe C g$ and $g\loe C f$, we write $f\sim g$. Suppose further that $C=C(a)$ depends on $a$, then we write $f\lsm_a g$ and $f\sim_a g$, respectively. If $f\loe 2^{-5}g$, we denote $f\ll g$ or $g\gg f$.

Moreover, we write ``a.e. $\om\in\Om$'' to mean ``almost every $\om\in\Om$''.

We use $\wh f$ or $\F f$ to denote the Fourier transform of $f$:
\EQ{
\wh f(\xi)=\F f(\xi):= \int_{\R^d} e^{-ix\cdot\xi}f(x)\rm dx.
}
We also define
\EQ{
\F^{-1} g(x):= \rev{(2\pi)^d}\int_{\R^d} e^{ix\cdot\xi}g(\xi)\rm d\xi.
}
Using the Fourier transform, we can define the fractional derivative $\abs{\nabla} := \F^{-1}|\xi|\F $ and $\abs{\nabla}^s:=\F^{-1}|\xi|^s\F $.  


We also need the usual inhomogeneous Littlewood-Paley decomposition for the dyadic number. Take a cut-off function $\phi\in C_{0}^{\infty}(0,\infty)$ such that $\phi(r)=1$ if $r\loe1$ and $\phi(r)=0$ if $r>2$. 
For dyadic $N\in 2^\N$, when $N\goe 1$, let $\phi_{\loe N}(r) = \phi(N^{-1}r)$ and $\phi_N(r) =\phi_{\loe N}(r)-\phi_{\loe N/2}(r)$. We define the Littlewood-Paley dyadic operator $$f_{\loe N}=P_{\loe N} f := \mathcal{F}^{-1}\brko{ \phi_{\loe N}(|\xi|) \hat{f}(\xi)},$$ and $$f_{N}= P_N f := \mathcal{F}^{-1}\brko{ \phi_N(|\xi|) \hat{f}(\xi)}.$$ We also define that $f_{\goe N}=P_{\goe N} f := f- P_{\loe N} f$, $f_{\ll N}=P_{\ll N} f$, $f_{\gsm N}:=P_{\gsm N}f$, $f_{\lsm N}:=P_{\lsm N}f$, and $f_{\sim N}=P_{\sim N}f$.

Let $\Sch(\R^d)$ be the Schwartz space,  $\Sch'(\R^d)$ be the tempered distribution space, and $C_0^\I(\R^d)$ be the space of all the smooth compact-supported functions.

Given $1\loe p \loe \I$, $L^p(\R^d)$ denotes the usual Lebesgue space. We define the Sobolev space
\EQ{
	\dot W^{s,p}(\R^d) := \fbrkb{f\in\Sch'(\R^d): \norm{f}_{\dot{W}^{s,p}(\R^d)}:= \norm{\abs{\nabla}^{s}f}_{L^p(\R^d)}<+\I}.
}
We denote that $\dot{H}^s(\R^d):=\dot{W}^{s,2}(\R^d)$. The inhomogeneous spaces are defined by 
\EQ{
W^{s,p}(\R^d)=\dot W^{s,p} \cap L^p(\R^d)\text{, and }H^{s}(\R^d)=\dot H^{s} \cap L^2(\R^d).
}
We often use the abbreviations $H^s=H^s(\R^d)$ and $L^p=L^p(\R^d)$. We also define $\jb{\cdot,\cdot}$ as real $L^2$ inner product:
\EQ{
	\jb{f,g} = \re\int f(x)\wb{g}(x)\dx.
}

For any $1\loe p <\I$, define $l_N^p=l_{N\in 2^\N}^p$ by its norm
\EQ{
\norm{c_N}_{l_{N\in 2^\N}^p}^p:=\sum_{N\in 2^\N}|c_N|^p.
}
The space $l_j^p=l_{j\in\N}^p$ is defined in a similar way. 

We then define the mixed norms: for $1\loe q< \I$, $1\loe r\loe \I$, and the function $u(t,x)$, we define
\EQ{
\norm{u}_{L_t^q L_x^r(\R\times \R^d)}^q:= \int_{\R}\norm{u(t,\cdot)}_{L_x^r}^q\dt,
}
and for the function $u_N(x)$, we define
\EQ{
\norm{u_N}_{l_N^q L_x^r(2^\N\times \R^d)}^q:= \sum_{N}\norm{u_N(\cdot)}_{L_x^r}^q.
}
The $q=\I$ case can be defined similarly.

For any $0\loe\gamma\loe1$, we call that the exponent pair $(q,r)\in\R^2$ is $\dot H^\ga$-$admissible$, if $\frac{2}{q}+\frac{d}{r}=\half d-\ga$, $2\loe q\loe\I$, $2\loe r\loe\I$, and $(q,r,d)\ne(2,\I,2)$. If $\ga=0$, we say that $(q,r)$ is $L^2$-$admissible$.

\subsection{Useful lemmas}
In this subsection, we gather some useful results.
\begin{lem}[Hardy's inequality]\label{lem:hardy}
	For $0<s<d/2$, we have that
	\EQ{
		\normb{|x|^{-s}u}_{L_x^2(\R^d)}\lsm \norm{ u}_{H_x^s(\R^d)}.
	}
\end{lem}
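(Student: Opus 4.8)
The plan is to first reduce the inhomogeneous estimate to its homogeneous analogue, and then deduce the homogeneous estimate from the Stein--Weiss weighted convolution inequality. Since $\jb{\xi}^{2s}\sim 1+\abs{\xi}^{2s}$, Plancherel's theorem gives $\norm{u}_{H^s_x(\R^d)}^2\sim\norm{u}_{L^2_x}^2+\norm{\abs{\nabla}^su}_{L^2_x}^2\goe\norm{\abs{\nabla}^su}_{L^2_x}^2$, so it suffices to prove
\[
\normb{\abs{x}^{-s}u}_{L^2_x(\R^d)}\lsm\normb{\abs{\nabla}^su}_{L^2_x(\R^d)}.
\]
By a density argument it is enough to establish this for $u\in C_0^\I(\R^d)$ and then pass to the limit: if $u_n\to u$ in $H^s$ with $u_n\in C_0^\I$, then a subsequence converges a.e.\ (from $L^2$-convergence), and Fatou's lemma upgrades the bound to general $u\in H^s_x$.

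For $u\in C_0^\I(\R^d)$ and $0<s<d$ one has the Riesz potential representation $u=\abs{\nabla}^{-s}\brko{\abs{\nabla}^su}=c_{d,s}\,\abs{x}^{s-d}\ast\brko{\abs{\nabla}^su}$, i.e.\ $u=I_s g$ with $g:=\abs{\nabla}^su\in L^2_x$. Now apply the classical Stein--Weiss inequality
\[
\normbb{\,\abs{x}^{-\beta}\int_{\R^d}\frac{h(y)}{\abs{x-y}^{\lambda}\abs{y}^{\alpha}}\dy\,}_{L^q_x(\R^d)}\lsm\norm{h}_{L^p_x(\R^d)},
\]
valid whenever $0<\lambda<d$, $1<p\loe q<\I$, $\alpha+\beta\goe0$, $\alpha<d/p'$, $\beta<d/q$ and $\frac1q=\frac1p+\frac{\lambda+\alpha+\beta}{d}-1$, with the choice $p=q=2$, $\alpha=0$, $\beta=s$, $\lambda=d-s$, $h=g$. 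All the hypotheses hold: $0<d-s<d$ since $0<s<d$; $\alpha+\beta=s\goe0$; $\alpha=0<d/2$; the scaling relation reads $\frac12=\frac12+\frac{(d-s)+0+s}{d}-1=\frac12$; and, crucially, $\beta=s<d/2=d/q$, which is exactly the standing hypothesis $s<d/2$. Hence $\normb{\abs{x}^{-s}u}_{L^2_x}=c_{d,s}\normb{\abs{x}^{-s}I_s g}_{L^2_x}\lsm\norm{g}_{L^2_x}=\normb{\abs{\nabla}^su}_{L^2_x}$, and combined with the reduction above this proves the lemma.

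The only nontrivial ingredient is the Stein--Weiss inequality itself; granting it, the proof is merely a bookkeeping of exponents, and the sole place where the range restriction $0<s<d/2$ enters is the condition $\beta<d/q$, which is precisely what keeps the weight $\abs{x}^{-2s}$ locally integrable (otherwise the left-hand side is already infinite on a bump function). If one prefers a self-contained argument, the main obstacle is to prove Stein--Weiss (equivalently, Pitt's inequality in the $L^2$ case), which can be done by a dyadic decomposition of the convolution in both the $x$-- and $y$--annuli followed by summation of a geometric series, or via the Mellin transform reducing matters to a one-dimensional Hardy inequality. Alternatively, for the frequently used subrange $0<s\loe1$ with $d\goe3$, one may instead give the elementary computation $\int\abs{x}^{-2}\abs{u}^2\dx=\frac{1}{d-2}\int\mathrm{div}\brko{x\abs{x}^{-2}}\abs{u}^2\dx=-\frac{2}{d-2}\re\int\abs{x}^{-2}(x\cdot\nabla u)\wb u\dx$, apply Cauchy--Schwarz to get the $s=1$ case, and then interpolate (complex interpolation of the weighted $L^2$ and $\dot H^s$ scales) between $s=0$ and $s=1$.
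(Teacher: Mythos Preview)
Your argument is correct: the reduction to the homogeneous estimate is immediate, and the application of Stein--Weiss with $p=q=2$, $\alpha=0$, $\beta=s$, $\lambda=d-s$ is precisely the standard route to the fractional Hardy inequality, with the constraint $s<d/2$ entering exactly where you identify it. The density/Fatou step and the alternative sketch for $0<s\loe 1$ are also fine.

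The paper itself does not prove this lemma; it is simply stated as a classical fact in the preliminaries section, alongside Gagliardo--Nirenberg and Strichartz, with no argument given. So there is nothing to compare against---your proposal supplies a complete proof where the paper gives none.
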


\begin{lem}[Gagliardo-Nirenberg inequality]\label{lem:GN}
	Let $d\goe 1$, $0<\th<1$, $0<s_1<s_2$ and $1<p_1,p_2,p_3\loe\I$. Suppose that $\rev{p_1}= \frac\th{p_2}+\frac{1-\th}{p_3}$ and $s_1=\th s_2$. Then, we have
	\EQn{\label{eq:GN}
		\norm{\abs{\nabla}^{s_1}u}_{L_x^{p_1}(\R^d)} \lsm \norm{\abs{\nabla}^{s_2}u}_{L_x^{p_2}(\R^d)}^\th \norm{u}_{L_x^{p_3}(\R^d)}^{1-\th}.
	}
\end{lem}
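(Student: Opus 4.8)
\textbf{Proof plan for the Gagliardo--Nirenberg inequality (Lemma \ref{lem:GN}).}

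The natural approach is Littlewood--Paley decomposition together with Bernstein's inequality. First I would write $u=\sum_{N}P_N u$ (including the low-frequency piece $P_{\loe 1}u$, or more precisely work with a homogeneous Littlewood--Paley decomposition since the estimate is scaling invariant under the hypothesis $s_1=\th s_2$), and then estimate $\norm{\abs{\nabla}^{s_1}u}_{L_x^{p_1}}$ by summing the contributions $\norm{\abs{\nabla}^{s_1}P_N u}_{L_x^{p_1}}\sim N^{s_1}\norm{P_N u}_{L_x^{p_1}}$. For each dyadic block I would interpolate the two available bounds: on one hand Bernstein gives $\norm{P_N u}_{L_x^{p_1}}\lsm N^{s_2 + d(\frac1{p_2}-\frac1{p_1})}\norm{\abs{\nabla}^{s_2}u}_{L_x^{p_2}}$ when $p_1\goe p_2$ (and a similar statement comparing $p_1$ with $p_3$), and on the other $\norm{P_N u}_{L_x^{p_1}}\lsm N^{d(\frac1{p_3}-\frac1{p_1})}\norm{u}_{L_x^{p_3}}$. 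Taking the geometric mean of these with weights $\th$ and $1-\th$, and using the hypotheses $\frac1{p_1}=\frac\th{p_2}+\frac{1-\th}{p_3}$ and $s_1=\th s_2$, the powers of $N$ in the exponent should cancel exactly against the $N^{s_1}$ factor, leaving a bound $N^{0}$ per block — which is of course not yet summable.

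To get a summable series I would not use the crude per-block bound but instead play the high and low frequency regimes against each other: for $N\le M_0$ use the bound dominated by $\norm{u}_{L^{p_3}}$ with a positive power of $N$ (after the cancellation is arranged so that one regime carries $N^{+\ep}$ and the other $N^{-\ep}$), and for $N>M_0$ use the bound dominated by $\norm{\abs{\nabla}^{s_2}u}_{L^{p_2}}$ with a negative power of $N$; both geometric series converge, and optimizing the cutoff $M_0$ (or just summing and using $\min$) produces the product $\norm{\abs{\nabla}^{s_2}u}_{L^{p_2}}^{\th}\norm{u}_{L^{p_3}}^{1-\th}$. Concretely one writes, for each $N$,
\EQ{
N^{s_1}\norm{P_N u}_{L_x^{p_1}} \lsm \min\fbrkb{ N^{s_1 - d(\frac1{p_3}-\frac1{p_1})}\cdot N^{d(\frac1{p_3}-\frac1{p_1})}\norm{u}_{L_x^{p_3}},\ N^{s_1 - s_2 - d(\frac1{p_2}-\frac1{p_1})}\cdot N^{s_2+d(\frac1{p_2}-\frac1{p_1})}\norm{\abs{\nabla}^{s_2}u}_{L_x^{p_2}} },
}
and then sums the geometric series on each side of the balance point. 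An alternative, perhaps cleaner, route is to avoid the explicit dyadic sum entirely: interpolate $\dot W^{s_1,p_1}$ between $\dot W^{s_2,p_2}$ and $L^{p_3}$ directly via complex interpolation of Sobolev spaces (the identity $[\dot W^{s_2,p_2},L^{p_3}]_{\th}=\dot W^{s_1,p_1}$ holds precisely under the stated relations), but since the paper works at the level of Littlewood--Paley theory throughout, the dyadic argument is more self-contained.

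The main obstacle is bookkeeping rather than conceptual: one must verify that the exponent arithmetic works out so that the two per-block bounds genuinely have opposite-sign powers of $N$ (this uses $0<s_1<s_2$ and the strict inequalities on the $p_i$, which guarantee the Bernstein steps go in the admissible direction $p_1\goe p_j$ or are handled by embedding when $p_1<p_j$), and one must treat the endpoint cases $p_2=\I$ or $p_3=\I$ and the low-frequency block with a little care. A subtle point worth stating explicitly is that when $p_1 < p_2$ the naive Bernstein inequality goes the wrong way; there one instead uses that on a fixed dyadic annulus $L^{p_2}\hookrightarrow L^{p_1}$ fails, so one should first reduce to the case $p_1\goe p_2$ and $p_1\goe p_3$ by noting $\frac1{p_1}$ is a convex combination of $\frac1{p_2}$ and $\frac1{p_3}$, hence $p_1$ lies between them, and the direction of all the Bernstein estimates used is consistent. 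Once the exponents are pinned down the summation is immediate and gives the claim.
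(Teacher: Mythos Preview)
The paper does not prove this lemma; it is stated without proof in the preliminaries as a standard result, so there is no argument of the paper's to compare against. That said, your main approach has a genuine gap.

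You attempt to bound each dyadic piece $N^{s_1}\norm{P_N u}_{L^{p_1}}$ by either $C(N)\norm{|\nabla|^{s_2}u}_{L^{p_2}}$ or $C(N)\norm{u}_{L^{p_3}}$ via Bernstein, and then sum. At the end you claim one can ``reduce to the case $p_1\goe p_2$ and $p_1\goe p_3$'' because $\frac1{p_1}$ is a convex combination of $\frac1{p_2}$ and $\frac1{p_3}$; but this is exactly backwards --- the convex combination forces $p_1$ to lie \emph{between} $p_2$ and $p_3$, so at most one of the two Bernstein steps goes in the admissible direction. In the regime $p_2\loe p_1\loe p_3$, for instance, there is simply no estimate of the form $\norm{P_N u}_{L^{p_1}}\lsm C(N)\norm{u}_{L^{p_3}}$: on a fixed frequency annulus one can place arbitrarily many well-separated bumps and make $\norm{u}_{L^{p_1}}/\norm{u}_{L^{p_3}}$ arbitrarily large. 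Even the Bernstein step that is available need not produce a summable exponent; a short computation gives $s_1-s_2+d(\tfrac1{p_2}-\tfrac1{p_1})=(1-\th)\bigl(\tfrac d{p_2}-\tfrac d{p_3}-s_2\bigr)$, which has no definite sign without the extra hypothesis $s_2>d(\tfrac1{p_2}-\tfrac1{p_3})$.

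The correct Littlewood--Paley proof does not pass through per-block Bernstein at all: write pointwise $N^{s_1}|P_N u|=(N^{s_2}|P_N u|)^\th\,|P_N u|^{1-\th}$, apply H\"older first in $\ell^2_N$ and then in $L_x^{p_1}$ with conjugate exponents $(p_2/\th,\,p_3/(1-\th))$, and invoke the square-function characterization of $\dot W^{s,p}$ (Lemma \ref{lem:littlewood-paley}) on each side. This works for $1<p_i<\I$; the endpoint $p_i=\I$ needs a separate argument. Your alternative via complex interpolation $[\dot W^{s_2,p_2},L^{p_3}]_\th=\dot W^{s_1,p_1}$ is also a valid and clean route.
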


Then by Lemma \ref{lem:GN}, we can easily obtain
\begin{lem}\label{lem:GN-2}
	Let $d\goe 1$, $0<\th<1$, $0<s_1<s_2$ and $1<p_1,p_2,p_3\loe\I$. Suppose that $\rev{p_1}= \frac\th{p_2}+\frac{1-\th}{p_3}$ and $s_1\loe\th s_2$. Then, we have
	\EQn{\label{eq:GN-2}
		\norm{\jb{\nabla}^{s_1}u}_{L_x^{p_1}(\R^d)} \lsm \norm{\jb{\nabla}^{s_2}u}_{L_x^{p_2}(\R^d)}^\th \norm{u}_{L_x^{p_3}(\R^d)}^{1-\th}.
	}
\end{lem}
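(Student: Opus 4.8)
The statement to prove is Lemma~\ref{lem:GN-2}, which upgrades the homogeneous Gagliardo--Nirenberg inequality of Lemma~\ref{lem:GN} to an inhomogeneous version with the relaxed exponent condition $s_1 \loe \th s_2$ (rather than equality).

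\textbf{Plan of proof.} The plan is to reduce everything to Lemma~\ref{lem:GN} via a Littlewood--Paley decomposition in the frequency variable, separating low and high frequencies. First I would write $u = P_{\loe 1}u + P_{\goe 1}u$ and treat the two pieces separately, since $\jb{\nabla}^{s_1}$ behaves like a bounded multiplier on low frequencies and like $\abs{\nabla}^{s_1}$ on high frequencies. For the low-frequency part, I would use that $\jb{\xi}^{s_1} \sim 1$ on $\{|\xi|\loe 1\}$ together with the boundedness of $P_{\loe 1}$ on $L^{p_1}$ (Mikhlin multiplier theorem, valid since $1<p_1<\I$; the endpoint cases $p_i=\I$ require a small separate interpolation/embedding argument) and then Hölder's inequality in the form $\norm{P_{\loe 1}u}_{L^{p_1}} \lsm \norm{u}_{L^{p_2}}^{\th}\norm{u}_{L^{p_3}}^{1-\th}$, which follows from $\frac{1}{p_1} = \frac{\th}{p_2} + \frac{1-\th}{p_3}$ after controlling $P_{\loe 1}$ on each Lebesgue space. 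For the high-frequency part, on $\{|\xi|\goe 1\}$ one has $\jb{\xi}^{s_1} \lsm |\xi|^{s_2'}$ for any $s_2' \goe s_1$; choosing $s_2' = \th s_2 \goe s_1$ and applying Lemma~\ref{lem:GN} with exponents $s_1 \rightsquigarrow \th s_2$, $s_2$, $\th$, $p_1,p_2,p_3$ (noting $\th s_2 = \th \cdot s_2$ holds with equality so Lemma~\ref{lem:GN} applies), together with $\norm{\abs{\nabla}^{\th s_2}P_{\goe 1}u}_{L^{p_1}} \lsm \norm{\jb{\nabla}^{s_1}P_{\goe 1}u}_{L^{p_1}}$ is not quite the right direction — so instead I would apply Lemma~\ref{lem:GN} directly to $\abs{\nabla}^{\th s_2}(P_{\goe 1}u)$ and then bound $\norm{\abs{\nabla}^{s_1}P_{\goe 1}u}_{L^{p_1}} \lsm \norm{\abs{\nabla}^{\th s_2}P_{\goe 1}u}_{L^{p_1}}$ using that $|\xi|^{s_1}\lsm|\xi|^{\th s_2}$ for $|\xi|\goe 1$ (a bounded-multiplier estimate).

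\textbf{Organizing the estimate.} More cleanly: split $u = u_{\loe 1} + u_{\goe 1}$. For $u_{\loe 1}$, bound $\norm{\jb{\nabla}^{s_1}u_{\loe 1}}_{L^{p_1}} \lsm \norm{u_{\loe 1}}_{L^{p_1}} \lsm \norm{u}_{L^{p_2}}^{\th}\norm{u}_{L^{p_3}}^{1-\th}$, where the last step is Hölder plus the $L^p$-boundedness of $P_{\loe 1}$ applied to both factors after writing $u_{\loe 1} = P_{\loe 1}(u) $ and interpolating; actually the cleanest is $\norm{u_{\loe 1}}_{L^{p_1}} \lsm \norm{u_{\loe 1}}_{L^{p_2}}^{\th}\norm{u_{\loe 1}}_{L^{p_3}}^{1-\th} \lsm \norm{u}_{L^{p_2}}^{\th}\norm{u}_{L^{p_3}}^{1-\th}$ where the first inequality is log-convexity of $L^p$ norms (pure Hölder, using $\frac{1}{p_1} = \frac{\th}{p_2}+\frac{1-\th}{p_3}$) and the second is $L^p$-boundedness of $P_{\loe 1}$. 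For $u_{\goe 1}$: since $\jb{\xi}^{s_1}\lsm |\xi|^{\th s_2}$ on the support of $\wh{u_{\goe 1}}$ (as $s_1 \loe \th s_2$ and $|\xi|\goe 1$), the operator $\jb{\nabla}^{s_1}\abs{\nabla}^{-\th s_2}P_{\goe 1}$ is an $L^{p_1}$-bounded Fourier multiplier, hence $\norm{\jb{\nabla}^{s_1}u_{\goe 1}}_{L^{p_1}} \lsm \norm{\abs{\nabla}^{\th s_2}u_{\goe 1}}_{L^{p_1}}$, and then Lemma~\ref{lem:GN} (with $s_1$ there equal to $\th s_2$, $s_2$ there equal to $s_2$) gives $\norm{\abs{\nabla}^{\th s_2}u_{\goe 1}}_{L^{p_1}} \lsm \norm{\abs{\nabla}^{s_2}u_{\goe 1}}_{L^{p_2}}^{\th}\norm{u_{\goe 1}}_{L^{p_3}}^{1-\th} \lsm \norm{\abs{\nabla}^{s_2}u}_{L^{p_2}}^{\th}\norm{u}_{L^{p_3}}^{1-\th} \lsm \norm{\jb{\nabla}^{s_2}u}_{L^{p_2}}^{\th}\norm{u}_{L^{p_3}}^{1-\th}$, where the last steps again use $L^{p}$-boundedness of the relevant projections/multipliers. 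Summing the two contributions and using $a^\th b^{1-\th} + c^\th d^{1-\th} \lsm (a+c)^\th(b+d)^{1-\th}$ closes the argument.

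\textbf{Main obstacle.} The genuinely delicate point is the endpoint cases where one or more of $p_1,p_2,p_3$ equals $\I$, since the Mikhlin--Hörmander multiplier theorem (used for $L^p$-boundedness of $P_{\loe 1}$, $P_{\goe 1}$, and $\jb{\nabla}^{s_1}\abs{\nabla}^{-\th s_2}P_{\goe 1}$) fails at $p=\I$; there one must instead invoke that convolution with an $L^1$ kernel (the inverse Fourier transform of the smooth compactly supported, or Bessel-type, symbol) is bounded on $L^\I$, which requires checking the relevant kernels lie in $L^1$ — for $P_{\loe 1}$ this is immediate, for $\jb{\nabla}^{s_1}\abs{\nabla}^{-\th s_2}P_{\goe 1}$ one uses that the symbol is smooth, equals $0$ near the origin, and decays suitably at infinity so its kernel is Schwartz-type and integrable. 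Everything else is routine Hölder and frequency-localization bookkeeping; I expect no further difficulty, and the whole proof should be short.
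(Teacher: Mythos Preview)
Your proposal is correct and follows the natural route: the paper itself gives no proof beyond the remark ``Then by Lemma~\ref{lem:GN}, we can easily obtain'' Lemma~\ref{lem:GN-2}, and your low/high-frequency splitting combined with H\"older and Lemma~\ref{lem:GN} is exactly the kind of routine reduction the authors have in mind. Your identification of the $p_i=\infty$ endpoint as the only delicate spot is accurate, and the kernel-in-$L^1$ fix you sketch is the standard remedy.
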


\begin{lem}[Strichartz estimate, \cite{KT98AJM}]\label{lem:strichartz}
	Let $I\subset \R$. Suppose that $(q,r)$ and $(\wt{q},\wt{r})$ are $L_x^2$-admissible.
	Then,
	\EQn{\label{eq:strichartz-1}
		\norm{ e^{it\De}\ph}_{L_t^qL_x^r(\R\times\R^d)} \lsm \norm{\ph}_{L_x^2},
	}
	and
	\EQn{\label{eq:strichartz-2}
		\normb{\int_0^t e^{i(t-s)\De} F(s)\ds}_{L_t^qL_x^r(\R\times \R^d)} \lsm \norm{F}_{L_t^{\wt{q}'} L_x^{\wt{r}'}(\R\times\R^d)}.
	}
\end{lem}

\begin{lem}[Littlewood-Paley estimates]\label{lem:littlewood-paley}
	Let $1<p<\I$ and $f\in L_x^p(\R^d)$. Then, we have
	\EQ{
		\norm{f_N}_{L_x^p l_{N\in2^\N}^2}\sim_p \norm{f}_{L_x^p}.
	}
\end{lem}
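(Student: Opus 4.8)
The final statement to prove is the Littlewood–Paley inequality, Lemma \ref{lem:littlewood-paley}: for $1<p<\infty$ and $f\in L^p_x(\R^d)$, one has $\|f_N\|_{L^p_x\ell^2_{N\in 2^\N}}\sim_p \|f\|_{L^p_x}$, where $f_N=P_Nf$ is the inhomogeneous Littlewood–Paley projection (including the low-frequency piece $P_{\le 1}f$).

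The plan is to deduce this from the Khintchine inequality together with the Mikhlin–Hörmander multiplier theorem (or, equivalently, standard vector-valued Calderón–Zygmund theory). First I would introduce a Rademacher sequence $\{r_N(\omega)\}_{N\in 2^\N}$ on an auxiliary probability space, and for each fixed $\omega$ consider the multiplier operator $T_\omega f := \sum_{N} r_N(\omega) P_N f$, whose symbol is $m_\omega(\xi)=\sum_N r_N(\omega)\phi_N(|\xi|)$ (with the convention that the $N=1$ term carries $\phi_{\le 1}$). Because the $\phi_N$ have finite overlap and each enjoys the bounds $|\partial^\alpha \phi_N(|\xi|)|\lesssim_\alpha |\xi|^{-|\alpha|}$ uniformly in $N$, the symbol $m_\omega$ satisfies the Mikhlin condition $|\partial^\alpha m_\omega(\xi)|\lesssim_\alpha |\xi|^{-|\alpha|}$ with a constant independent of $\omega$. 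Hence $\|T_\omega f\|_{L^p_x}\lesssim_p \|f\|_{L^p_x}$ uniformly in $\omega$. Raising to the $p$-th power, integrating in $\omega$, interchanging the $\omega$- and $x$-integrals by Fubini, and applying Khintchine's inequality pointwise in $x$ converts $\int \|T_\omega f\|_{L^p_x}^p\,d\omega$ into $\big\|(\sum_N |P_N f|^2)^{1/2}\big\|_{L^p_x}^p$ up to $p$-dependent constants, yielding the bound $\|f_N\|_{L^p_x\ell^2_N}\lesssim_p \|f\|_{L^p_x}$.

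For the reverse inequality I would use a duality argument. Pick a fattened projection $\tilde P_N$ (a Littlewood–Paley piece with slightly wider support so that $\tilde P_N P_N = P_N$), so that $\langle f, g\rangle = \sum_N \langle P_N f, \tilde P_N g\rangle$ for $f\in L^p$, $g\in L^{p'}$. By Cauchy–Schwarz in $N$ and then Hölder in $x$, this is bounded by $\|(\sum_N|P_N f|^2)^{1/2}\|_{L^p_x}\,\|(\sum_N|\tilde P_N g|^2)^{1/2}\|_{L^{p'}_x}$; applying the already-proved forward inequality (with exponent $p'$, since $1<p'<\infty$) to the second factor gives $\lesssim_p \|f_N\|_{L^p_x\ell^2_N}\|g\|_{L^{p'}_x}$, and taking the supremum over $\|g\|_{L^{p'}_x}\le 1$ recovers $\|f\|_{L^p_x}\lesssim_p \|f_N\|_{L^p_x\ell^2_N}$.

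The only genuinely delicate point is the uniform-in-$\omega$ Mikhlin bound for the random multiplier $m_\omega$, i.e.\ checking that the derivative estimates for $\sum_N r_N(\omega)\phi_N(|\xi|)$ hold with a constant depending only on the dimension and finitely many derivatives of the single bump $\phi$ — this uses that at any $\xi$ only $O(1)$ of the $\phi_N$ are nonzero and each contributes $|\xi|^{-|\alpha|}$-type bounds; everything else (Khintchine, Fubini, duality with the fattened projection) is routine. One should also note the minor bookkeeping that the lowest block is inhomogeneous: $\phi_{\le 1}$ is a smooth compactly supported function near the origin, which only helps the Mikhlin estimate there. Alternatively, if one prefers to quote a black-box, the whole statement is a standard consequence of vector-valued Calderón–Zygmund theory applied to the $\ell^2$-valued convolution kernel $(K_N)_N$, $\hat K_N=\phi_N$, and may simply be cited; I would present the Khintchine-based proof since it is self-contained given the Mikhlin theorem.
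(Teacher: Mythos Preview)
Your proof is correct and follows one of the standard routes to the Littlewood--Paley inequality: randomize the signs of the projections, invoke the Mikhlin--H\"ormander multiplier theorem uniformly in the randomization, and use Khintchine to extract the square function; then dualize for the reverse inequality. All the steps you outline are sound, and the one point you flag as delicate (the uniform Mikhlin bound for $m_\omega$) is indeed the heart of the matter and is handled exactly as you describe, via the finite overlap of the $\phi_N$.

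However, the paper does not actually prove this lemma at all: it is stated in the preliminary section as a classical fact and used without further comment (Stein's harmonic analysis text \cite{Stein93book} is in the bibliography, which is the standard reference). So there is no ``paper's own proof'' to compare against; your write-up supplies what the paper simply quotes. If you want to match the paper's treatment, a one-line citation suffices; if you want to be self-contained, what you have is fine.
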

Next, we give some properties of $\bx_j$.
\begin{lem}[Orthogonality]\label{lem:orthogonality}
	Let $f\in L_x^2(\R^d)$. Then, we have
	\EQ{
		\norm{\bx_jf}_{L_x^2 l_{j\in\N}^2}\sim \norm{f}_{L_x^2}.
	}
\end{lem}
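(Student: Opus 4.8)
The plan is to compute the $L^2_x l^2_{j\in\N}$ norm on the Fourier side and exploit the bounded overlap of the multipliers $\psi_j$. First I would apply Plancherel's theorem in $x$ to rewrite
\EQ{
\norm{\bx_j f}_{L_x^2 l_{j\in\N}^2}^2 = \sum_{j\in\N} \norm{\psi_j \wh f}_{L_\xi^2}^2 = \int_{\R^d} \Brkb{\sum_{j\in\N} \psi_j(\xi)^2} |\wh f(\xi)|^2 \d\xi.
}
So the lemma reduces to showing $\sum_{j\in\N}\psi_j(\xi)^2 \sim 1$ for a.e. $\xi$. The upper bound is immediate from the partition of unity property $\sum_j \psi_j(\xi) = 1$ together with $0\loe\psi_j\loe 1$, which gives $\sum_j \psi_j(\xi)^2 \loe \sum_j \psi_j(\xi) = 1$. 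For the lower bound, the key structural fact is that the cubes $\{2Q_j\}$ have \emph{uniformly bounded overlap}: each point $\xi$ lies in at most $K=K(d)$ of the doubled cubes $2Q_j$, because within a fixed dyadic annulus $Q_N$ the sub-cubes $Q\in\A(Q_N)$ are essentially disjoint of common sidelength $N^{-a}$ (so their doublings overlap boundedly, uniformly in $N$), and only $O(1)$ distinct annuli $Q_N$ can be relevant near any given $\xi$ once one accounts for the doubling. Consequently at most $K$ of the $\psi_{j'}$ are nonzero at $\xi$, so in the definition $\psi_j = \wt\psi_j / \sum_{j'}\wt\psi_{j'}$ the denominator is at most $K$; combining with the fact that for each $\xi$ there is at least one index $j$ with $\wt\psi_j(\xi)=1$ (namely the $Q_j$ actually containing $\xi$), one gets $\psi_j(\xi) \goe 1/K$ for that index, hence $\sum_{j'}\psi_{j'}(\xi)^2 \goe 1/K^2$. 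This yields $\norm{\bx_j f}_{L_x^2 l_{j\in\N}^2}^2 \sim \norm{\wh f}_{L_\xi^2}^2 \sim \norm{f}_{L_x^2}^2$ by Plancherel again.

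I expect the only genuine point requiring care to be the \emph{bounded overlap claim} for $\{2Q_j\}_{j\in\N}$, i.e. verifying $\sup_\xi \sharp\{j: \xi\in 2Q_j\} <\I$ with a bound depending only on $d$. One must check both the intra-annulus overlap (doubling a cube of sidelength $N^{-a}$ sitting inside $Q_N\subset O_{2N}$: the doubled cubes still fit in a fixed dilate of $Q_N$ and have the same volume $N^{-da}$ up to constants, so a volume-counting argument caps the overlap by a constant independent of $N$) and the inter-annulus overlap (only finitely many $Q_N$ have $2Q_N$ — or rather the union of the doubled sub-cubes — reaching a given $\xi$, since $Q_N$ lives at frequency $\sim N$ and the sub-cubes near its boundary only spill over by $O(N^{-a})\ll N$). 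All of this is standard and purely geometric; no probability is used here (the Gaussians $g_j$ play no role in this deterministic statement). I would state the bounded-overlap fact as a short lemma or inline observation and then the rest is the two-line Plancherel computation above.
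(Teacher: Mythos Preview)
Your proposal is correct and is essentially the same approach as the paper's: both arguments pass to the Fourier side via Plancherel and exploit the uniformly bounded overlap of the supports $\{2Q_j\}$ (the paper phrases this as $\sharp\B_j\lsm1$ where $\B_j=\{j':\supp\psi_j\cap\supp\psi_{j'}\ne\emptyset\}$). The only cosmetic difference is that you reduce to the pointwise multiplier estimate $\sum_j\psi_j(\xi)^2\sim1$, whereas the paper expands $\|f\|_{L_x^2}^2=\|\sum_j\bx_jf\|_{L_x^2}^2$ as a double sum over $j,j'\in\B_j$ and compares it to the diagonal; these are equivalent computations, and your formulation is arguably cleaner.
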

\begin{proof}
Since $\Q$ is a set of essentially disjoint cubes, for any fixed $j\in \N$, let
$$
\B_j := \fbrk{j'\in\N:\supp \psi_j \cap \supp \psi_{j'}\ne\emptyset},
$$
then $\sharp \B_j\lsm 1$. We also have
\EQ{
j'\in \B_j \Leftrightarrow j\in\B_{j'}.
}
Then by Plancherel's identity,
\EQ{
\norm{f}_{L_x^2(\R^d)}^2= & \normb{\sum_{j\in \N}\bx_jf}_{L_x^2(\R^d)}^2 \\
= & \int_{\R^d} |\sum_{j\in \N} \psi_j(\xi)\wh f(\xi)|^2 \mathrm d\xi \\
= & \sum_{j,j'\in \N:j'\in \B_j} \int_{\R^d} \psi_j(\xi) \psi_{j'}(\xi) | \wh f(\xi)|^2 \mathrm d\xi.
}
Hence, on one hand, 
\EQ{
\norm{\bx_jf}_{L_x^2 l_{j\in \N}^2} = & \sum_{j\in \N} \int_{\R^d} | \psi_j(\xi) \wh f(\xi)|^2 \mathrm d\xi \\
\loe & \sum_{j,j'\in \N:j'\in \B_j} \int_{\R^d} \psi_j(\xi) \psi_{j'}(\xi) | \wh f(\xi)|^2 \mathrm d\xi = \norm{f}_{L_x^2(\R^d)}^2.
}
On the other hand, by Cauchy-Schwartz's inequality,
\EQ{
\norm{f}_{L_x^2(\R^d)}^2= & \sum_{j,j'\in \N:j'\in \B_j} \int_{\R^d} \psi_j(\xi) \psi_{j'}(\xi) | \wh f(\xi)|^2 \mathrm d\xi \\
\lsm & \sum_{j,j'\in \N:j'\in \B_j} \brkb{\int_{\R^d} |\psi_j(\xi) \wh f(\xi)|^2 \mathrm d\xi +  \int_{\R^d} |\psi_{j'}(\xi) \wh f(\xi)|^2 \mathrm d\xi} \\
\lsm & \sum_{j,j'\in \N:j'\in \B_j} \int_{\R^d} |\psi_j(\xi) \wh f(\xi)|^2 \mathrm d\xi \lsm \norm{\bx_jf}_{L_x^2 l_{j\in\N}^2}.
}
This finishes the proof.
\end{proof}

We then prove a Bernstein-like estimate for $\bx_j$. This provides the required smoothing effect, as long as the scale of cube $N^{-a}$ is suitably small.
\begin{lem}[$L^q$-$L^p$ estimate]\label{lem:bernstein}
Let $a>0$ and $2\loe p\loe q \loe\I$. Given any $j\in\N$, then
\EQ{
\norm{\bx_j f}_{L_x^q(\R^d)}\lsm \normb{\jb{\nabla}^{-a(\frac dp-\frac dq)}\bx_j  f}_{L_x^p(\R^d)}.
}
\end{lem}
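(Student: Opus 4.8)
The plan is to reduce the claim to a single-cube Bernstein estimate by exploiting that $\bx_j$ is localized (up to tails) to a cube $Q_j$ of sidelength $\ell_j$, where $\ell_j = 1$ for the initial cube $O_1$ and $\ell_j = N^{-a}$ for a cube sitting in $Q_N$. The point is that the Fourier multiplier $\psi_j$ is supported in $2Q_j$, a cube of sidelength $2\ell_j$, so $\bx_j f$ has Fourier support of diameter $\sim \ell_j$. Writing $\bx_j f = \F^{-1}(\psi_j \wh f)$ and inserting a fattened cutoff $\chi_j$ (a smooth bump equal to $1$ on $2Q_j$, supported in $4Q_j$, say) we have $\bx_j f = \F^{-1}(\chi_j) \ast \bx_j f$, and the standard Bernstein inequality for a function with Fourier support in a cube of sidelength $\sim \ell_j$ gives
\EQ{
\norm{\bx_j f}_{L_x^q(\R^d)} \lsm \ell_j^{d(\frac1p-\frac1q)} \norm{\bx_j f}_{L_x^p(\R^d)}.
}
This is the elementary ingredient: it follows from Young's inequality $\norm{g \ast h}_{L^q} \le \norm{g}_{L^r}\norm{h}_{L^p}$ with $\frac1q + 1 = \frac1r + \frac1p$, after rescaling to unit cube one checks $\norm{\F^{-1}\chi_j}_{L^r} \sim \ell_j^{d(1-\frac1r)} = \ell_j^{d(\frac1p - \frac1q)}$ since $\F^{-1}\chi_j$ is an $L^1$-normalized bump at scale $\ell_j^{-1}$ in space, i.e.\ its $L^r$ norm scales as $(\ell_j^{-d})^{\frac1{r'}} = \ell_j^{d/r} \cdot \ell_j^{-d} $... (more carefully: $\F^{-1}\chi_j(x) = \ell_j^d (\F^{-1}\chi)(\ell_j x)$ for the model cube, so $\norm{\F^{-1}\chi_j}_{L^r} = \ell_j^{d - d/r}\norm{\F^{-1}\chi}_{L^r} \sim \ell_j^{d(\frac1p-\frac1q)}$).

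Next I would convert the factor $\ell_j^{d(\frac1p-\frac1q)}$ into the claimed $\jb{\nabla}^{-a(\frac dp - \frac dq)}$ on the right. Since $\bx_j f$ has Fourier support in $2Q_j \subset \{|\xi| \lsm \ell_j^{-a^{-1}}\}$... no — here one must be careful: for the initial cube $\ell_j = 1$ and $|\xi| \lsm 1$ so $\jb{\nabla}^{\sigma}$ is harmless; for a cube in $Q_N$ we have $\ell_j = N^{-a}$ and $|\xi| \sim N$, hence $\jb{\xi} \sim N = \ell_j^{-1/a}$, equivalently $\ell_j \sim \jb{\xi}^{-a}$ on the Fourier support of $\bx_j f$. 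Therefore $\ell_j^{d(\frac1p-\frac1q)} \sim \jb{\nabla}^{-a d(\frac1p - \frac1q)}$ acting on $\bx_j f$ in the sense that $\jb{\nabla}^{a d(\frac1p-\frac1q)}\bx_j f$ differs from $\ell_j^{-d(\frac1p-\frac1q)}\bx_j f$ by a Fourier multiplier that is smooth and bounded (with bounded derivatives, uniformly in $j$ after rescaling) on the support. Concretely I would write
\EQ{
\ell_j^{d(\frac1p-\frac1q)}\norm{\bx_j f}_{L_x^p} \sim \ell_j^{d(\frac1p-\frac1q)}\normb{ m_j(\nabla)\jb{\nabla}^{-a d(\frac1p-\frac1q)}\bx_j f}_{L_x^p} \lsm \normb{\jb{\nabla}^{-a(\frac dp - \frac dq)}\bx_j f}_{L_x^p},
}
where $m_j(\xi) = \ell_j^{-d(\frac1p-\frac1q)}\jb{\xi}^{a d(\frac1p-\frac1q)}\wt\chi_j(\xi)$ for a further fattened cutoff $\wt\chi_j$, and the $L^p$-boundedness of $m_j(\nabla)$ uniformly in $j$ follows from the Mikhlin–Hörmander multiplier theorem after the anisotropic rescaling $\xi \mapsto \ell_j^{-1}\xi$ (resp.\ centering at the cube center), using that on $\supp\wt\chi_j$ one has $\jb{\xi}\sim \ell_j^{-1/a}$ with comparable symbol bounds for all derivatives.

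The main obstacle I anticipate is the bookkeeping in this last multiplier step: one must check that the symbols $m_j$ satisfy Mikhlin bounds \emph{uniformly in $j$}, which requires the rescaling to be done relative to the cube's center and sidelength and requires the observation that $a \ge 10$ (so the exponent $a d(\frac1p - \frac1q)$ is a fixed nonnegative number, bounded by $a d/2$) keeps all constants uniform; the tails of $\psi_j$ (it is not compactly supported in $Q_j$, only in $2Q_j$, and is a quotient by $\sum_{j'}\wt\psi_{j'}$) need the bounded-overlap property $\sharp\B_j \lsm 1$ from Lemma~\ref{lem:orthogonality}'s proof to ensure $\psi_j$ and its rescaled derivatives are bounded uniformly. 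A cleaner alternative, which I would actually adopt to sidestep delicate multiplier estimates, is: pick a fixed auxiliary Schwartz function $\Phi$ with $\wh\Phi = 1$ on $[-4,4]^d$, set $\Phi_j(x) = \ell_j^d \Phi(\ell_j(x - \text{no, translation in frequency})) $ — i.e.\ build a reproducing kernel $K_j$ with $\wh{K_j} \equiv 1$ on $2Q_j$, so $\bx_j f = K_j \ast \bx_j f$, estimate $\norm{K_j \ast g}_{L^q} \le \norm{K_j}_{L^r}\norm{g}_{L^p}$ by Young, compute $\norm{K_j}_{L^r} \lsm \ell_j^{d(\frac1p-\frac1q)}$ by scaling, and then absorb $\ell_j^{d(\frac1p - \frac1q)} \lsm \jb{\nabla}^{-a(\frac dp - \frac dq)}$ against $\bx_j f$ using only that $\jb{\nabla}^{a(\frac dp-\frac dq)}\bx_j f$ has the same Fourier support and $\jb{\xi}^{a(\frac dp - \frac dq)} \sim \ell_j^{-(\frac dp - \frac dq)}$ there — the last comparison is an honest pointwise estimate on $\supp\psi_j$, not a multiplier bound, once one commutes $\jb{\nabla}^{a(\frac dp-\frac dq)}$ inside using a fattened cutoff with uniformly bounded rescaled derivatives. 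Either route, the whole argument is a handful of lines and the crux is the uniform-in-$j$ control, which the hypothesis $a \geq 10$ and the essential disjointness of $\Q$ supply.
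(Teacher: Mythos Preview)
Your proposal is correct but proceeds differently from the paper. The paper's proof is shorter: it applies the Hausdorff--Young inequality $\norm{\bx_j f}_{L^q} \loe \normo{\psi_j\wh f}_{L^{q'}}$, then H\"older in frequency against the characteristic function of $2Q_j$ (volume $\sim N^{-da}$) to land in $L^2_\xi$, giving $\norm{\bx_j f}_{L^q} \lsm N^{-ad(\frac12-\frac1q)}\norm{\bx_j f}_{L^2}$; interpolation with the trivial $L^q\to L^q$ bound then yields the general $2\loe p\loe q$. Your Young's-inequality route via an auxiliary reproducing kernel $K_j$ (built by translating, modulating, and dilating a fixed Schwartz bump so that $\wh{K_j}\equiv 1$ on $2Q_j$) is the standard Bernstein argument and in fact works for all $1\loe p\loe q\loe\I$ --- which is interesting, because the paper explicitly remarks after the proof that $p\goe 2$ is forced precisely because ``$\psi_j$ does not have [the rescaling] property'' and so Young's inequality is unavailable; your construction sidesteps this by not using $\F^{-1}\psi_j$ itself as the convolution kernel. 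Two minor points: the final conversion $N^{-s}\norm{\bx_jf}_{L^p}\lsm \normo{\jb{\nabla}^{-s}\bx_jf}_{L^p}$ is not merely a pointwise comparison on $\supp\psi_j$ but a genuine $L^p$-multiplier bound --- a standard one, however, since $\supp\psi_j\subset\{|\xi|\sim N\}$ lets you insert an annular Littlewood--Paley projection $\wt P_N$ and invoke $N^{-s}\norm{\wt P_N g}_{L^p}\sim\normo{\jb{\nabla}^{-s}\wt P_N g}_{L^p}$ (the paper also uses this step without comment); and neither $a\goe 10$ nor the essential disjointness of $\Q$ plays any role here, as this is a single-cube estimate and the lemma assumes only $a>0$.
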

\begin{proof}
By the property of $\Q$, given any $j\in\N$, there exists $N\in2^\N$ such that for all $\xi\in \supp\psi_j$,
\EQ{
	\jb{|\xi|}\sim N\text{, and }|\supp \psi_j| \sim N^{-da}.
}

Let $r\goe 2$ and $0\loe \th\loe 1$ such that $\frac 1r=\frac1{q'}-\frac12$ and $\frac1p=\frac{\th}{2} + \frac{1-\th}{q}$. Now, by Hausdorff-Young's and H\"older's inequalities,
\EQ{
\norm{\bx_j f}_{L_x^q(\R^d)} \loe & \normb{\psi_j \wh f}_{L_\xi^{q'}(\R^d)} \\
\lsm & \normb{\mathbbm{1}_{\xi\in 2Q_j}}_{L_\xi^r(\R^d)} \normb{\psi_j \wh f}_{L_x^2(\R^d)} \\
\lsm & N^{-a\cdot\frac dr} \norm{\bx_j f}_{L_x^2(\R^d)}.
}
Note that we have
\EQ{
\frac{1}{r}=\frac12-\frac1q\text{, and }\frac1p=\frac1q+\frac\th r.
}
Then, interpolating with the trivial estimate $\norm{\bx_j f}_{L_x^q(\R^d)}\lsm \norm{\bx_j f}_{L_x^q(\R^d)}$, for $2\loe p\loe q$,
\EQ{
\norm{\bx_j f}_{L_x^q(\R^d)} \lsm & N^{-a\cdot\frac dr\cdot \th} \norm{\bx_j f}_{L_x^p(\R^d)} \\
\lsm & N^{-a(\frac dp-\frac dq)} \norm{\bx_j f}_{L_x^p(\R^d)} \\
\lsm & \normb{\jb{\nabla}^{-a(\frac dp-\frac dq)} \bx_j f}_{L_x^p(\R^d)}.
}
This completes the proof of this lemma.
\end{proof}

We remark that this lemma is different from the usual Bernstein' inequality for the Littlewood-Paley projection operator $P_N$: for $1\loe p\loe q\loe \I$ and $N\in 2^{\N}$,
\EQ{
\norm{P_N f}_{L_x^q(\R^d)}\lsm N^{\frac dp-\frac dq}\norm{P_N f}_{L_x^p(\R^d)}.
}
In fact, the cut-off $\phi_N$ can be generated by rescaling, so this estimate follows by Young's inequality. However, $\psi_j$ does not have that property. Therefore, we need to use the Hausdoff-Young's inequality. That's the reason why we require $p\goe 2$.

\subsection{Probabilistic theory}
We recall the large deviation estimate, which holds for the random variable sequence $\fbrk{\re g_k,\im g_k}$ in the Definition \ref{defn:randomization}.
\begin{lem}[Large deviation estimate, \cite{BT08inventI}]\label{lem:large-deviation}
Let $(\Om, \A, \PP)$ be a probability space. Let $\fbrk{g_n}_{n\in\N^+}$ be a sequence of real-valued, independent, zero-mean random variables with associated distributions $\fbrk{\mu_n}_{n\in\N^+}$ on $\Om$. Suppose $\fbrk{\mu_n}_{n\in\N^+}$ satisfies that there exists $c>0$ such that for all $\ga\in\R$ and $n\in\N^+$
\EQ{
\absb{\int_{\R}e^{\ga x}\mathrm d \mu_n(x)}\loe e^{c\ga^2},
}
then there exists $\al>0$ such that for any $\la>0$ and any complex-valued sequence $\fbrk{c_n}_{n\in\N^+}\in l_n^2$, we have
\EQ{
\PP\brkb{\fbrkb{\om:\absb{\sum_{n=1}^\I c_n g_n(\om)}>\la}}\loe 2\exp\fbrkb{-\al\la\norm{c_n}_{l_n^2}^{-2}}.
}
Furthermore, there exists $C>0$ such that for any $2\loe p<\I$ and complex-valued sequence $\fbrk{c_n}_{n\in\N^+}\in l_n^2$, we have
\EQn{\label{eq:large-deviation}
\normb{\sum_{n=1}^\I c_n g_n(\om)}_{L_\om^p(\Om)} \loe C\sqrt{p} \norm{c_n}_{l_n^2}.
}
\end{lem}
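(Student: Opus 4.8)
The final statement to prove is the large deviation estimate, Lemma \ref{lem:large-deviation}. This is a classical result, so my plan would be to reconstruct the standard proof from the Burq--Tzvetkov exponential-moment hypothesis.

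\textbf{Plan.} First I would establish the exponential tail bound. Set $S_N(\om) = \sum_{n=1}^N c_n g_n(\om)$ and write $c_n = a_n + i b_n$; by splitting into real and imaginary parts it suffices to treat a real series $\sum a_n g_n$ with $g_n$ real, independent, zero-mean, satisfying $\abs{\int_\R e^{\ga x}\,d\mu_n(x)} \le e^{c\ga^2}$. For $t>0$, independence gives $\mathbb{E}\,e^{t\sum a_n g_n} = \prod_n \mathbb{E}\,e^{t a_n g_n} \le \prod_n e^{c t^2 a_n^2} = e^{c t^2 \norm{a_n}_{l^2}^2}$. Then Markov's inequality yields $\PP(\sum a_n g_n > \la) \le e^{-t\la} e^{c t^2 \norm{a_n}_{l^2}^2}$, and optimizing in $t$ (taking $t = \la/(2c\norm{a_n}_{l^2}^2)$) gives $\PP(\sum a_n g_n > \la) \le \exp\fbrkb{-\la^2/(4c\norm{a_n}_{l^2}^2)}$. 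Applying the same bound to $-\sum a_n g_n$, combining the real and imaginary parts, and absorbing constants produces the claimed bound $2\exp\fbrkb{-\al\la\norm{c_n}_{l_n^2}^{-2}}$ — though I note the statement as written has $\la$ rather than $\la^2$ in the exponent, which is the correct form only after a harmless rescaling or is simply a slightly lossy restatement; I would present the sharper $\la^2$ version and remark it implies the stated one for $\la$ bounded, or more likely just follow the reference's normalization.

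\textbf{From tails to $L^p_\om$ bounds.} For the moment estimate \eqref{eq:large-deviation}, I would use the layer-cake / distribution function formula: for $2 \le p < \I$,
\EQ{
\normb{\sum_n c_n g_n}_{L_\om^p}^p = p\int_0^\I \la^{p-1}\, \PP\brkb{\absb{\textstyle\sum_n c_n g_n} > \la}\,d\la \lsm p\int_0^\I \la^{p-1} e^{-\al \la^2 \norm{c_n}_{l^2}^{-2}}\,d\la.
}
Substituting $\la = \norm{c_n}_{l^2}\, \al^{-1/2}\, \sqrt\si$ converts the integral into $\norm{c_n}_{l^2}^p \al^{-p/2}\int_0^\I \si^{p/2-1}e^{-\si}\,d\si = \norm{c_n}_{l^2}^p \al^{-p/2}\,\Ga(p/2)$. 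Using $\Ga(p/2)^{1/p} \lsm \sqrt{p}$ (Stirling) and taking $p$-th roots gives $\normb{\sum_n c_n g_n}_{L_\om^p} \le C\sqrt{p}\,\norm{c_n}_{l^2}$, as desired. A passage to the limit $N\to\I$ via monotone convergence (the partial sums converge in $L^2_\om$ and hence a.s. along a subsequence, and Fatou) handles the infinite series.

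\textbf{Main obstacle.} There is no deep difficulty here — this is textbook probability. The only points needing a little care are: (i) handling complex-valued $c_n$ and complex Gaussians cleanly by reducing to the real scalar case without losing track of constants; (ii) the passage from finite partial sums to the full series, ensuring the random variable $\sum_n c_n g_n$ is well-defined in $L^p_\om$ and the tail/moment bounds pass to the limit (uniform-in-$N$ bounds plus Fatou suffice); and (iii) matching the precise form of the exponent in the reference's statement. I would expect to cite \cite{BT08inventI} for the result and include only the short Chernoff-bound computation for completeness.
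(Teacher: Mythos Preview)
Your proposal is correct and is precisely the standard Chernoff-bound argument from \cite{BT08inventI}. Note, however, that the paper does not actually prove this lemma: it is stated with attribution to \cite{BT08inventI} and used as a black box, so there is no ``paper's own proof'' to compare against. Your reconstruction is the intended one, and your observation about the exponent ($\la$ versus $\la^2$) is apt --- the $\la^2$ form is what the argument yields and what is used downstream (e.g.\ in Lemma~\ref{lem:probability-estimate} and Corollary~\ref{cor:Y-norm-Z-norm}), so the stated $\la$ is almost certainly a typographical slip.
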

The following lemma can be proved by the method in \cite{Tzv10gibbs}, see also \cite{DLM20AJM,DLM19Adv}.
\begin{lem}\label{lem:probability-estimate}
Let $F$ be a real-valued measurable function on a probability space $(\Om, \A, \PP)$. Suppose that there exists $C_0>0$, $K>0$ and $p_0\goe 1$ such that for any $p\goe p_0$, we have
\EQ{
\norm{F}_{L_\om^p(\Om)} \loe \sqrt{p} C_0 K.
}
Then, there exist $c>0$ and $C_1>0$, depending on $C_0$ and $p_0$ but independent of $K$, such that for any $\la>0$,
\EQ{
\PP\brkb{\fbrkb{\om\in\Om:|F(\om)|>\la}} \loe C_1 e^{-c\la^2K^{-2}}.
}
Particularly, we have
\EQ{
\PP\brkb{\fbrkb{\om\in\Om:|F(\om)|<\I}}=1.
}
\end{lem}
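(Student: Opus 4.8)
The plan is to deduce Lemma \ref{lem:probability-estimate} from the moment bound $\norm{F}_{L_\om^p}\loe \sqrt p\, C_0 K$ via the Chebyshev/Markov inequality optimized over $p$, which is the standard argument of \cite{Tzv10gibbs}. First I would fix $\la>0$ and apply Markov's inequality to the nonnegative random variable $|F|^p$, which gives
\EQ{
\PP\brkb{\fbrkb{\om\in\Om:|F(\om)|>\la}} \loe \la^{-p}\norm{F}_{L_\om^p(\Om)}^p \loe \la^{-p} \brkb{\sqrt p\, C_0 K}^p = \brkb{\frac{\sqrt p\, C_0 K}{\la}}^p,
}
valid for every $p\goe p_0$.

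Next I would optimize the right-hand side in $p$. Writing the bound as $\exp\fbrkb{p\log\brkb{\sqrt p\, C_0 K/\la}}$, the natural choice is to pick $p$ proportional to $\la^2 K^{-2}$, say $p=\la^2(e C_0^2 K^2)^{-1}$, so that $\sqrt p\, C_0 K/\la = e^{-1/2}$ and the bound becomes $e^{-p/2}=\exp\fbrkb{-\la^2(2eC_0^2K^2)^{-1}}$. This yields the claimed estimate with $c=(2eC_0^2)^{-1}$ once the constraint $p\goe p_0$ is satisfied, i.e. when $\la\goe \sqrt{e}\,C_0 K\sqrt{p_0}=:\la_0$. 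For the remaining range $0<\la<\la_0$, the bound $\PP(\dots)\loe 1$ is trivially true, and one absorbs this into the constant $C_1$: since $e^{-c\la^2K^{-2}}\goe e^{-c\la_0^2K^{-2}} = e^{-cep_0C_0^2/(2eC_0^2)} $ — wait, more simply, for $\la<\la_0$ one has $e^{-c\la^2 K^{-2}} \goe e^{-c\la_0^2K^{-2}}$, a fixed positive constant depending only on $C_0,p_0$, so choosing $C_1$ to be the reciprocal of this constant makes $C_1 e^{-c\la^2K^{-2}}\goe 1\goe \PP(\dots)$. Thus the stated inequality holds for all $\la>0$ with $c>0$ and $C_1>0$ depending only on $C_0$ and $p_0$, independent of $K$ as required.

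Finally, for the last assertion I would show $\PP\brkb{\fbrkb{\om:|F(\om)|=\I}}=0$, which gives $\PP\brkb{\fbrkb{\om:|F(\om)|<\I}}=1$. Indeed, $\fbrk{|F|=\I}\subset \fbrk{|F|>\la}$ for every $\la>0$, so $\PP(|F|=\I)\loe C_1 e^{-c\la^2K^{-2}}$ for all $\la$ (if $K>0$; if $K=0$ then $F=0$ a.s. and the claim is immediate), and letting $\la\ra\I$ forces $\PP(|F|=\I)=0$.

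There is essentially no serious obstacle here: the only mild subtlety is the bookkeeping for small $\la$, where the $p\goe p_0$ restriction prevents the optimization from being carried out directly and one must instead use the trivial bound and enlarge $C_1$. One should also take care that the optimizing value $p=\la^2(eC_0^2K^2)^{-1}$ need not be an integer, but since the moment hypothesis is assumed for all real $p\goe p_0$ (and in any case one can replace $p$ by $\lceil p\rceil$ at the cost of harmless constants) this causes no difficulty.
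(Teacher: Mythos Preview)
Your proposal is correct and follows precisely the standard Chebyshev--optimization argument of \cite{Tzv10gibbs} that the paper cites; the paper itself does not supply a proof, only the reference. Your treatment of the small-$\la$ range and the non-integer optimizer are handled appropriately.
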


\vskip 1.5cm

\section{Almost sure Strichartz estimates}\label{sec:str}

\vskip .5cm

\subsection{Strichartz estimates}
\begin{lem}\label{lem:almostsure-strichartz}
Let $d\goe 1$ and $f\in L_x^2(\R^d)$. Suppose that the randomization $f^\om$ is defined in Definition \ref{defn:randomization}. Then, we have the following estimates:
\enu{
\item 
Given any $2\loe q, r<\I$ with $\frac2q+\frac dr\loe \frac d2$ and let $2\loe r_0<\I$ such that $(q,r_0)$ is $L_x^2$-admissible. Then for any $p\goe \max\fbrk{q,r}$ and $0\loe s\loe a(\frac{d}{r_0}-\frac{d}{r})$,
\EQn{\label{eq:bound-v-pro-ltqxr}
\norm{\jb{\nabla}^se^{it\De}f^\om}_{L_\om^pL_t^q L_x^r(\Om\times\R\times\R^d)}\lsm\sqrt{p}\norm{f}_{L_x^2(\R^d)}.
}
\item 
For any $p\goe2$,
\EQn{\label{eq:bound-v-pro-ltinftyx2}
	\norm{e^{it\De}f^\om}_{L_\om^pL_t^\I L_x^2(\Om\times\R\times\R^d)}\lsm\sqrt{p}\norm{f}_{L_x^2(\R^d)}.
}
\item 
Given any $2\loe q<\I$ and let $2\loe r_0<\I$ such that $(q,r_0)$ is $L_x^2$-admissible. For any $0\loe s<\frac{d}{r_0}\cdot a$, there exists $p_0\goe 2$ such that for any $p\goe p_0$,
\EQn{\label{eq:bound-v-pro-ltqxinfty}
\norm{\jb{\nabla}^{s}e^{it\De}f^\om}_{L_\om^pL_t^q L_x^\I(\Om\times\R\times\R^d)}\lsm\sqrt{p}\norm{f}_{L_x^2(\R^d)}.
}
\item 
Given any $2< r\loe\I$. For any $0\loe s<(\frac d2-\frac{d}{r})\cdot a$, there exists $p_0\goe 2$ such that for any $p\goe p_0$,
\EQn{\label{eq:bound-v-pro-ltinftyxr}
\norm{\jb{\nabla}^{s}e^{it\De}f^\om}_{L_\om^pL_t^\I L_x^r(\Om\times\R\times\R^d)}\lsm\sqrt{p}\norm{f}_{L_x^2(\R^d)}.
}
} 
\end{lem}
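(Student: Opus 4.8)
The plan is to reduce everything to the large deviation estimate \eqref{eq:large-deviation} combined with the smoothing property of the blocks $\bx_j$ from Lemma \ref{lem:bernstein} and the orthogonality from Lemma \ref{lem:orthogonality}. The unifying principle is: since $f^\om=\sum_j g_j(\om)\bx_j f$ and $e^{it\De}$ commutes with $\bx_j$ and with $\jb{\nabla}^s$, Minkowski's inequality (moving the $L^p_\om$ norm inside $L^q_tL^r_x$ whenever $p\goe\max\{q,r\}$, or inside $L^\I_t L^r_x$ after a suitable discretization in time) together with \eqref{eq:large-deviation} gives
\[
\norm{\jb{\nabla}^s e^{it\De}f^\om}_{L_\om^p L_t^q L_x^r}
\lsm \sqrt p\,\normbb{\brkb{\sum_j \abs{\jb{\nabla}^s e^{it\De}\bx_j f}^2}^{1/2}}_{L_t^q L_x^r},
\]
and then one must bound the square-function on the right by $\norm{f}_{L_x^2}$. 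For item (1), on each block $\jb{\nabla}^s$ is comparable to $N^s$ where $N$ is the dyadic frequency of $Q_j$; one first interpolates/embeds to pass from the exponent $r$ down to the $L_x^2$-admissible exponent $r_0$, picking up a factor $N^{a(d/r_0-d/r)}$ from Lemma \ref{lem:bernstein} (this is exactly where $s\loe a(d/r_0-d/r)$ is used and where the smoothing is consumed), then applies the Strichartz estimate \eqref{eq:strichartz-1} with the admissible pair $(q,r_0)$ block-by-block, and finally sums over $j$ using $\ell^2$-orthogonality (Lemma \ref{lem:orthogonality}) after exchanging $\ell^2_j$ and $L_x^{r_0}$ via Minkowski (legitimate since $r_0\goe2$). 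Item (2) is the same argument with the trivial $L_t^\I L_x^2$ bound $\norm{e^{it\De}\bx_j f}_{L_t^\I L_x^2}=\norm{\bx_j f}_{L_x^2}$ in place of Strichartz, so no smoothing is needed.

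For items (3) and (4), where the spatial exponent is $L_x^\I$, I would first use Lemma \ref{lem:bernstein} to trade $L_x^\I$ for $L_x^{r}$ with $r<\I$ large but finite, at the cost of $N^{ad/r}$; more precisely, on each block $\norm{\bx_j F}_{L_x^\I}\lsm \norm{\jb{\nabla}^{a d/r}\bx_j F}_{L_x^{r}}$, and then run the argument of item (1) with spatial exponent $r$. The point is that the hypotheses $s<\frac{d}{r_0}a$ (resp. $s<(\frac d2-\frac dr)a$) are strict, so there is room to choose $r$ finite with $s+\frac{d}{r}\cdot a$ (or the analogous quantity) still below the admissible threshold; this is why one only gets the estimate for $p\goe p_0$ with $p_0=p_0(s,a,r)$ rather than for all $p\goe2$, because the interpolation exponent degrades and one needs $p$ large relative to the chosen $r$. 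Summation over $j$ again uses Lemma \ref{lem:orthogonality}; for the $L_t^\I$ cases in (3)–(4) one additionally notes that after the frequency localization the time-regularity is harmless because $e^{it\De}\bx_j f$ is, on each block, a function whose time-derivative is controlled (or, more robustly, one runs the $L^q_t$ estimate for finite $q$ and then upgrades to $q=\I$ by a Bernstein-in-time / Sobolev embedding in $t$ after inserting $\jb{\nabla}$'s, which the excess room in $s$ permits).

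The main obstacle I anticipate is the endpoint bookkeeping in (3) and (4): one must verify that the smoothing budget $ad/r_0$ (resp. $a(\frac d2-\frac dr)$) genuinely covers both the desired $s$ derivatives \emph{and} the extra $ad/r$ needed to descend from $L_x^\I$ to $L_x^r$, uniformly as $r\to\I$, and simultaneously keep $p\goe\max\{q,r\}$ so that Minkowski applies — these two demands pull $r$ in opposite directions, and resolving them is precisely where the strict inequalities and the freedom to enlarge $p_0$ are spent. Everything else (commuting $e^{it\De}$ past $\bx_j$ and $\jb{\nabla}^s$, the $\ell^2_j$-orthogonality, moving $L^p_\om$ and $\ell^2_j$ inside the spatial/temporal norms) is routine once the exponents are lined up. I would also take care that in item (1) the hypothesis $\frac2q+\frac dr\loe\frac d2$ guarantees the auxiliary admissible exponent $r_0\goe r$ exists (so that $d/r_0-d/r\goe0$ and Lemma \ref{lem:bernstein} applies in the correct direction), and record the dependence of implicit constants on $p$ only through the factor $\sqrt p$ coming from \eqref{eq:large-deviation}.
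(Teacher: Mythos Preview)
Your treatment of items (1) and (2) is correct and matches the paper's argument. For items (3) and (4), however, there is a genuine order-of-operations gap in how you handle $L_x^\I$. You propose to use Lemma~\ref{lem:bernstein} block-wise to pass from $L_x^\I$ down to a finite $L_x^r$, but Lemma~\ref{lem:bernstein} applies only to a single block $\bx_j F$, not to the full sum $f^\om=\sum_j g_j(\om)\bx_j f$. One cannot work block-by-block until \emph{after} the large deviation estimate has been applied, and that step in turn requires first moving $L_\om^p$ inside the space-time norms via Minkowski --- which is precisely what fails when the inner norm is $L_x^\I$ and $p<\I$. So the scheme ``first apply Lemma~\ref{lem:bernstein}, then run item (1)'' never gets off the ground: you can neither commute $L_\om^p$ past $L_x^\I$ nor apply the block-wise Bernstein to the un-decoupled $f^\om$.

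The paper resolves this by using the \emph{ordinary} Sobolev embedding $W_x^{2\ep,d/\ep}(\R^d)\hra L_x^\I(\R^d)$ (valid for arbitrary functions, not only blocks) \emph{before} any Minkowski or randomization step, reducing $L_x^\I$ to a large but finite $L_x^{d/\ep}$ at the cost of only $2\ep$ derivatives. One then sets $p_0=\max\{q,d/\ep\}$ so that Minkowski applies, and the remaining argument is exactly your item (1) with spatial exponent $d/\ep$; the strict inequality $s<\frac{d}{r_0}a$ absorbs the extra $2\ep$. For item (4) the paper handles $L_t^\I$ analogously, via $W_t^{2\ep,1/\ep}\hra L_t^\I$ together with $\jb{\pd_t}^{2\ep}e^{it\De}\lsm\jb{\nabla}^{4\ep}e^{it\De}$ to convert time regularity to spatial regularity; your Sobolev-in-$t$ remark is the correct idea here. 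Two minor slips: in item (1) you wrote $r_0\goe r$ but in fact $r_0\loe r$ (your conclusion $d/r_0-d/r\goe0$ is nonetheless correct), and Lemma~\ref{lem:bernstein} gives $\norm{\bx_j F}_{L_x^\I}\lsm\norm{\jb{\nabla}^{-ad/r}\bx_j F}_{L_x^r}$, a \emph{gain} of derivatives, not a cost.
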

\begin{proof}
In the proof of this lemma, we restrict the variables on $\om\in\Om$, $t\in\R$, $x\in\R^d$, and $j\in\N$.

We first prove \eqref{eq:bound-v-pro-ltqxr}. By Minkowski's inequality and Lemma \ref{lem:large-deviation}, we have
\EQn{\label{esti:bound-v-pro-ltqxr-1}
\norm{\jb{\nabla}^se^{it\De}f^\om}_{L_\om^pL_t^q L_x^r}\lsm & \norm{\jb{\nabla}^se^{it\De}f^\om}_{L_t^q L_x^rL_\om^p} \\
\lsm & \sqrt{p} \norm{\jb{\nabla}^se^{it\De}\bx_jf}_{L_t^q L_x^rl_j^2} \\
\lsm & \sqrt{p} \norm{\jb{\nabla}^se^{it\De}\bx_jf}_{l_j^2L_t^q L_x^r}.
}
Now, let $2\loe r_0\loe r$ such that $(q,r_0)$ is $L_x^2$-admissible. 
Then, for any $k\in\N^+$, by Lemma \ref{lem:bernstein}, we have
\EQn{\label{esti:bound-v-pro-ltqxr-2}
\norm{\jb{\nabla}^se^{it\De}\bx_jf}_{L_t^q L_x^r}
\lsm & \normb{\jb{\nabla}^{s-a(\frac{d}{r_0}-\frac{d}{r})}e^{it\De}\bx_jf}_{L_t^q L_x^{r_0}}\\
\lsm & \normb{e^{it\De}\bx_jf}_{L_t^q L_x^{r_0}}.
}
Then, by \eqref{esti:bound-v-pro-ltqxr-1}, \eqref{esti:bound-v-pro-ltqxr-2}, Lemmas \ref{lem:strichartz}, and \ref{lem:orthogonality}, we have
\EQ{
\norm{\jb{\nabla}^se^{it\De}f^\om}_{L_\om^pL_t^q L_x^r}\lsm & \sqrt{p} \normb{e^{it\De}\bx_jf}_{L_t^q L_x^{r_0}} \\
\lsm & \sqrt{p} \norm{ \bx_jf}_{l_j^2 L_x^2}\lsm \sqrt{p} \norm{f}_{L_x^2}.
}
This gives \eqref{eq:bound-v-pro-ltqxr}.

Next, we prove \eqref{eq:bound-v-pro-ltinftyx2}. By Plancherel's identity, 
\EQn{\label{esti:bound-v-pro-ltinftyx2-1}
	\norm{\jb{\nabla}^{s}e^{it\De}f^\om}_{L_\om^pL_t^\I L_x^2}\lsm \norm{\jb{\nabla}^{s}f^\om}_{L_\om^p L_x^2}\lsm \norm{\jb{\nabla}^{s}f^\om}_{L_\om^p L_x^2}.
}
Then for $p\goe 2$, by Minkowski's inequality, Lemmas \ref{lem:large-deviation}, and \ref{lem:orthogonality},
\EQn{\label{esti:bound-v-pro-ltinftyx2-2}
	\norm{\jb{\nabla}^{s}f^\om}_{L_\om^p L_x^2}\lsm & \norm{\jb{\nabla}^{s}f^\om}_{L_x^2 L_\om^p} \\
	\lsm & \sqrt{p} \norm{\jb{\nabla}^{s}\bx_j f}_{L_x^2 l_j^2}\lsm \sqrt{p}\norm{f}_{H_x^s}.
}
Then, \eqref{esti:bound-v-pro-ltinftyx2-1} and \eqref{esti:bound-v-pro-ltinftyx2-2} imply \eqref{eq:bound-v-pro-ltinftyx2}.

We then prove \eqref{eq:bound-v-pro-ltqxinfty}. Let $0<\ep\loe \frac{1}{a+2}(\frac{d}{r_0}\cdot a-s)$ such that
\EQ{
s+2\ep-a(\frac{d}{r_0}-\ep)\loe 0.
}
Using the Sobolev's embedding $ W_x^{2\ep,\frac d\ep} \hra L_x^\I$ in $x$, we have
\EQn{\label{esti:bound-v-pro-ltqxinfty-1}
\norm{\jb{\nabla}^{s}e^{it\De}f^\om}_{L_\om^pL_t^q L_x^\I}\lsm \norm{\jb{\nabla}^{s+2\ep}e^{it\De}f^\om}_{L_\om^pL_t^q L_x^{\frac d\ep}}.
}
Let $p_0=\max\fbrk{q,\frac d\ep}$ and $2\loe r_0\loe\frac d\ep$ such that $(q,r_0)$ is $L_x^2$-admissible. Then, similar as above, by Minkowski's inequality, Lemmas \ref{lem:strichartz}, \ref{lem:bernstein}, \ref{lem:orthogonality}, and \ref{lem:large-deviation}, for any $p\goe p_0$, we have
\EQn{\label{esti:bound-v-pro-ltqxinfty-2}
\norm{\jb{\nabla}^{s+2\ep}e^{it\De}f^\om}_{L_\om^pL_t^q L_x^{\frac d\ep}} \lsm & \norm{\jb{\nabla}^{s +2\ep}e^{it\De}f^\om}_{L_t^q L_x^{\frac d\ep}L_\om^p} \\
\lsm & \sqrt{p} \norm{\jb{\nabla}^{s+2\ep}e^{it\De}\bx_jf}_{L_t^q L_x^{\frac d\ep}l_j^2} \\
\lsm & \sqrt{p} \norm{\jb{\nabla}^{s+2\ep}e^{it\De}\bx_jf}_{l_j^2L_t^q L_x^{\frac d\ep}} \\
\lsm & \sqrt{p} \norm{\jb{\nabla}^{s+2\ep -a(\frac{d}{r_0}-\ep)}e^{it\De}\bx_jf}_{l_j^2L_t^q L_x^{r_0}} \\
\lsm & \sqrt{p} \norm{\bx_jf}_{l_j^2 L_x^2}\lsm \sqrt{p} \norm{f}_{L_x^2}.
}
By \eqref{esti:bound-v-pro-ltqxinfty-1} and \eqref{esti:bound-v-pro-ltqxinfty-2}, we have that \eqref{eq:bound-v-pro-ltqxinfty} holds.

Finally, we prove \eqref{eq:bound-v-pro-ltinftyxr}. We only consider the $r=\I$ case. In fact, when $r<\I$, we can prove it using interpolation between \eqref{eq:bound-v-pro-ltinftyx2} and the $r=\I$ case. Let $0\loe s<\frac d2\cdot a$ and some sufficiently small $0<\ep\loe \frac{1}{6+3a}(\frac d 2 a-s)$ such that
\EQ{
s+6\ep-a(\frac{d}{2}-3\ep)\loe 0.
}
Using Minkowski's inequality, the Sobolev's embeddings $W_t^{2\ep,\frac1\ep}\hra L_t^\I$ in $t$, and $ W_x^{2\ep,\frac d\ep} \hra L_x^\I$ in $x$, we have
\EQn{\label{esti:bound-v-pro-ltinftyxr-1}
\norm{\jb{\nabla}^{s}e^{it\De}f^\om}_{L_\om^pL_t^\I L_x^\I}
\lsm & 	\norm{\jb{\nabla}^{s+2\ep}e^{it\De}f^\om}_{L_\om^pL_t^\I L_x^{\frac{3}{\ep}}} \\
\lsm & \norm{\jb{\nabla}^{s+2\ep}\jb{\pd_t}^{2\ep}e^{it\De}f^\om}_{L_\om^p L_x^{\frac{3}{\ep}} L_t^{\frac1\ep}}\\
\lsm & \norm{\jb{\nabla}^{s+6\ep}e^{it\De}f^\om}_{L_\om^p L_x^{\frac{3}{\ep}} L_t^{\frac1\ep}}\\
\lsm & \norm{\jb{\nabla}^{s+6\ep}e^{it\De}f^\om}_{L_\om^p  L_t^{\frac1\ep} L_x^{\frac{3}{\ep}}}.
}
Let $p_0=\frac3\ep$ and $r_0=\frac{2d}{d-4\ep}$ such that $2< r_0 \loe \frac3\ep$ and $(\frac1\ep,r_0)$ is $L_x^2$-admissible. By the choice of $\ep$ and $r_0$, we have
\EQ{
s+6\ep-a(\frac{d}{r_0}-\ep)\loe 0.
}
Then, similar as above, for any $p\goe p_0$, 
\EQn{\label{esti:bound-v-pro-ltinftyxr-2}
	\norm{\jb{\nabla}^{s+6\ep}e^{it\De}f^\om}_{L_\om^pL_t^{\frac1\ep} L_x^{\frac3\ep}} \lsm & \norm{\jb{\nabla}^{s+6\ep}e^{it\De}f^\om}_{L_t^{\frac1\ep} L_x^{\frac3\ep} L_\om^p} \\
	\lsm & \sqrt{p} \norm{\jb{\nabla}^{s+6\ep}e^{it\De}\bx_jf}_{L_t^{\frac1\ep} L_x^{\frac3\ep} l_j^2} \\
	\lsm & \sqrt{p} \norm{\jb{\nabla}^{s+6\ep}e^{it\De}\bx_jf}_{l_j^2 L_t^{\frac1\ep} L_x^{\frac3\ep}} \\
	\lsm & \sqrt{p} \norm{\jb{\nabla}^{s+6\ep-a(\frac{d}{r_0}-\ep)}e^{it\De}\bx_jf}_{l_j^2 L_t^{\frac1\ep} L_x^{r_0}} \\
	\lsm & \sqrt{p} \norm{\bx_jf}_{l_j^2 L_x^2} \lsm \sqrt{p} \norm{f}_{L_x^2}.
}
By \eqref{esti:bound-v-pro-ltinftyxr-1} and \eqref{esti:bound-v-pro-ltinftyxr-2}, we have that \eqref{eq:bound-v-pro-ltinftyxr} holds.
\end{proof}

We make a few remarks about this randomization. We prove the Strichartz estimates with smoothing effect, namely some gain of derivative quantified by $|\nabla|^{Ca}$ with some $C>0$. Since we do not assume the radial condition, $L_t^q L_x^r$-estimates with $\frac{2}{q} + \frac dr>\frac d2$ are unavailable. Moreover, when $f\in H_x^s$ with $s<0$, we are also lack of the $L^2$-critical estimates in the form of 
\EQ{
\norm{e^{it\De}f^\om}_{L_t^q L_x^r}
}
with $L^2$-admissible $(q,r)$. However, we can expect the $L_t^q L_x^r$ estimates hold for $L^2$ super-critical scaling, by covering the $s$-order derivative by the smoothing effect. 

Next, we gather all the space-time norms that will be used below, splitting into two cases.
\subsection{Linear estimates in 3D case}
Define the $Y(I)$ space by its norm in 3D case,
\EQn{\label{defn:ys-norm-3d}
\norm{v}_{Y(I)}:=& \normb{\jb{\nabla}^{s+\frac12a-}v}_{L_t^2 L_x^\I(I\times \R^3)} + \norm{v}_{L_{t,x}^8(I\times \R^3)} + \norm{v}_{L_{t,x}^4(I\times \R^3)} \\
&+ \norm{v}_{L_t^8 L_x^{12}(I\times \R^3)}.
}
We also define the $Z$-norm by
\EQn{\label{defn:z-norm}
\norm{v}_{Z(I)}:=  \norm{v}_{L_t^\I H_x^s(I\times\R^3)} 
+ \normb{\jb{\nabla}^{s+\frac{3}{2} a-}v}_{L_t^\I L_x^{\I}(I\times\R^3)}.
}

\begin{cor}\label{cor:Y-norm-Z-norm}
Let $s\in\R$ and $f\in H_x^s(\R^3)$. Suppose that the randomization $f^\om$ is defined in Definition \ref{defn:randomization}. Then, there exist constants $C,c>0$ such that for any $\la$,
\EQn{\label{eq:Y-norm-Z-norm-1}
	\PP\brkb{\fbrkb{\om\in \Om:\norm{e^{it\De}f^\om}_{Y(\R)} + \norm{e^{it\De}f^\om}_{Z(\R)}>\la}}\loe C\exp\fbrkb{-c\la^2\norm{f}_{H_x^s(\R^3)}^{-2}},
}
and we also have
\EQn{\label{eq:Y-norm-Z-norm-2}
\norm{e^{it\De}f^\om}_{Y(\R)} + \norm{e^{it\De}f^\om}_{Z(\R)} <+\I,\quad \mbox{a.e.} \>\>\om\in\Om.
}
\end{cor}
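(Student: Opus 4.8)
The plan is to deduce Corollary \ref{cor:Y-norm-Z-norm} directly from Lemma \ref{lem:almostsure-strichartz} and the abstract probability device Lemma \ref{lem:probability-estimate}. First I would recall that the $Y(\R)$- and $Z(\R)$-norms are finite sums of mixed space-time norms of $e^{it\De}f^\om$, each of the form $\normb{\jb{\nabla}^{s'}e^{it\De}f^\om}_{L_t^q L_x^r}$ with $(q,r)$ and the exponent $s'$ falling into one of the four regimes covered by Lemma \ref{lem:almostsure-strichartz}. Concretely: the term $\normb{\jb{\nabla}^{s+\frac12 a-}v}_{L_t^2 L_x^\I}$ is handled by \eqref{eq:bound-v-pro-ltqxinfty} with $q=2$ (note $\frac d2\cdot a=\frac32 a>s+\frac12 a$, so $s+\frac12 a-<\frac{d}{r_0}\cdot a$ after choosing $r_0$ the $L^2$-admissible partner of $q=2$, which when $d=3$ is $r_0=6$, giving the threshold $\frac{d}{r_0}a=\frac12 a$, so we lose an epsilon — this is exactly why the ``$-$" appears); the three terms $\norm{v}_{L_{t,x}^8}$, $\norm{v}_{L_{t,x}^4}$, $\norm{v}_{L_t^8 L_x^{12}}$ are all of the form $\norm{\jb{\nabla}^0 e^{it\De}f^\om}_{L_t^q L_x^r}$ with $2\le q,r<\I$ and $\frac2q+\frac dr\le\frac d2$ (one checks $\frac28+\frac38=\frac58\le\frac32$, $\frac24+\frac34=\frac54\le\frac32$, $\frac28+\frac{3}{12}=\frac12\le\frac32$), hence covered by \eqref{eq:bound-v-pro-ltqxr}; the $Z$-norm term $\norm{v}_{L_t^\I H_x^s}$ is covered by \eqref{eq:bound-v-pro-ltinftyx2}, and $\normb{\jb{\nabla}^{s+\frac32 a-}v}_{L_t^\I L_x^\I}$ by \eqref{eq:bound-v-pro-ltinftyxr} with $r=\I$, since $(\frac d2-0)\cdot a=\frac32 a>s+\frac32 a-$ again with an epsilon to spare.

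Next I would combine these bounds. For each of the finitely many norms, Lemma \ref{lem:almostsure-strichartz} supplies a constant $p_0$ (equal to $2$ in cases (1),(2), and some finite value in cases (3),(4)) and the estimate $\norm{(\text{that norm})}_{L_\om^p}\lsm\sqrt p\,\norm{f}_{H_x^s}$ for all $p\ge p_0$. Taking $p_0$ to be the maximum of the finitely many thresholds and summing, we obtain
\EQ{
\norm{\,\norm{e^{it\De}f^\om}_{Y(\R)}+\norm{e^{it\De}f^\om}_{Z(\R)}\,}_{L_\om^p(\Om)}\lsm \sqrt p\,\norm{f}_{H_x^s(\R^3)}\quad\text{for all }p\ge p_0,
}
using Minkowski's inequality to pass the $L_\om^p$ norm inside the finite sum. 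This is precisely the hypothesis of Lemma \ref{lem:probability-estimate} with $F=\norm{e^{it\De}f^\om}_{Y(\R)}+\norm{e^{it\De}f^\om}_{Z(\R)}$, $K=\norm{f}_{H_x^s(\R^3)}$, and $C_0$ an absolute constant; that lemma then yields the tail bound \eqref{eq:Y-norm-Z-norm-1} with constants $C,c>0$ independent of $f$, and the almost-sure finiteness \eqref{eq:Y-norm-Z-norm-2} as the stated ``particularly'' conclusion.

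I do not expect any genuine obstacle here — the corollary is essentially a bookkeeping assembly of the already-established lemmas. The only points requiring a little care are (i) verifying that each admissibility/derivative inequality in the definition of $Y$ and $Z$ genuinely lands strictly inside the allowed range so that the relevant case of Lemma \ref{lem:almostsure-strichartz} applies (in particular that the parameter constraint $a>\max\{3-4s,1-2s,10\}$ from Definition \ref{defn:randomization} guarantees the smoothing gain $\frac d2 a=\frac32 a$ exceeds the required orders $s+\frac12 a$ and $s+\frac32 a$, which reduces to $a>1-2s$ and $a>3-4s$ respectively in $d=3$, hence the appearance of these quantities in the hypothesis on $a$), and (ii) tracking that the finite maximum of the $p_0$'s is still finite, which is immediate. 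Beyond that the argument is routine, so I would keep the write-up short, citing Lemmas \ref{lem:almostsure-strichartz} and \ref{lem:probability-estimate} and the verification of the exponent inequalities.
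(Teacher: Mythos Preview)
Your proposal is correct and follows essentially the same route as the paper: verify that each component of the $Y$- and $Z$-norms falls under one of the four cases of Lemma \ref{lem:almostsure-strichartz}, sum the resulting $L_\om^p$ bounds, and feed the outcome into Lemma \ref{lem:probability-estimate}. One small inaccuracy in your closing parenthetical: the constraint $a>3-4s$ (equivalently $s>\tfrac34-\tfrac14a$) is invoked in the paper not for the $L_t^2L_x^\infty$ or $L_t^\infty L_x^\infty$ terms but to guarantee that the derivative gains $s+\tfrac78a$, $s+\tfrac14a$, $s+a$ obtained from \eqref{eq:bound-v-pro-ltqxr} on the $L_{t,x}^8$, $L_{t,x}^4$, $L_t^8L_x^{12}$ terms are nonnegative (the tightest being $s+\tfrac14 a>0$), so that one may pass from $\norm{\jb{\nabla}^{s+\alpha}\,\cdot\,}$ back down to $\norm{\cdot}$; the constraint $a>1-2s$ is not actually used in the 3D corollary.
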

\begin{proof}
In the proof of this corollary, we restrict the variables on $\om\in\Om$, $t\in\R$, $x\in\R^3$. Fix a sufficiently large $p_0>0$, and let $p\goe p_0$. First, by \eqref{eq:bound-v-pro-ltqxinfty}, 
\EQ{
\normb{\jb{\nabla}^{s+\frac12a-}e^{it\De}f^\om}_{L_\om^pL_t^2 L_x^\I}\lsm\sqrt{p}\norm{f}_{H_x^s},
}
and by \eqref{eq:bound-v-pro-ltqxr},
\EQ{
\normb{\jb{\nabla}^{s+\frac78a}e^{it\De}f^\om}_{L_\om^pL_{t,x}^8}\lsm\sqrt{p}\norm{f}_{H_x^s},\\
\normb{\jb{\nabla}^{s+\frac14a}e^{it\De}f^\om}_{L_\om^pL_{t,x}^4}\lsm\sqrt{p}\norm{f}_{H_x^s},\\
\norm{\jb{\nabla}^{s+a}e^{it\De}f^\om}_{L_\om^pL_t^8 L_x^{12}}\lsm\sqrt{p}\norm{f}_{H_x^s}.
}
Since $s>\frac34-\frac14a$, we have
\EQ{
\normb{e^{it\De}f^\om}_{L_\om^pL_{t,x}^8} + \normb{e^{it\De}f^\om}_{L_\om^pL_{t,x}^4} + \normb{e^{it\De}f^\om}_{L_\om^pL_t^8 L_x^{12}}\lsm\sqrt{p}\norm{f}_{H_x^s}.
}
Therefore, we have
\EQ{
\normb{e^{it\De}f^\om}_{L_\om^p Y }\lsm\sqrt{p}\norm{f}_{H_x^s}.
}
By \eqref{eq:bound-v-pro-ltinftyx2} and \eqref{eq:bound-v-pro-ltinftyxr},
\EQ{
	\normb{e^{it\De}f^\om}_{L_\om^p Z }\lsm\sqrt{p}\norm{f}_{H_x^s}.
}
Then, by Lemma  \ref{lem:probability-estimate}, we have \eqref{eq:Y-norm-Z-norm-1} and \eqref{eq:Y-norm-Z-norm-2} hold.
\end{proof}

\subsection{Linear estimates in 4D case}
Define the $Y(I)$ space by its norm in 4D case,
\EQn{\label{defn:ys-norm-4d}
	\norm{v}_{Y(I)}:=&\normb{\jb{\nabla}^{s+a-}v}_{L_t^2 L_x^\I(I\times \R^4)} + \norm{v}_{L_t^4 L_x^8(I\times \R^4)} + \norm{v}_{L_t^6 L_x^3(I\times \R^4)} \\
	& + \normb{\jb{\nabla}^{-\frac14}v}_{L_{t,x}^4(I\times \R^4)}.
}
We also define the $Z$-norm by
\EQn{\label{defn:z-norm-4d}
	\norm{v}_{Z(I)}:=  \norm{v}_{L_t^\I H_x^s(\R\times\R^4)} 
	+ \normb{\jb{\nabla}^{s+2 a-}v}_{L_t^\I L_x^{\I}(\R\times\R^4)}.
}

\begin{cor}\label{cor:Y-norm-Z-norm-4d}
	Let $s\in\R$ and $f\in H_x^s(\R^4)$. Suppose that the randomization $f^\om$ is defined in Definition \ref{defn:randomization}. Then, there exist constants $C,c>0$ such that for any $\la$,
	\EQn{\label{eq:Y-norm-Z-norm-1-4d}
		\PP\brkb{\fbrkb{\om\in \Om:\norm{e^{it\De}f^\om}_{Y(\R)} + \norm{e^{it\De}f^\om}_{Z(\R)}>\la}}\loe C\exp\fbrkb{-c\la^2\norm{f}_{H_x^s(\R^4)}^{-2}},
	}
	and we also have
	\EQn{\label{eq:Y-norm-Z-norm-2-4d}
		\norm{e^{it\De}f^\om}_{Y(\R)} + \norm{e^{it\De}f^\om}_{Z(\R)} <+\I,\quad \mbox{a.e.} \>\>\om\in\Om.
	}
\end{cor}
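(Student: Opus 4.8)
The plan is to repeat the proof of Corollary~\ref{cor:Y-norm-Z-norm} line by line, replacing $d=3$ with $d=4$ and adjusting the exponents; the whole argument is a matter of matching each space-time norm in \eqref{defn:ys-norm-4d}--\eqref{defn:z-norm-4d} against the appropriate estimate of Lemma~\ref{lem:almostsure-strichartz}. Fix a sufficiently large $p_0$, let $p\goe p_0$, and restrict all variables to $\om\in\Om$, $t\in\R$, $x\in\R^4$. The one structural remark to record first is that the randomization commutes with Fourier multipliers, so $\jb{\nabla}^{\si}e^{it\De}f^\om=e^{it\De}\brkb{\jb{\nabla}^{\si}f}^\om$ for every $\si\in\R$; writing $g:=\jb{\nabla}^sf\in L_x^2$ with $\norm{g}_{L_x^2}=\norm{f}_{H_x^s}$, any bound of the form $\norm{\jb{\nabla}^{\si}e^{it\De}f^\om}_{L_\om^pX}\lsm\sqrt p\,\norm{f}_{H_x^s}$ reduces to an application of Lemma~\ref{lem:almostsure-strichartz} to $g$ with regularity index $\si-s$, valid provided $0\loe\si-s$ lies below the smoothing threshold recorded there.

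Next I would bound the four pieces of the $4$D $Y$-norm. The term $\normb{\jb{\nabla}^{s+a-}v}_{L_t^2L_x^\I}$ follows from \eqref{eq:bound-v-pro-ltqxinfty} with $q=2$ and its $L_x^2$-admissible companion $r_0=4$, whose threshold $\tfrac{d}{r_0}a=a$ accommodates the index $a-$; the remaining pieces $\norm{v}_{L_t^4L_x^8}$, $\norm{v}_{L_t^6L_x^3}$, $\normb{\jb{\nabla}^{-\frac14}v}_{L_{t,x}^4}$ all come from \eqref{eq:bound-v-pro-ltqxr} with admissible companions $r_0=\tfrac83$, $\tfrac{12}{5}$, $\tfrac83$ and smoothing budgets $a\brk{\tfrac{d}{r_0}-\tfrac{d}{r}}$ equal to $a$, $\tfrac a3$, $\tfrac a2$, respectively. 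In each case the required index ($-s$, $-s$, $-\tfrac14-s$, or simply $0$ when the relevant expression is already $\loe0$) is nonnegative and strictly below the budget because of the lower bound on $a$ in Definition~\ref{defn:randomization}; for instance $-\tfrac14-s\loe\tfrac a2$ is equivalent to $a\goe-\tfrac12-2s$, which is implied by $a>\max\fbrk{3-4s,10}$. For the $Z$-norm, $\norm{v}_{L_t^\I H_x^s}$ follows from \eqref{eq:bound-v-pro-ltinftyx2} via the same reduction to $g$, and $\normb{\jb{\nabla}^{s+2a-}v}_{L_t^\I L_x^\I}$ from \eqref{eq:bound-v-pro-ltinftyxr} with $r=\I$, whose threshold $\tfrac d2a=2a$ exceeds $2a-$.

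Collecting the pieces yields $\norm{e^{it\De}f^\om}_{L_\om^pY(\R)}+\norm{e^{it\De}f^\om}_{L_\om^pZ(\R)}\lsm\sqrt p\,\norm{f}_{H_x^s(\R^4)}$ for all $p\goe p_0$, and then Lemma~\ref{lem:probability-estimate}, applied with $F=\norm{e^{it\De}f^\om}_{Y(\R)}+\norm{e^{it\De}f^\om}_{Z(\R)}$ and $K=\norm{f}_{H_x^s}$, delivers the sub-Gaussian tail bound \eqref{eq:Y-norm-Z-norm-1-4d} together with the almost-sure finiteness \eqref{eq:Y-norm-Z-norm-2-4d}. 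I do not anticipate a genuine obstacle here; the only care needed is the bookkeeping of admissible pairs and verifying, term by term, that the derivative gain of order $a$ supplied by the narrowed randomization absorbs the (possibly negative) regularity $s$, which is exactly what the hypothesis $a>\max\fbrk{3-4s,1-2s,10}$ guarantees.
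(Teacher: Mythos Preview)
Your proposal is correct and follows essentially the same approach as the paper: both arguments reduce each piece of the $Y$- and $Z$-norms to the appropriate case of Lemma~\ref{lem:almostsure-strichartz} (after the substitution $g=\jb{\nabla}^sf$), verify that the smoothing budget $a\brk{\tfrac{d}{r_0}-\tfrac{d}{r}}$ absorbs the required derivative loss using the lower bound on $a$ from Definition~\ref{defn:randomization}, and then invoke Lemma~\ref{lem:probability-estimate}. Your bookkeeping of the admissible companions $r_0$ and the corresponding thresholds matches the paper's, and you are somewhat more explicit than the paper in recording the commutation $\jb{\nabla}^\sigma e^{it\De}f^\om=e^{it\De}(\jb{\nabla}^\sigma f)^\om$ and in checking the inequalities term by term.
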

\begin{proof}
	In the proof of this corollary, we restrict the variables on $\om\in\Om$, $t\in\R$, $x\in\R^4$. Fix a sufficiently large $p_0>0$, and let $p\goe p_0$. First, by \eqref{eq:bound-v-pro-ltqxinfty}, 
	\EQ{
		\normb{\jb{\nabla}^{s+a-}e^{it\De}f^\om}_{L_\om^pL_t^2 L_x^\I}\lsm\sqrt{p}\norm{f}_{H_x^s},
	}
	and by \eqref{eq:bound-v-pro-ltqxr},
	\EQ{
		\normb{\jb{\nabla}^{s+a}e^{it\De}f^\om}_{L_\om^pL_t^4 L_x^8}\lsm\sqrt{p}\norm{f}_{H_x^s},\\
		\normb{\jb{\nabla}^{s+\frac13a}e^{it\De}f^\om}_{L_\om^pL_t^6 L_x^3}\lsm\sqrt{p}\norm{f}_{H_x^s},\\
		\norm{\jb{\nabla}^{s+\frac12a}e^{it\De}f^\om}_{L_\om^pL_{t,x}^4}\lsm\sqrt{p}\norm{f}_{H_x^s}.
	}
	Since $s>\frac12-\frac12a$, we have
	\EQ{
		\normb{e^{it\De}f^\om}_{L_\om^pL_t^4 L_x^8} + \normb{e^{it\De}f^\om}_{L_\om^pL_t^6 L_x^3} + \normb{\jb{\nabla}^{-\frac14}e^{it\De}f^\om}_{L_\om^pL_{t,x}^4}\lsm\sqrt{p}\norm{f}_{H_x^s}.
	}
	Therefore, we have
	\EQ{
		\normb{e^{it\De}f^\om}_{L_\om^p Y }\lsm\sqrt{p}\norm{f}_{H_x^s}.
	}
	By \eqref{eq:bound-v-pro-ltinftyx2} and \eqref{eq:bound-v-pro-ltinftyxr},
	\EQ{
		\normb{e^{it\De}f^\om}_{L_\om^p Z }\lsm\sqrt{p}\norm{f}_{H_x^s}.
	}
	Then, by Lemma  \ref{lem:probability-estimate}, we have \eqref{eq:Y-norm-Z-norm-1-4d} and \eqref{eq:Y-norm-Z-norm-2-4d} hold.
\end{proof}

\vskip 1.5cm

\section{Global well-posedness and scattering in 3D case}\label{sec:gwp-3d}

\vskip .5cm

\subsection{Reduction to the deterministic problem}\label{sec:reduction-global}
Suppose that $u=v+w$ with $u_0=v_0+w_0$, $v=e^{it\De}v_0$, and $w$ satisfying
\EQn{
	\label{eq:nls-w}
	\left\{ \aligned
	&i\pd_t w + \De w = |u|^4 u, \\
	& w(0,x) = w_0(x).
	\endaligned
	\right.
}
Recall that
\EQ{
\norm{v}_{Y(I)}=& \normb{\jb{\nabla}^{s+\frac12a-}v}_{L_t^2 L_x^\I(I\times \R^3)} + \norm{v}_{L_{t,x}^8(I\times \R^3)} + \norm{v}_{L_{t,x}^4(I\times \R^3)} \\
&+ \norm{v}_{L_t^8 L_x^{12}(I\times \R^3)},
}
and
\EQ{
\norm{v}_{Z(I)}=\norm{v}_{L_t^\I H_x^s(\R\times\R^3)} 
+ \normb{\jb{\nabla}^{s+\frac{3}{2} a-}v}_{L_t^\I L_x^{\I}( \times\R\times\R^3)}.
}
We define the energy as
\EQn{\label{defn:energy-w}
	E(t):= \frac12\int_{\R^3}|\nabla w(t,x)|^2 \dx + \frac{1}{6}\int_{\R^3}|u(t,x)|^{6}\dx,
}
and the mass as
\EQn{\label{defn:mass-w}
	M(t):=\int_{\R^3}|w(t,x)|^2\dx.
}
Now, we reduce the proof of Theorem \ref{thm:global} in 3D case when $s<0$ to the following deterministic problem:
\begin{prop}\label{prop:global-derterministic}
Let $a\in\N$, $a> 10$, $\frac34-\frac14a<s<0$, and $A>0$. Then, there exists $N_0=N_0(A)\gg1$ such that the following properties hold.  Let $u_0\in H_x^s(\R^3)$, $v_0$ satisfy that $\supp\wh v_0\subset \fbrk{\xi\in\R^3:|\xi|\goe \frac12 N_0}$,  and $w_0=u_0-v_0$. Moreover, let $v=e^{it\Delta}v_0$ and $w=u-v$. Suppose that  $v\in Y\cap Z(\R)$, $w_0\in H^1(\R^3)$ such that
\EQ{
	\norm{u_0}_{H_x^s(\R^3)}+\norm{v}_{Y\cap Z(\R)}\loe A\text{, } M(0)\loe A N_0^{-2s}\text{, and }E(0) \loe A N_0^{2(1-s)}.
}
Then, there exists a solution $u$ of \eqref{eq:nls-3D} on $\R$ with $w\in C(\R;H_x^{1}(\R^3))$. Furthermore, there exists $u_{\pm}\in H_x^1(\R^3)$ such that
\EQ{
\lim_{t\ra\pm\I}\norm{u(t)-v(t)-e^{it\De}u_{\pm}}_{H_x^1(\R^3)}=0.
}
\end{prop}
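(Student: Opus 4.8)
The proof proceeds by a bootstrap/continuity argument on a maximal interval $I\ni 0$, simultaneously controlling the mass $M(t)$, the energy $E(t)$, and the Strichartz-type norms of $w$. Write $u=v+w$, so that $w$ solves \eqref{eq:nls-w} with $w_0\in H^1$, and recall the smallness heuristics: on $\wt\Omega_M$ we have $M(0)\lesssim N_0^{-2s}$ and $E(0)\lesssim N_0^{2(1-s)}$, with $N_0$ chosen large depending on $A$. The first task is to set up, on any compact subinterval $J\subset I$, the local-in-time Strichartz bounds for $w$ in terms of $\|w\|_{L_t^\infty H_x^1(J)}$, $\|v\|_{Y\cap Z(J)}$ and $E$, using Lemma \ref{lem:strichartz}, the fractional Leibniz rule applied to $|u|^4u=|v+w|^4(v+w)$, and the $\square_j$-Bernstein smoothing estimate Lemma \ref{lem:bernstein} which is what makes the $v$-terms with derivatives harmless even for $s<0$.

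\textbf{The two increments.} The heart is the a priori control of $M(t)$ and $E(t)$. For the mass, differentiating $M(t)=\|w(t)\|_{L_x^2}^2$ gives $\partial_t M = \int_{\R^3} O(w^{5}v)\,dx$ plus lower-order terms; the dangerous cubic-in-$w$ growth is absorbed by writing $w^5 v$ and pairing the worst $w$-factors against $\|w\|_{L_t^\infty L_x^6}^3 \lesssim E^{1/2}$-controlled quantities, while the extra $v$-regularity coming from $Z$ supplies the margin needed to beat the growth — crucially the gain is a power of $\jb{\nabla}^{\,a/2}v$ or similar, traded against (a negative power of) $\|w\|_{H^1}$, so Gronwall closes with a constant that is polynomial in $N_0$. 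For the energy, $\partial_t E = \int_I\int_{\R^3}\Delta v\,|u|^{4}\bar u\,dx\,dt$ after the usual cancellations; one isolates the genuinely problematic piece $\int v\,w^4\nabla\bar w$ and, rather than integrating by parts, uses the algebraic structure of $|u|^4u$ to transfer the derivative onto $v$, reducing to $\int |w|^4 w\,\nabla\bar v$, which is then estimated by $\|\Delta v\|_{L_t^2L_x^\infty}\,\||\nabla|^0 w\|_{L_{t,x}^4}^2\,\|w^{3}\|_{L_t^\infty L_x^2}$ (the $d=3$ case of the scheme in the sketch), the middle factor supplied by the modified interaction Morawetz estimate, which must itself be proved with the remainder modified so as to avoid $w^{4}\nabla w$ terms. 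Combining, $E(t)\lesssim E(0) + (\text{something})\cdot N_0^{-\kappa}$ for a positive $\kappa$, so taking $N_0$ large closes the bootstrap on $E$, hence on $\|w\|_{L_t^\infty H_x^1}$.

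\textbf{Globalization, scattering, and stability.} Once the a priori bounds hold on $I$, a continuity argument upgrades $I=\R$: the bootstrap quantities cannot blow up, and the local theory extends $w$. Scattering then follows from the now-global finiteness of the Strichartz norms of $w$: $\int_0^t e^{-is\Delta}|u|^4u\,ds$ converges in $H_x^1$ as $t\to\pm\infty$ to some $u_\pm$, giving $\|u-v-e^{it\Delta}u_\pm\|_{H_x^1}\to0$. The perturbation/stability lemma (in the spirit of \cite{TVZ07CPDE,KMV19CPDE}, iterated along a partition of $\R$ into finitely many intervals on which the relevant norms are small) is used both to run the local theory uniformly and to pass from the near-solution to the genuine solution; this is where one checks that the error terms generated by $v$ are controlled using exactly the $Y\cap Z$ norms that Corollary \ref{cor:Y-norm-Z-norm} shows are a.s.\ finite.

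\textbf{Main obstacle.} I expect the decisive difficulty to be the energy increment, specifically handling $\int v\,w^4\nabla\bar w$ without an $L_x^p$ bound on $w$ for $p>6$: the derivative-transfer trick using the nonlinear structure (so that it is \emph{not} a plain integration by parts) together with the modified Morawetz estimate — whose remainder must be arranged to avoid precisely the $w^{4}\nabla w$ terms — is the crux, and making its constant depend on $N_0$ only through a \emph{negative} power (so that large $N_0$ truly closes the loop) is the delicate point. The mass increment for $s<0$ is a close second, since $w\notin L_t^\infty L_x^2$ a priori would be fatal were it not for the high-low decomposition feeding extra $v$-regularity back in.
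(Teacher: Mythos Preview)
Your overall architecture matches the paper's, but two points are garbled in a way that would cause trouble in execution.

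First, you have misplaced the derivative-transfer trick. The energy increment is exactly $\frac{d}{dt}E(t)=-\re i\int |u|^4u\,\De\bar v\,dx$ (Lemma \ref{lem:conservation-law}); the derivative is \emph{already} on $v$, so there is nothing problematic here and no term of the form $\int v\,w^4\nabla\bar w$ to isolate. That term arises instead inside the \emph{Morawetz remainder} \eqref{esti:inter-mora-remainder-2}, namely in $\int\frac{x-y}{|x-y|}\cdot\re\big((|u|^4u-|w|^4w)\nabla\bar w\big)\,dx$. It is there that one rewrites $|u|^4u\nabla\bar w=|u|^4u\nabla(\bar u-\bar v)=\frac16\nabla|u|^6-|u|^4u\nabla\bar v$ (and similarly for $|w|^4w\nabla\bar w$), obtaining \eqref{Mor-Mainterm} and thereby moving the gradient onto $v$. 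The logical order is: (i) prove the modified Morawetz bound (Lemma \ref{lem:inter-mora}) using this trick; (ii) feed the resulting $\norm{w}_{L_{t,x}^4}$ control into the energy increment $\int|u|^4u\,\De\bar v$, which is estimated directly by $\norm{\De v}_{L_t^2L_x^\infty}(\norm{w}_{L_{t,x}^4}^2+\norm{v}_{L_{t,x}^4}^2)(\norm{w}_{L_t^\infty L_x^6}^3+\norm{v}_{L_t^\infty L_x^6}^3)$. Your write-up merges (i) and (ii) into one step and attributes the trick to the wrong place.

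Second, for an energy-critical problem a uniform $H^1$ bound plus local theory does \emph{not} by itself give global existence, because the local lifespan depends on the profile of the data, not just its norm. The paper's globalization (Section \ref{sec:global-scattering}) therefore runs the bootstrap only on $M$ and $E$ (Proposition \ref{prop:energy-bound}), then invokes the deterministic scattering theorem of \cite{CKSTT08Annals} (Lemma \ref{lem:energy-critical-classical}) to produce, at each time $t'$, a global solution $\wt w^{(t')}$ of the pure equation with $\norm{\wt w^{(t')}}_{X(\R)}\le C(E_0)$, and finally uses the long-time perturbation Lemma \ref{lem:long-time} on a partition $\R=\cup I_l$ with $\norm{v}_{Y(I_l)}\le\eta_2$ to transfer this $X$-bound to $w$. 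You mention perturbation theory, but your phrasing ``the bootstrap quantities cannot blow up, and the local theory extends $w$'' understates the essential role of the CKSTT black box; without it the argument does not close.
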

We will give the proof of Proposition \ref{prop:global-derterministic} in Sections \ref{sec:local}-\ref{sec:global-scattering}. Now we prove Theorem \ref{thm:global} in 3D case assuming that Proposition \ref{prop:global-derterministic} holds.
\begin{proof}[Proof of Theorem \ref{thm:global}]
Now, we only need to prove the $s<0$ case. In fact, in the case when $0<s<1$, the mass conservation law is available, so the proof of related result in Proposition \ref{prop:global-derterministic} is easier. 

For any $s<0$, by the Definition \ref{defn:randomization}, we can find a randomization $f^\om$ with the parameter $a$. Furthermore, $a$ and $s$ satisfy the assumption of Proposition \ref{prop:global-derterministic}. Let $N_0\in 2^\N$ to be defined later, and make a high-low frequency decomposition for the initial data
\EQ{
	u(t)=e^{it\De}P_{\goe N_0}f^\om + w(t),
}
then $w$ satisfies the equation \eqref{eq:nls-w} with
\EQ{
	u_0=f^\om\text{, }v_0=P_{\goe N_0}f^\om\text{, }w_0=P_{\loe N_0 }f^\om\text{, and }v=e^{it\De}P_{\goe N_0}f^\om.
}

By Corollary \ref{cor:Y-norm-Z-norm} and boundedness of the operator $P_{\goe N_0}$, we have
\EQn{\label{esti:omM-1}
\PP\brkb{\fbrkb{\om\in\Om:\norm{u_0}_{H_x^s}+ \norm{v}_{Y\cap Z(\R)}>\la}}\lsm e^{-C\la^2\norm{f}_{H_x^s}^{-2}}.
}
For any $p\goe 2$, by Lemmas \ref{lem:large-deviation} and \ref{lem:orthogonality},
\EQ{
	\norm{w_0}_{L_\om^p L_x^2}\lsm \sqrt{p} \norm{\bx_jP_{\loe N_0}f}_{L_x^2 l_j^2}\lsm\sqrt{p} \norm{P_{\loe N_0}f}_{L_x^2} \lsm \sqrt{p} N_0^{-s}\norm{f}_{H_x^s}.
}
Then, by Lemma \ref{lem:probability-estimate}, we have
\EQn{\label{esti:omM-2}
\PP\brkb{\fbrkb{\om\in\Om: N_0^s\norm{w_0}_{ L_x^2}>\la}}\lsm e^{-C\la^2\norm{f}_{H_x^s}^{-2}}.
}
For any $p\goe 2$, by Lemmas \ref{lem:large-deviation} and \ref{lem:orthogonality},
\EQ{
\norm{w_0}_{L_\om^p \dot H_x^1}
\lsm & \sqrt{p}\norm{\nabla P_{\loe N_0}\bx_jf}_{L_x^2 l_{k\in\N^+}^2 } \\
\lsm & \sqrt{p}\norm{\nabla P_{\loe N_0}f}_{L_x^2}\lsm \sqrt{p} N_0^{1-s} \norm{f}_{H_x^s}.
}
For any $p\goe 6$, since $s>-a$, by Minkowski's inequality, Lemmas \ref{lem:large-deviation}, \ref{lem:bernstein}, and  \ref{lem:orthogonality},
\EQ{
	\norm{u_0}_{L_\om^p L_x^6}\lsm \sqrt{p} \norm{\bx_jf}_{L_x^6 l_j^2}\lsm\sqrt{p} \norm{\jb{\nabla}^{-a}f}_{L_x^2} \lsm \sqrt{p} \norm{f}_{H_x^s}.
}
Note that $N_0$ only depends on $M$ and $\norm{f}_{H_x^s}$. Then, by Lemma \ref{lem:probability-estimate},
\EQn{\label{esti:omM-3}
\PP\brkb{\fbrkb{\om\in\Om: N_0^{s-1}\norm{w_0}_{ \dot H_x^1} + \norm{u_0}_{L_x^6}\goe\la}} \lsm e^{-C\la^2\norm{f}_{H_x^s}^{-2}}.
}

For any $M\goe 1$, let $\wt \Om_M$ be defined by
\EQn{\label{defn:omega-M}
\wt \Om_M=\Big\{\om\in\Om: &\norm{u_0}_{H_x^s} + \norm{v}_{ Y\cap Z(\R)} < M \norm{f}_{H_x^s},\\
&N_0^s\norm{w_0}_{ L_x^2} +N_0^{s-1}\norm{w_0}_{ \dot H_x^1} + \norm{u_0}_{L_x^6}<M\norm{f}_{H_x^s}\Big\}.
}
Therefore, by \eqref{esti:omM-1} and \eqref{esti:omM-2}, we have
\EQn{\label{Om-Mc}
\PP(\wt \Om_M^c) \lsm e^{-CM^2}.
}
For any $\om \in\wt \Om_M$, we have $\norm{v}_{Y\cap Z(\R)}< M \norm{f}_{H_x^s}$, 
\EQ{
M(0)\loe C N_0^{-2s} M^2 \norm{f}_{H_x^s}^2\text{, and }E(0)\loe C N_0^{2(1-s)}M^2\cdot\max\fbrkb{M^4\norm{f}_{H_x^s}^6,\norm{f}_{H_x^s}^2}.
}

Therefore, for any $M>1$ and any $\om\in\wt \Om_M$, let 
\EQ{
A=A(M,\norm{f}_{H_x^s}):=\max\fbrkb{CM \norm{f}_{H_x^s},CM^2\cdot\max\fbrko{M^4\norm{f}_{H_x^s}^6,\norm{f}_{H_x^s}^2}},
}
then we have for $v=e^{it\De}P_{\goe N_0}f^\om$,
\EQ{
\norm{u_0}_{H_x^s} +\norm{v}_{ Y\cap Z(\R)}\loe A\text{, }M(0)\loe AN_0^{-2s}\text{, and }E(0)\loe AN_0^{2(1-s)}.
} 
Therefore, we can apply Proposition \ref{prop:global-derterministic}. Let $N_0$ depend on $A$ as in the statement of Proposition \ref{prop:global-derterministic}, and we obtain a global solution $w$ that scatters. Then, for any $\om\in\wt \Om=\cup_{M>1}\wt \Om_M$, we can also derive that \eqref{eq:nls-w} admits a global solution $w$ that scatters. By \eqref{Om-Mc},  we have that  $\PP(\wt \Om)=1$. Then for almost every $\om\in\Om$, we obtain the global well-posedness and scattering for \eqref{eq:nls-w}. This finishes the proof of Theorem \ref{thm:global} in 3D case.
\end{proof}

\subsection{Local theory}\label{sec:local}
We define the space $X(I)$ as
\EQ{
\norm{w}_{X(I)}:=\norm{\jb{\nabla}w}_{L_t^2 L_x^{6}(I\times \R^3)} + \norm{w}_{L_{t,x}^8(I\times \R^3)} + \norm{w}_{L_t^8 L_x^{12}(I\times \R^3)}.
}
\begin{lem}[Local well-posedness]\label{lem:local}
Let $a\in\N$, $a> 10$, $\frac34-\frac14a<s<0$, $v\in Y\cap Z(\R)$, and $w_0\in H_x^1$. Then, there exists some $T>0$ depending on $a$, $w_0$, and $v$ such that there exists a unique solution $w$ of \eqref{eq:nls-w} in some $0$-neighbourhood of 
\EQ{
C([0,T];H_x^1(\R^3)) \cap X([0,T]).
}
\end{lem}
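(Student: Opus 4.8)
The plan is to prove local well-posedness for \eqref{eq:nls-w} by a standard contraction mapping argument on the Duhamel formulation
\EQ{
w(t) = e^{it\De}w_0 - i\int_0^t e^{i(t-s)\De}\brkb{|v+w|^4(v+w)}(s)\ds,
}
carried out in the space $\E_T := C([0,T];H_x^1(\R^3))\cap X([0,T])$ equipped with the norm $\norm{w}_{L_t^\I H_x^1([0,T])} + \norm{w}_{X([0,T])}$. First I would record, via the Strichartz estimate (Lemma \ref{lem:strichartz}), that the linear part obeys $\norm{e^{it\De}w_0}_{\E_T}\lsm \norm{w_0}_{H_x^1}$, and that the Duhamel term is controlled by $\norm{\jb{\nabla}\brkb{|u|^4u}}_{L_t^{2}L_x^{6/5}([0,T]\times\R^3)}$, since $(2,6/5)$ is dual to the $L^2$-admissible pair $(2,6)$; here it is essential that $X(I)$ contains the derivative norm $\norm{\jb{\nabla}w}_{L_t^2L_x^6}$ so that the fixed point lives in $H^1$.

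The core estimate is a multilinear bound: expanding $|u|^4u$ with $u=v+w$, every term is a product of five factors, each of which is $v$ or $w$, with one factor possibly carrying the derivative $\jb{\nabla}$. For the quintic-in-$w$ term one uses the fractional Leibniz rule plus Hölder to put $\jb{\nabla}w$ in $L_t^2L_x^6$ and the remaining four copies of $w$ in $L_t^\I L_x^6$ (controlled by $\norm{w}_{L_t^\I H_x^1}$ via Sobolev $H^1\hra L^6$), or alternatively distributing into $L_{t,x}^8$ and $L_t^8L_x^{12}$ norms of $X$; this yields a bound $\lsm \norm{w}_{\E_T}^5$ with a constant that can be made small by shrinking $T$ (the $L_t^2$ or $L_t^8$ time-norms supply a positive power of $T$). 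For terms involving at least one $v$, I would peel off the $v$ factors into the $v$-norms built into $Y(I)$ — namely $\norm{v}_{L_{t,x}^8}$, $\norm{v}_{L_t^8L_x^{12}}$, and when $v$ carries the derivative, $\norm{\jb{\nabla}^{s+\frac12a-}v}_{L_t^2L_x^\I}$ together with the observation that $s+\frac12a- > 1$ so that $\jb{\nabla}v$ is controlled — while the $w$ factors go into the $\E_T$-norms as before. In each case one extracts either a power of $T$ directly, or uses dominated convergence / absolute continuity of the integral to make the relevant $v$-norm over $[0,T]$ small as $T\to 0$; combined with the a priori finiteness $\norm{v}_{Y\cap Z(\R)}<\I$, this produces a contraction on a small ball, and uniqueness follows from the same difference estimate.

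The main obstacle is the low regularity $s<0$ of the data, which makes it impossible to treat $v$ as an $H^1$ object or to use $L^2$-based Strichartz norms for $v$; one must instead rely entirely on the smoothing already encoded in the $Y$-norm, in particular the gain $\jb{\nabla}^{s+\frac12a-}$, to handle the term in which the derivative lands on $v$. Bookkeeping which of the $2^5$ terms are "worst" — and verifying that in each the Hölder exponents close with room for a positive power of $T$ — is where the real work lies, but it is routine once one checks the exponent arithmetic $\frac{1}{2} = \frac{1}{6/5} - \frac{1}{2} - \cdots$ (scaling) against the available norms. I expect no difficulty of principle: the construction is purely perturbative and local in time, and all the needed linear estimates are already available in Lemmas \ref{lem:strichartz}–\ref{lem:bernstein} and Corollary \ref{cor:Y-norm-Z-norm}.
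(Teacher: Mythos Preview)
Your overall strategy --- contraction via Strichartz, controlling $v$-factors through the $Y$-norm, and handling $\jb{\nabla}v$ via the observation $s+\tfrac12a->1$ --- is correct and matches the paper. The genuine gap is in your smallness mechanism: the 3D quintic NLS is \emph{energy-critical}, so the H\"older splitting of $\jb{\nabla}(|w|^4w)$ leaves no room for a positive power of $T$. Concretely, with your dual norm $L_t^2L_x^{6/5}$ and the splitting $\norm{\jb{\nabla}w}_{L_t^2L_x^6}\norm{w}_{L_t^\I L_x^6}^4$ the time exponents add to exactly $\tfrac12$; with $L_t^1L_x^2$ and the splitting $\norm{\jb{\nabla}w}_{L_t^2L_x^6}\norm{w}_{L_t^8L_x^{12}}^4$ they add to exactly $1$. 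Either way there is nothing left, and your claim that ``the $L_t^2$ or $L_t^8$ time-norms supply a positive power of $T$'' is false. Without it, on the simple ball $\norm{w}_{L_t^\I H_x^1}+\norm{w}_X\le R$ with $R\sim\norm{w_0}_{H_x^1}$, the nonlinear bound is of size $R^5$ and the contraction fails for large data.

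The paper resolves this in the standard energy-critical way. Smallness is obtained not from a power of $T$ but from absolute continuity of the \emph{spacetime} norms: since $\norm{e^{it\De}w_0}_{X(\R)}$ and $\norm{v}_{Y(\R)}$ are finite, one chooses $T$ so that $\norm{e^{it\De}w_0}_{X([0,T])}+\norm{v}_{Y([0,T])}\le\de R$ with $\de$ small depending on $R$. The contraction is then run with the \emph{weighted} norm $\norm{w}_{L_t^\I H_x^1}+\de^{-1}\norm{w}_X$ on the ball of radius $4R$, and the Duhamel term is placed in $L_t^1L_x^2$ so that the splittings $\norm{\jb{\nabla}w}_{L_t^2L_x^6}\norm{u}_{L_t^8L_x^{12}}^4$ and $\norm{\jb{\nabla}v}_{L_t^2L_x^\I}\norm{u}_{L_{t,x}^8}^4$ put \emph{all five} factors into norms bounded by $\de R$, yielding $C(\de R)^5\le R$. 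You already invoke absolute continuity for the $v$-norms; apply the same reasoning to $e^{it\De}w_0$ in $X$, switch the Duhamel norm to $L_t^1L_x^2$, and replace the one-scale ball by this two-scale structure, and the argument goes through.
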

\begin{proof}
First, we make the choices of some parameters:
\enu{
\item Let $C_0$ be the constant such that
\EQ{
\norm{e^{it\De}w_0}_{L_t^\I(\R; H_x^1)\cap X(\R)}\loe C_0\norm{w_0}_{H_x^1}.
}
\item 
Define
\EQ{
R:=\max\fbrk{C_0\norm{w_0}_{H_x^1},1}.
}
\item 
Let $\de>0$ be some small constant such that $C\de^4 R^4\loe \frac12$.
\item 
Let $T>0$ satisfy the smallness condition
\EQ{
\norm{e^{it\De}w_0}_{X([0,T])} + \norm{v}_{Y([0,T])}\loe\de R.
}
}
Now, we define the working space as
\EQ{
B_{R,\de,T}:=\fbrk{w\in C([0,T];H_x^1):\norm{w}_{L_t^\I H_x^1([0,T]\times\R^3)} + \de^{-1}\norm{w}_{X([0,T])}\loe 4 R},
}
equipped with the norm
\EQ{
\norm{w}_{B_{R,\de,T}}:= \norm{w}_{L_t^\I H_x^1([0,T]\times\R^3)} + \de^{-1}\norm{w}_{X([0,T])}.
}
Take the solution map as
\EQ{
\Phi_{w_0,v}(w):=e^{it\De}w_0-i\int_0^t e^{i(t-s)\De}(|u|^4u)\ds.
}
Then, it suffices to prove that $\Phi_{w_0,v}$ is a contraction mapping on $B_{R,\de,T}$.

Next, we prove that for any $w\in B_{R,\de,T}$, $\Phi_{w_0,v}(w)\in B_{R,\de,T}$. By Lemma \ref{lem:strichartz} and H\"older's inequality,
\EQ{
\norm{\Phi_{w_0,v}(w)}_{L_t^\I H_x^1} \loe & C_0\norm{w_0}_{H_x^1} + C\norm{\nabla (|u|^4u)}_{L_t^1 L_x^2} + C\norm{|u|^4u}_{L_t^1 L_x^2} \\
\loe & R + C \norm{\nabla w u^4}_{L_t^1 L_x^2} + C \norm{\nabla v u^4}_{L_t^1 L_x^2} + C\norm{|u|^4u}_{L_t^1 L_x^2} \\
\loe & R + C\norm{\nabla w}_{L_t^2 L_x^6}\norm{u}_{L_t^8 L_x^{12}}^4 + C\norm{\nabla v}_{L_t^2 L_x^\I} \norm{u}_{L_{t,x}^8}^4 \\
&+ C\norm{w}_{L_t^2 L_x^6}\norm{u}_{L_t^8 L_x^{12}}^4 + C\norm{v}_{L_t^2 L_x^\I}\norm{u}_{L_{t,x}^8}^4.
}
Note that $v$ is high frequency part, then by the definition of $X$ and $Y$ norms,
\begin{gather}
\norm{\jb{\nabla} w}_{L_t^2 L_x^6} \loe \norm{w}_{X}\lsm \de R,\nonumber\\
\norm{\jb{\nabla} v}_{L_t^2 L_x^\I}\lsm \norm{v}_{Y}\lsm \de R,\nonumber\\
\norm{u}_{L_t^8 L_x^{12}} \loe \norm{w}_{L_t^8 L_x^{12}} + \norm{v}_{L_t^8 L_x^{12}}\loe \norm{w}_{X} + \norm{v}_{Y} \lsm \de R,\nonumber
\end{gather}
and
\EQ{
\norm{u}_{L_{t,x}^8} \loe \norm{w}_{L_{t,x}^8} + \norm{v}_{L_{t,x}^8}\loe \norm{w}_{X} + \norm{v}_{Y} \lsm \de R.
}
Therefore, by the choice of $\de$,
\EQn{\label{esti:local-bound-1}
\norm{\Phi_{w_0,v}(w)}_{L_t^\I H_x^1} \loe & R + C\de^5 R^5 \loe 2R.
}
Similar as above, we also have
\EQn{\label{esti:local-bound-2}
\norm{\Phi_{w_0,v}(w)}_{X} \loe & \norm{e^{it\De}w_0}_{H_x^1} + C\norm{\nabla (|u|^4u)}_{L_t^1 L_x^2} + C\norm{|u|^4u}_{L_t^1 L_x^2} \\
\loe & \de R + C\de^5 R^5 \loe2\de R.
}
Then, by \eqref{esti:local-bound-1} and \eqref{esti:local-bound-1},
\EQ{
\norm{\Phi_{w_0,v}(w)}_{B_{R,\de,T}} \loe \norm{\Phi_{w_0,v}(w)}_{L_t^\I H_x^1} + \de^{-1}\norm{\Phi_{w_0,v}(w)}_{X}\loe 4R.
}
This shows that $\Phi_{w_0,v}$  maps $B_{R,\de,T}$ into itself.

Next, we are going to prove that  $\Phi_{w_0,v}$ is a contraction mapping. Take $w_1$ and $w_2$ in $B_{R,\de,T}$. By Lemma \ref{lem:strichartz},
\EQn{\label{esti:local-difference-1}
&\norm{\Phi_{w_0,v}(w_1)-\Phi_{w_0,v}(w_2)}_{L_t^\I H_x^1\cap X}\\
\loe & C\normb{\jb{\nabla}(|w_1+v|^4(w_1+v) - |w_2+v|^4(w_2+v))}_{L_t^1 L_x^2}.
}
Note that we have an elementary inequality
\EQ{
||w_1+v|^4(w_1+v) - |w_2+v|^4(w_2+v)|\lsm|w_1-w_2|(|w_1|^4 + |w_2|^4 + |v|^4).
}
Then, by H\"older's inequality,
\EQn{\label{esti:local-difference-2}
&\normb{|w_1+v|^4(w_1+v) - |w_2+v|^4(w_2+v)}_{L_t^1 L_x^2}\\
\lsm & \norm{w_1-w_2}_{L_t^2 L_x^6} (\norm{w_1}_{L_t^8 L_x^{12}}^4 + \norm{w_2}_{L_t^8 L_x^{12}}^4 + \norm{v}_{L_t^8 L_x^{12}}^4)\\
\lsm & \de \norm{w_1-w_2}_{B_{R,\de,T}}\cdot \de^4 R^4.
}
Note that we also have
\EQ{
&|\nabla(|w_1+v|^4(w_1+v) -  |w_2+v|^4(w_2+v))|\\
\lsm& |\nabla(w_1-w_2)|(|w_1|^4  + |v|^4)
 + |w_1-w_2|(|\nabla w_2|+|\nabla v|)(|w_1|^4 + |w_2|^4 + |v|^4).
}
Then, by H\"older's inequality,
\EQn{\label{esti:local-difference-3}
&\normb{\nabla(|w_1+v|^4(w_1+v) -  |w_2+v|^4(w_2+v))}_{L_t^1 L_x^2}\\
\lsm & \norm{\nabla(w_1-w_2)}_{L_t^2 L_x^6}(\norm{w_1}_{L_t^8 L_x^{12}}^4 + \norm{v}_{L_t^8 L_x^{12}}^4) \\
& + \norm{w_1-w_2}_{L_t^8 L_x^{12}} \norm{\nabla w_2}_{L_t^2 L_x^6} (\norm{w_1}_{L_t^8 L_x^{12}}^4 + \norm{w_2}_{L_t^8 L_x^{12}}^4 + \norm{v}_{L_t^8 L_x^{12}}^4) \\
& + \norm{w_1-w_2}_{L_{t,x}^8} \norm{\nabla v}_{L_t^2 L_x^\I} (\norm{w_1}_{L_{t,x}^8}^4 + \norm{w_2}_{L_{t,x}^8}^4 + \norm{v}_{L_{t,x}^8}^4) \\
\lsm & \de^5 R^4 \norm{w_1-w_2}_{B_{R,\de,T}}.
}
Therefore, by \eqref{esti:local-difference-1}, \eqref{esti:local-difference-2}, and \eqref{esti:local-difference-3},
\EQn{\label{esti:local-difference-4}
\norm{\Phi_{w_0,v}(w_1)-\Phi_{w_0,v}(w_2)}_{L_t^\I H_x^1\cap X}\loe C \de^5 R^4 \norm{w_1-w_2}_{B_{R,\de,T}} .
}
Then, by \eqref{esti:local-difference-4} and the choice of $\de$,
\EQ{
\norm{\Phi_{w_0,v}(w_1)-\Phi_{w_0,v}(w_2)}_{B_{R,\de,T}}\loe & C \de^5 R^4 \norm{w_1-w_2}_{B_{R,\de,T}} + C \de^4 R^4 \norm{w_1-w_2}_{B_{R,\de,T}} \\
\loe & \frac12\norm{w_1-w_2}_{B_{R,\de,T}}.
}
This proves that $\Phi_{w_0,v}$ is a contraction mapping on $B_{R,\de,T}$.
\end{proof}

\subsection{Modified Interaction Morawetz}\label{sec:space-time}
We need to prove a perturbation version of interaction Morawetz estimate as follows.
\begin{lem}[Modified Interaction Morawetz]\label{lem:inter-mora}
Given $T>0$. Let $w\in C([0,T];H_x^1)$ be the solution of perturbation equation \eqref{eq:nls-w}. Then, we have
\EQn{\label{eq:inter-mora}
\norm{w}_{L_{t,x}^4}^4\lsm & \norm{w}_{L_t^\I L_x^2}^2 \norm{w}_{L_t^\I \dot H_x^{\frac12}}^2 \\
&+ \norm{v}_{L_t^2 L_x^\I} (\norm{w}_{L_{t,x}^4}^2 + \norm{v}_{L_{t,x}^4}^2)(\norm{w}_{L_t^\I L_x^6}^3 + \norm{v}_{L_t^\I L_x^6}^3) \norm{w}_{L_t^\I \dot H_x^{\frac12}}^2 \\
& + \norm{\nabla v}_{L_t^2 L_x^\I} (\norm{w}_{L_{t,x}^4}^2 + \norm{v}_{L_{t,x}^4}^2)(\norm{w}_{L_t^\I L_x^6}^3 + \norm{v}_{L_t^\I L_x^6}^3) \norm{w}_{L_t^\I L_x^2}^2,
}
where all the space-time norms are taken over $[0,T]\times \R^3$.
\end{lem}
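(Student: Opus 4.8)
The plan is to run the standard interaction Morawetz machinery for the Schrödinger equation, but applied to the perturbed equation \eqref{eq:nls-w} rather than to a genuine NLS solution, and to carefully track the extra ``error'' terms produced by the fact that $w$ solves $i\pd_t w + \De w = |u|^4 u = |w+v|^4(w+v)$ instead of $i\pd_t w + \De w = |w|^4 w$. Concretely, I would introduce the interaction Morawetz potential
\EQ{
\M(t) := \int_{\R^3}\int_{\R^3} |w(t,y)|^2 \frac{(x-y)}{|x-y|}\cdot 2\im\brkb{\bar w(t,x)\nabla w(t,x)} \dx\dy,
}
whose time derivative, after the usual integration by parts, produces (i) the positive term $\sim \norm{|\nabla|^{1/2}(|w|^2)}_{L^2_x}^2 \gsm \norm{w}_{L^4_x}^4$ from the kinetic part, (ii) the ``good'' defocusing contribution from the genuine nonlinear part $|w|^4 w$ which has a favorable sign and can be discarded, and (iii) error terms coming from the difference $|w+v|^4(w+v) - |w|^4 w$, which is $O(|v|(|w|^4+|v|^4))$ pointwise. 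Integrating in time over $[0,T]$ and using $|\M(t)|\lsm \norm{w}_{L^\I_t L^2_x}^2\norm{w}_{L^\I_t \dot H^{1/2}_x}^2$ gives the first term on the right of \eqref{eq:inter-mora}; the remaining terms must absorb the time-integral of the error contributions.

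The key structural point — and the place where the ``modified'' adjective matters — is how one handles the error terms containing a derivative. Naively, $\frac{d}{dt}\M$ contains a term of the schematic form $\int v\, w^4 \nabla\bar w\,\dx$ (and with $\nabla$ possibly falling on $v$, or on the weight). As emphasized in the sketch, the factor $\nabla w$ only admits an $L^2_x$ bound, which would force $L^8_x$ on the remaining $w^4$, but $w$ only enjoys $L^q_x$ control for $q\loe 6$. The remedy is to \emph{not} estimate $\int v w^4 \nabla\bar w\,\dx$ directly: instead, using the algebraic structure of the nonlinearity (writing $w^4\nabla\bar w$ in terms of $\nabla(w^5)$ up to combinatorial constants, or rather recognizing that the genuine-NLS Morawetz identity already contains $\nabla(|w|^4 w)\cdot(\text{weight})\cdot w$ in divergence form) one integrates by parts to transfer the derivative off of $w$ and onto $v$, reducing the problematic term to the harmless $\int |w|^4 w\, \nabla\bar v\,\dx$, which is controlled by $\norm{\nabla v}_{L^2_t L^\I_x}\norm{w}_{L^\I_t L^6_x}^3\norm{w}^2_{\dots}$ after Hölder. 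I would organize the error terms into two families according to whether the surviving derivative lands on $v$ with no weight derivative (giving the $\norm{\nabla v}_{L^2_t L^\I_x}$ term) or the derivative is absorbed by the Morawetz weight itself, which is bounded, leaving $v$ undifferentiated (giving the $\norm{v}_{L^2_t L^\I_x}$ term); in each case one then distributes the remaining five $u=w+v$ factors using $u=w+v$, Hölder in $x$ with exponents $(4,4,6,6,6)$-type splittings, and Hölder in $t$ pairing the $L^2_t$ norm of $v$ (or $\nabla v$) against $L^\I_t$ and $L^4_{t,x}$ norms of the rest.

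The main obstacle I expect is precisely the bookkeeping of the derivative-transfer step: one must verify that the integration by parts producing $\int |w|^4 w\,\nabla\bar v\,\dx$ is legitimate (no uncontrolled boundary-in-$x$ terms, which is fine since $w(t)\in H^1$), that every term generated by differentiating the weight $\frac{x-y}{|x-y|}$ or the $|w(y)|^2$ factor is either sign-definite, absorbable into the main $L^4_{t,x}$ term, or falls into one of the two claimed error families, and that the worst Hölder split genuinely closes with the stated norms — in particular that no term requires more than $L^6_x$ on any lone copy of $w$ and no term requires more than one power of $L^2_t$ on $v$. A secondary technical point is justifying the differentiation of $\M(t)$ and all integrations by parts rigorously at the $H^1$ regularity level, which is routine by a density/approximation argument (approximate $w_0$ in $H^1$ and $v$ suitably, or work with the Duhamel formulation) and which I would dispatch briefly. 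Once the identity and the classification of error terms are in hand, the estimate \eqref{eq:inter-mora} follows by collecting the bounds.
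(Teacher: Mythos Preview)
Your plan is essentially the paper's approach, and the overall architecture is right: interaction Morawetz potential for $w$, positivity of the main terms, $\sup_t|\M(t)|\lsm\norm{w}_{L^\infty_t L^2_x}^2\norm{w}_{L^\infty_t\dot H^{1/2}_x}^2$, error terms from $e=|u|^4u-|w|^4w$, and a $(4,4,6,6,6)$-type H\"older split on the five $u$-factors.

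Two points are imprecise and would cause trouble if executed as written. First, the derivative transfer is \emph{not} achieved by integrating by parts a term like $\int v\,w^4\nabla\bar w$ directly; doing so regenerates $\nabla w$ on the cross terms. The paper instead keeps $e$ unexpanded and writes
\EQ{
\re\brkb{e\,\nabla\bar w}=\re\brkb{|u|^4u\,\nabla\bar w}-\re\brkb{|w|^4w\,\nabla\bar w},
}
then substitutes $\nabla\bar w=\nabla\bar u-\nabla\bar v$ in the first piece and uses the exact identity $\re(|f|^4f\,\nabla\bar f)=\tfrac16\nabla|f|^6$ for $f=u$ and $f=w$. The surviving terms are $\tfrac16\nabla(|u|^6-|w|^6)$ (integrated by parts against the weight) and $-\re(|u|^4u\,\nabla\bar v)$; note the derivative lands on $v$ with prefactor $|u|^4u$, not $|w|^4w$ as you wrote. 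Second, when the derivative hits the weight you get $\nabla\!\cdot\!\tfrac{x-y}{|x-y|}=\tfrac{2}{|x-y|}$, which is singular rather than bounded; this pairs with $m(t,y)$ via Hardy's inequality to produce the $\norm{w}_{L^\infty_t\dot H^{1/2}_x}^2$ factor, which is also how remainders \eqref{esti:inter-mora-remainder-1} and \eqref{esti:inter-mora-remainder-3} are closed. With these two corrections your outline matches the paper's proof.
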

\begin{proof}
Recall that $w$ satisfies
\EQ{
i\pd_t w + \De w=|w|^4w+e,
}
where we denote $e:=|u|^4u-|w|^4w$.
Denote that 
$$
m(t,x)=\frac12|w(t,x)|^2;\quad 
p(t,x)=\frac12\im \brko{\wb w(t,x)\nabla w(t,x)}.
$$
Then, we have
\EQn{\label{eq:local-mass-flow}
\pd_tm=-2\nabla\cdot p+ \im\brk{e\bar w},
}
and
\EQn{\label{eq:local-momentum-flow}
\pd_tp = & -\re \nabla\cdot \brko{\nabla \wb w \nabla w}  -\frac16 \nabla \brkb{|w|^6} + \half 1 \nabla \De m
+ \re \brk{\wb e\nabla w}-\frac12\re\nabla\brk{\wb w e}.
}
Moreover, we note that 
$$
\partial_j\Big(\frac{x_k}{|x|}\Big)=\frac{\delta_{jk}}{|x|}-\frac{x_jx_k}{|x|^3};\quad
\nabla\cdot \frac{x}{|x|}=\frac2{|x|};\quad \De\nabla\cdot \frac{x}{|x|}=\delta(x).
$$
Let 
\EQ{
M(t):= \int\!\!\int_{\R^{3+3}} \frac{x-y}{|x-y|}\cdot p(t,x)\> m(t,y)\dx\dy,
}
then by \eqref{eq:local-mass-flow} and \eqref{eq:local-momentum-flow}, we have the interaction Morawetz identity
\begin{subequations}\label{4.18-a-f}
\EQnn{
 \pd_t M(t)
= &\iint_{\R^{3+3}} \frac{x-y}{|x-y|}\cdot \partial_t p(t,x)\> m(t,y)\dx\dy\nonumber\\
& \quad + \iint_{\R^{3+3}} \frac{x-y}{|x-y|}\cdot p(t,x)\> \partial_t m(t,y)\dx\dy\nonumber\\
= &\iint_{\R^{3+3}} \frac{x-y}{|x-y|}\cdot \Big( -\re \nabla\cdot \brko{\nabla \wb w \nabla w}  -\frac16 \nabla \brkb{|w|^6}\Big)(t,x)\> m(t,y)\dx\dy\label{esti:inter-mora-angular-1}\\
& \quad -2\iint_{\R^{3+3}} \frac{x-y}{|x-y|}\cdot p(t,x)\> \nabla\cdot p(t,y)\dx\dy\label{esti:inter-mora-angular-2}\\
&\quad +\frac12\iint_{\R^{3+3}} \frac{x-y}{|x-y|}\cdot \nabla \De m(t,x)\> m(t,y)\dx\dy\label{esti:inter-mora-angular-3}\\
&\quad +\iint_{\R^{3+3}} \frac{x-y}{|x-y|}\cdot  p(t,x)\>  \im\brk{e\bar w}(t,y)\dx\dy\label{esti:inter-mora-remainder-1}\\
& \quad + \iint_{\R^{3+3}} \frac{x-y}{|x-y|}\cdot \re \brk{\wb e\nabla w}(t,x)\> m(t,y)\dx\dy\label{esti:inter-mora-remainder-2}\\
& \quad + \iint_{\R^{3+3}} \frac1{|x-y|}\cdot \re \brk{\wb e w}(t,x)\> m(t,y)\dx\dy.\label{esti:inter-mora-remainder-3}
}
\end{subequations}
Note that by the classical argument in \cite{CKSTT04CPAM}, we have
\EQ{
\eqref{esti:inter-mora-angular-1} + \eqref{esti:inter-mora-angular-2} \goe 0,
}
and
\EQ{
\eqref{esti:inter-mora-angular-3}\gsm \norm{w(t)}_{L_x^4}^4.
}
Moreover,
\EQ{
\sup_{t\in[0,T]} M(t)\lsm \norm{w}_{L_t^\I L_x^2}^2 \norm{w}_{L_t^\I \dot H_x^{\frac12}}^2.
}
Then, integrating over $[0,T]$, it holds that
\EQ{
C\norm{w}_{L_{t,x}^4}^4\loe M(T)-M(0)+ \int_0^T |\eqref{esti:inter-mora-remainder-1}| + |\eqref{esti:inter-mora-remainder-2}| +|\eqref{esti:inter-mora-remainder-3}| \dt,
}
thus
\EQn{\label{esti:inter-mora-bound-1}
\norm{w}_{L_{t,x}^4}^4\lsm \norm{w}_{L_t^\I L_x^2}^2 \norm{w}_{L_t^\I \dot H_x^{\frac12}}^2+ \int_0^T |\eqref{esti:inter-mora-remainder-1}| + |\eqref{esti:inter-mora-remainder-2}| +|\eqref{esti:inter-mora-remainder-3}|\dt.
}
Next, we estimate the terms containing \eqref{esti:inter-mora-remainder-1}, \eqref{esti:inter-mora-remainder-2}, and \eqref{esti:inter-mora-remainder-3}. We first consider \eqref{esti:inter-mora-remainder-1}. By H\"older's inequality,
\EQ{ 
\int_0^T |\eqref{esti:inter-mora-remainder-1}| \dt 
\lsm & \int_0^T \absb{\iint_{\R^{3+3}} \frac{x-y}{|x-y|}\cdot  p(t,x)\>  \im\brk{e\bar w}(t,y)\dx\dy} \dt \\
\lsm & \int_0^T \absb{\int_{\R^{3}}   \im\brk{e\bar w}(t,y)\dy} \>  \sup_{y}\absb{\int_{\R^3}\frac{x-y}{|x-y|}\cdot  p(t,x)\dx} \dt\\
\lsm &\|e\|_{L_t^1L^\frac65_x}\| w\|_{L_t^\I L^6_x} \norm{w}_{L_t^\I \dot H_x^{\frac12}}^2.
}
Note that 
$$
\|e\|_{L_t^1L^\frac65_x}\lesssim \|v\|_{L^2_t L^\infty_x} (\norm{w}_{L_{t,x}^4}^2 + \norm{v}_{L_{t,x}^4}^2)(\norm{w}_{L_t^\I L_x^6}^2 + \norm{v}_{L_t^\I L_x^6}^2).
$$
Hence, we get that 
\eqref{esti:inter-mora-remainder-3}. We first consider \eqref{esti:inter-mora-remainder-1}. By H\"older's inequality,
\EQ{ 
\int_0^T |\eqref{esti:inter-mora-remainder-1}| \dt 
\lesssim 
\|v\|_{L^2_t L^\infty_x} (\norm{w}_{L_{t,x}^4}^2 + \norm{v}_{L_{t,x}^4}^2)(\norm{w}_{L_t^\I L_x^6}^3 + \norm{v}_{L_t^\I L_x^6}^3)
\norm{w}_{L_t^\I \dot H_x^{\frac12}}^2.
}

We then consider \eqref{esti:inter-mora-remainder-2}, where we need to modify the Morawetz estimate:
\EQ{
\int_0^T |\eqref{esti:inter-mora-remainder-2}| \dt \lsm & \int_0^T \absb{\iint_{\R^{3+3}} \frac{x-y}{|x-y|}\cdot \re \brk{(|u|^4u-|w|^4w)\nabla \wb w}(t,x)\> m(t,y)\dx\dy} \dt.
}
We note that
\EQ{
(|u|^4u-|w|^4w)\nabla \wb w(t,x) = vw^4\nabla \wb w(t,x) + \text{other terms}.
}
However,  it is difficult to estimate the piece $vw^4\nabla \wb w(t,x)$. To this end, we need the following equality;
\EQn{\label{Mor-Mainterm}
&\int_{\R^3} \frac{x-y}{|x-y|}\cdot \re \brk{(|u|^4u-|w|^4w)\nabla \wb w}(t,x) \dx \\
= & -\frac13\int_{\R^3}  \frac{1}{|x-y|}\brk{|u(t,x)|^6-|w(t,x)|^6}\dx - \int_{\R^3}  \frac{x-y}{|x-y|}\cdot \re \brk{|u|^4u\nabla \wb v}(t,x)\dx.
}
Indeed, 
\EQ{
&\int_{\R^3} \frac{x-y}{|x-y|}\cdot \re \brk{(|u|^4u-|w|^4w)\nabla \wb w}(t,x) \dx \\
=& \int_{\R^3}  \frac{x-y}{|x-y|}\cdot \re \brk{|u|^4u\nabla \wb w}(t,x) \dx - \int_{\R^3}  \frac{x-y}{|x-y|}\cdot \re \brk{|w|^4w\nabla \wb w}(t,x) \dx\\
= & \int_{\R^3}  \frac{x-y}{|x-y|}\cdot \re \brk{|u|^4u\nabla (\wb u-\wb v)}(t,x)\dx - \frac16\int_{\R^3}  \frac{x-y}{|x-y|}\cdot  \nabla\brk{|w(t,x)|^6}\dx \\
=& \frac16\int_{\R^3}  \frac{x-y}{|x-y|}\cdot  \nabla\brk{|u(t,x)|^6-|w(t,x)|^6}\dx - \int_{\R^3}  \frac{x-y}{|x-y|}\cdot \re \brk{|u|^4u\nabla \wb v}(t,x)\dx \\
= & -\frac13\int_{\R^3}  \frac{1}{|x-y|}\brk{|u(t,x)|^6-|w(t,x)|^6}\dx - \int_{\R^3}  \frac{x-y}{|x-y|}\cdot \re \brk{|u|^4u\nabla \wb v}(t,x)\dx.
}
This gives \eqref{Mor-Mainterm}. 
Therefore, by \eqref{Mor-Mainterm} and Lemma \ref{lem:hardy}, we have
\EQ{ 
\int_0^T |\eqref{esti:inter-mora-remainder-2}| \dt \lsm & \int_0^T \absb{\iint_{\R^{3+3}} \frac{1}{|x-y|}\brk{|u(t,x)|^6-|w(t,x)|^6}\> m(t,y)\dx\dy} \dt \\
& + \int_0^T \absb{\iint_{\R^{3+3}} \frac{x-y}{|x-y|}\cdot \re \brk{|u|^4u\nabla \wb v}(t,x)\> m(t,y)\dx\dy} \dt\\
\lsm & \int_0^T \absb{\int_{\R^{3}} \brk{|u(t,x)|^6-|w(t,x)|^6}\dx} \sup_{x}\normb{\frac{1}{|x-\cdot|^{\frac12}}w(t,\cdot)}_{L_y^2}^2 \dt \\
& + \int_0^T \int_{\R^{3}}  \absb{|u|^4u\nabla \wb v(t,x)}\dx\norm{w(t)}_{L_y^2}^2 \dt\\
\lsm & \norm{v}_{L_t^2 L_x^\I} (\norm{w}_{L_{t,x}^4}^2 + \norm{v}_{L_{t,x}^4}^2)(\norm{w}_{L_t^\I L_x^6}^3 + \norm{v}_{L_t^\I L_x^6}^3) \norm{w}_{L_t^\I \dot  H_x^{\frac12}}^2 \\
& + \norm{\nabla v}_{L_t^2 L_x^\I} (\norm{w}_{L_{t,x}^4}^2 + \norm{v}_{L_{t,x}^4}^2)(\norm{w}_{L_t^\I L_x^6}^3 + \norm{v}_{L_t^\I L_x^6}^3) \norm{w}_{L_t^\I L_x^2}^2.
}

Now, we consider the term  \eqref{esti:inter-mora-remainder-3}. Treated similarly as above, we have that 
\EQ{
	\int_0^T |\eqref{esti:inter-mora-remainder-3}| \dt \lsm & \int_0^T \absb{\iint_{\R^{3+3}} \frac{1}{|x-y|} e\wb w(t,x)\> m(t,y)\dx\dy} \dt \\
	\lsm & \int_0^T \int_{\R^{3}} \abs{e\wb w(t,x)}\dx\> \sup_{x}\normb{\frac{1}{|x-\cdot|^{\frac12}}w(t,\cdot)}_{L_y^2}^2 \dt \\
	\lsm & \norm{v}_{L_t^2 L_x^\I} (\norm{w}_{L_{t,x}^4}^2 + \norm{v}_{L_{t,x}^4}^2)(\norm{w}_{L_t^\I L_x^6}^3 + \norm{v}_{L_t^\I L_x^6}^3) \norm{w}_{L_t^\I \dot  H_x^{\frac12}}^2.
}

Combining the findings on \eqref{4.18-a-f}, we have that 
\EQn{\label{esti:inter-mora-remainder-bound}
&\int_0^T|\eqref{esti:inter-mora-remainder-1}| + |\eqref{esti:inter-mora-remainder-2}| +|\eqref{esti:inter-mora-remainder-3}|\dt\\
\lsm & \norm{v}_{L_t^2 L_x^\I} (\norm{w}_{L_{t,x}^4}^2 + \norm{v}_{L_{t,x}^4}^2)(\norm{w}_{L_t^\I L_x^6}^3 + \norm{v}_{L_t^\I L_x^6}^3) \norm{w}_{L_t^\I \dot H_x^{\frac12}}^2 \\
& + \norm{\nabla v}_{L_t^2 L_x^\I} (\norm{w}_{L_{t,x}^4}^2 + \norm{v}_{L_{t,x}^4}^2)(\norm{w}_{L_t^\I L_x^6}^3 + \norm{v}_{L_t^\I L_x^6}^3) \norm{w}_{L_t^\I L_x^2}^2.
}

Therefore, by \eqref{esti:inter-mora-bound-1} and \eqref{esti:inter-mora-remainder-bound},
\EQ{
\norm{w}_{L_{t,x}^4}^4\lsm & \norm{w}_{L_t^\I L_x^2}^2 \norm{w}_{L_t^\I \dot H_x^{\frac12}}^2 \\
&+ \norm{v}_{L_t^2 L_x^\I} (\norm{w}_{L_{t,x}^4}^2 + \norm{v}_{L_{t,x}^4}^2)(\norm{w}_{L_t^\I L_x^6}^3 + \norm{v}_{L_t^\I L_x^6}^3) \norm{w}_{L_t^\I \dot H_x^{\frac12}}^2 \\
& + \norm{\nabla v}_{L_t^2 L_x^\I} (\norm{w}_{L_{t,x}^4}^2 + \norm{v}_{L_{t,x}^4}^2)(\norm{w}_{L_t^\I L_x^6}^3 + \norm{v}_{L_t^\I L_x^6}^3) \norm{w}_{L_t^\I L_x^2}^2.
}
This completes the proof of this lemma.
\end{proof}

\subsection{Almost conservation law}\label{sec:energy-bound}
Our main result in this subsection is 
\begin{prop}\label{prop:energy-bound}
Let $a\in\N$, $a> 10$, $\frac34-\frac14a<s<0$, $A>0$, $v=e^{it\De}v_0\in  Y\cap Z(\R)$ and $w$ be the solution of \eqref{eq:nls-w}. Take some $T>0$ such that $w\in C([0,T];H_x^1)$. Then, there exists $N_0=N_0(A)\gg1$ with the following properties. Assume that $\wh v_0$ is supported on $\fbrk{\xi\in\R^3:|\xi|\goe \frac12 N_0}$,
\EQ{
	\norm{u_0}_{H_x^s}+\norm{v}_{Y\cap Z(\R)}\loe A\text{, }M(0)\loe AN_0^{-2s}\text{, and }E(0) \loe A N_0^{2(1-s)}.
}
Then, we have
\EQn{\label{eq:energy-bound}
\sup_{t\in[0,T]}M(t)\loe 2A N_0^{-2s}\text{, and }\sup_{t\in[0,T]}E(t)\loe 2A N_0^{2(1-s)}.
}
\end{prop}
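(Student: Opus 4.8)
First I would fix the parameter $A$, set up the hypothesis on $N_0$ to be chosen large depending only on $A$, and define the bootstrap set
\[
J=\Big\{\,\tau\in[0,T]:\ \sup_{t\in[0,\tau]}M(t)\le 2AN_0^{-2s},\ \ \sup_{t\in[0,\tau]}E(t)\le 2AN_0^{2(1-s)}\Big\}.
\]
Since $w\in C([0,T];H_x^1)$, the maps $t\mapsto M(t)$ and $t\mapsto E(t)$ are continuous, so $J$ is closed, nonempty (it contains $0$ by the assumption $M(0)\le AN_0^{-2s}$, $E(0)\le AN_0^{2(1-s)}$), and it suffices to show $J$ is open, i.e. that on any subinterval where the bootstrap hypothesis holds one actually recovers the strict bounds $\sup_t M(t)\le \tfrac32 AN_0^{-2s}$ and $\sup_t E(t)\le \tfrac32 AN_0^{2(1-s)}$.

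\textbf{Mass increment.} On $[0,\tau]$ with $\tau\in J$, differentiating $M(t)=\|w(t)\|_{L_x^2}^2$ gives $\partial_t M=\int_{\R^3} O\big(w^4 v\cdot \bar w\big)\,dx$ plus similar terms, so
\[
M(t)\le M(0)+\int_0^\tau\Big|\int_{\R^3} O\big(v\,|w|^4 w\big)\,dx\Big|\,dt.
\]
I would bound the integrand by H\"older, using the $Y$-norm control of $v$ (in particular $\|v\|_{L_t^8L_x^{12}}$, $\|v\|_{L_{t,x}^8}$, or the $\|\jb{\nabla}^{s+\frac12 a-}v\|_{L_t^2L_x^\I}$ piece to absorb the derivative loss) together with $\|w\|_{L_t^\I L_x^6}\lesssim \|w\|_{L_t^\I\dot H^1}^{?}$ via Sobolev and $L_x^2$-interpolation, and $\|w\|_{L_t^\I L_x^2}^{?}$. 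The key structural point, flagged in the sketch, is that the bad cubic growth of the increment (in $d=3$) is tamed because $v$ is supported at frequencies $\gtrsim N_0$, so the $Y\cap Z$-norm of $v$ carries a gain in a negative power of $N_0$ (coming from $N_0^{s-1+\frac32 a}$ type factors vs. the admissible exponent), and one arranges that against the $E$-bound $N_0^{2(1-s)}$ to produce an overall factor $N_0^{-\epsilon}$; then choosing $N_0=N_0(A)$ large closes it.

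\textbf{Energy increment.} Differentiating the modified energy \eqref{defn:energy-w}: since $w$ solves $i\partial_t w+\Delta w=|u|^4u$, a direct computation yields $\partial_t E(t) = -\re\int_{\R^3}\Delta v\,|w|^4\bar w\,dx$ plus terms that are either zero or lower order, i.e. the main contribution is $\int_I\int_{\R^3}\Delta v\,|w|^4\bar w\,dx\,dt$ (this is the term highlighted in the sketch). I would estimate it by the three-factor H\"older inequality from the sketch,
\[
\Big|\int_I\!\int_{\R^3}\Delta v\,|w|^4\bar w\Big|
\lesssim \|\Delta v\|_{L_t^2L_x^\I}\,\big\||\nabla|^{-\frac{d-3}{4}}w\big\|_{L_{t,x}^4}^2\,\big\||\nabla|^{\frac{d-3}{2}}(w^{\frac{6-d}{d-2}})\big\|_{L_t^\I L_x^2},
\]
which in $3$D means $\|\Delta v\|_{L_t^2L_x^\I}\,\|w\|_{L_{t,x}^4}^2\,\|w\|_{L_t^\I L_x^6}^3$. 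Here $\|w\|_{L_{t,x}^4}^2$ is controlled by Lemma~\ref{lem:inter-mora} (the modified interaction Morawetz estimate), whose right side is in turn bounded by $\|w\|_{L_t^\I L_x^2}$, $\|w\|_{L_t^\I\dot H^{1/2}}$, $\|w\|_{L_t^\I L_x^6}$ and $Y$-norms of $v$ — all of which the bootstrap hypothesis and interpolation between $M(t)$ and $E(t)$ bound by explicit powers of $N_0$ — while $\|\Delta v\|_{L_t^2L_x^\I}$ is finite and carries the crucial negative power of $N_0$ from the high-frequency support of $\wh v_0$ and the $Z$-norm gain $\jb{\nabla}^{s+\frac32 a-}$. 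Collecting exponents, the total energy increment is $\le C(A)\,N_0^{2(1-s)}\cdot N_0^{-\epsilon}$ for some $\epsilon>0$, so taking $N_0(A)$ large enough gives $\sup_t E(t)\le E(0)+\tfrac12 AN_0^{2(1-s)}\le \tfrac32 AN_0^{2(1-s)}$, and similarly for the mass.

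\textbf{Main obstacle.} The delicate part is the bookkeeping of powers of $N_0$: one must check that in \emph{every} term of both increments, the positive powers of $N_0$ generated by the a priori bounds $M\lesssim N_0^{-2s}$, $E\lesssim N_0^{2(1-s)}$ (and the Gagliardo–Nirenberg interpolations converting $\dot H^1$, $L^2$ data into $L^6$, $\dot H^{1/2}$, $L_{t,x}^4$ norms) are strictly outweighed by the negative power of $N_0$ coming from the smoothing/high-frequency localization of $v$ in the $Y\cap Z$ norms — and that this works uniformly for all $s$ in the stated range once $a>10$ and $a>3-4s$. The cubic (rather than quadratic) dependence on $w$ in $3$D is exactly what makes this margin tight, and verifying the inequality on exponents is where the real content of the proof lies; everything else is H\"older, Sobolev embedding, Hardy's inequality (Lemma~\ref{lem:hardy}), and the continuity/bootstrap formalism.
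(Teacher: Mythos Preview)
Your approach is essentially the paper's: bootstrap on $M$ and $E$, differentiate to get the mass and energy increments, estimate the energy increment by $\|\Delta v\|_{L_t^2L_x^\infty}\|w\|_{L_{t,x}^4}^2\|w\|_{L_t^\infty L_x^6}^3$, feed in the modified interaction Morawetz bound (Lemma~\ref{lem:inter-mora}) for $\|w\|_{L_{t,x}^4}$, and count powers of $N_0$. Two corrections to make it go through. First, the mass increment cannot be closed with only $L_t^\infty$ norms of $w$ as you list: schematically the term is $\int_0^\tau\!\int v\,w^5$, and pairing $v\in L_t^2L_x^\infty$ against $L_t^\infty$ norms of $w$ leaves an unbounded $\tau^{1/2}$; the paper uses exactly the same H\"older split as in the energy step, $\|v\|_{L_t^2L_x^\infty}\|w\|_{L_{t,x}^4}^2\|w\|_{L_t^\infty L_x^6}^3$, so the Morawetz $L_{t,x}^4$ bound is needed for the mass increment as well. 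Second, the smoothing for $\|\Delta v\|_{L_t^2L_x^\infty}$ comes from the $Y$-norm piece $\|\jb{\nabla}^{s+\frac12 a-}v\|_{L_t^2L_x^\infty}$ (giving $\|\Delta v\|_{L_t^2L_x^\infty}\lesssim N_0^{2-\frac12 a-s+}$), not from the $Z$-norm; this is exactly where the hypothesis $s>\tfrac34-\tfrac14 a$ enters, since both the mass and energy increments then carry a factor $N_0^{-\frac12 a+\frac32-2s+}$ which is $o(1)$ precisely under that condition.
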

\begin{proof}
Let $N_0=N_0(A)$ that will be defined later. We implement a bootstrap procedure on $I\subset [0,T]$: assume an a priori bound 
\EQn{\label{eq:bound-w-hypothesis}
\sup_{t\in I}M(t)\loe 2A N_0^{-2s}\text{, and }\sup_{t\in I} E(t)\loe 2AN_0^{2(1-s)},
} 
then it suffices to prove that
\EQn{\label{eq:bound-w-bootstrap}
\sup_{t\in I}M(t)\loe \frac32A N_0^{-2s}\text{, and }\sup_{t\in I} E(t)\loe \frac32AN_0^{2(1-s)}.
}
From now on, all the space-time norms are taken over $I\times\R^3$. 

To start with, we collect useful estimates on $I$. Now, we use the notation $C=C(A)$ for short, and  the implicit constants in ``$\lesssim$'' depend on $A$. By Lemma \ref{lem:GN-2}, we have $Z\subset L_t^\I L_x^6$, then
\EQn{\label{eq:bound-v-ltinfty}
\norm{v}_{L_t^\I L_x^6} + \norm{v}_{L_{t,x}^4}\lsm\norm{v}_{Y\cap Z}\lsm1. 
}
By the frequency support of $v$, we have for any $0\loe l<\frac12a+s$,
\EQn{\label{eq:bound-v-l2linfty}
	\normb{|\nabla|^lv}_{L_t^2 L_x^\I}\lsm N_0^{l-\frac12a-s+}\norm{v}_{ Y}\lsm N_0^{l-\frac12a-s+}\lsm 1.
}
Note that we assume $a>10$ and $s>\frac34-\frac14a$, then $\frac12 a + s > 2 $. Therefore, this guarantees that $\norm{\De v}_{L_t^2 L_x^\I} \lsm N_0^{2-\frac12a-s+}$ is allowed.
By bootstrap hypothesis \eqref{eq:bound-w-hypothesis},
\EQn{\label{eq:bound-w-h1}
\norm{w}_{L_t^\I L_x^2}\lsm N_0^{-s}\text{, }\norm{w}_{L_t^\I \dot H_x^1}\lsm N_0^{1-s}\text{, and } \norm{w}_{L_t^\I L_x^6}\lsm N_0^{\frac13(1-s)}.
}
Then, by interpolation and \eqref{eq:bound-w-h1}, we have for any $0\loe l \loe 1$, 
\EQn{\label{eq:bound-w-ltinfty}
\norm{w}_{L_t^\I \dot H_x^l}\lsm N_0^{l-s}.
}
Furthermore, by Lemma \ref{lem:inter-mora}, \eqref{eq:bound-w-ltinfty}, and \eqref{eq:bound-v-l2linfty},
\EQ{
	\norm{w}_{L_{t,x}^4}^4\lsm & \norm{w}_{L_t^\I L_x^2}^2 \norm{w}_{L_t^\I \dot H_x^{\frac12}}^2 \\
	&+ \norm{v}_{L_t^2 L_x^\I} (\norm{w}_{L_{t,x}^4}^2 + \norm{v}_{L_{t,x}^4}^2)(\norm{w}_{L_t^\I L_x^6}^3 + \norm{v}_{L_t^\I L_x^6}^3) \norm{w}_{L_t^\I \dot H_x^{\frac12}}^2 \\
	& + \norm{\nabla v}_{L_t^2 L_x^\I} (\norm{w}_{L_{t,x}^4}^2 + \norm{v}_{L_{t,x}^4}^2)(\norm{w}_{L_t^\I L_x^6}^3 + \norm{v}_{L_t^\I L_x^6}^3) \norm{w}_{L_t^\I L_x^2}^2\\
	\lsm & N_0^{1-4s} + N_0^{-\frac12a-s+} (\norm{w}_{L_{t,x}^4}^2 + 1) (N_0^{1-s}+1) N_0^{1-2s} \\
	&+ N_0^{1-\frac12a-s+} (\norm{w}_{L_{t,x}^4}^2 + 1) (N_0^{1-s}+1)N_0^{-2s}\\
	\lsm & N_0^{1-4s}+ N_0^{2-\frac12a-4s+} + N_0^{2-\frac12a-4s+}\norm{w}_{L_{t,x}^4}^2.
}
By $s>\frac34-\frac14 a$ and Young's inequality, 
\EQn{\label{eq:bound-w-ltx4}
\norm{w}_{L_{t,x}^4}^4\lsm N_0^{1-4s}+ N_0^{2-\frac12a-4s+} + N_0^{4-a-8s+} \lsm N_0^{1-4s}.
}

Now, we are prepared to give the proof of \eqref{eq:bound-w-bootstrap}.  To do this, we first need the following lemma. 
\begin{lem}\label{lem:conservation-law}
Assume that $w\in C(I; H_x^1(\R^3))$ solves \eqref{eq:nls-w}. Let $E(t)$ and $M(t)$ be defined as in \eqref{defn:energy-w} and \eqref{defn:mass-w}. Then, for any $t\in I$,
\EQ{
\absb{\frac{\mathrm{d}}{\mathrm{d}t}M(t)} \loe 2\absb{\int_{\R^3} \brkb{|u|^4u-|w|^4w} \wb w\dx}\text{, and  }\absb{\frac{\mathrm{d}}{\mathrm{d}t}E(t)} \loe \absb{\int_{\R^3} |u|^4u\De \wb v\dx}.
}
\end{lem}
\begin{proof}
First, by \eqref{eq:nls-w} and integration-by-parts,
\EQ{
\frac{\mathrm{d}}{\mathrm{d}t}\brk{\int\abs{w(t,x)}^2\dx} = & 2\re\int \wb w w_t\dx \\
= & 2\re i\int \wb w \brkb{\De w-|u|^4u}\dx \\
= & -2\re i\int \wb w \brkb{|u|^4u - |w|^4w}\dx.
}
Similarly by \eqref{eq:nls-w} and integration-by-parts,
\EQ{
	\frac{\mathrm{d}}{\mathrm{d}t}\brk{\half 1\int\abs{\nabla w(t,x)}^2\dx}=&-\Re\int \De w \wb{w}_t\dx\\
	=&\Re i\int w_t\wb{w}_t\dx-\Re\int |u|^4u\wb w_t\dx\\
	=&-\Re\int |u|^4u\wb u_t\dx +\Re\int |u|^4u\wb v_t\dx\\
	=&-\frac{\mathrm{d}}{\mathrm{d}t}\brk{\rev{6}\int|u|^{6}\dx}+\Re\int |u|^4u\wb v_t\dx,
}
then we have
\EQ{
	\frac{\mathrm{d}}{\mathrm{d}t}E(t) = \Re\int |u|^4u\wb v_t\dx=-\Re i\int |u|^4u\De \wb v\dx.
}
This finishes the proof of this lemma.
\end{proof}

Now we continue to prove the proposition. We first consider the mass bound in \eqref{eq:bound-w-bootstrap}. By Lemma \ref{lem:conservation-law} and H\"older's inequality, 
\EQ{
	\sup_{t\in I}M(t)\loe & M(0) + \int_I\absb{ \frac{\mathrm{d}}{\mathrm{d}t}M(t) }\dt\\
	\loe & M(0) + \int_I \absb{\int \wb w\brkb{|u|^4u-|w|^4w}\dx} \dt\\
	\loe & A N_0^{-2s} + C\norm{v}_{L_t^2 L_x^\I} \norm{w}_{L_{t,x}^4} (\norm{w}_{L_{t,x}^4} + \norm{v}_{L_{t,x}^4})(\norm{w}_{L_t^\I L_x^6}^3 + \norm{v}_{L_t^\I L_x^6}^3).
}
Therefore, combining with \eqref{eq:bound-v-ltinfty}, \eqref{eq:bound-v-l2linfty}, \eqref{eq:bound-w-ltinfty}, and \eqref{eq:bound-w-ltx4}, 
\EQn{\label{esti:mass-estimate}
	\sup_{t\in I}M(t) \loe & A N_0^{-2s} + C(A) N_0^{-\frac12a-s+\frac32-3s+} \loe \frac{3}{2}A N_0^{-2s},
}
where we take $N_0=N_0(A)$ such that $C(A)N_0^{-\frac12a+\frac32-2s+}\loe\frac12 A$. This is allowed since $s>\frac34-\frac14a$.

We then prove the energy bound in \eqref{eq:bound-w-bootstrap}. By Lemma \ref{lem:conservation-law} and H\"older's inequality, 
\EQ{
	\sup_{t\in I}E(t)\loe & E(0) + \int_I\absb{ \frac{\mathrm{d}}{\mathrm{d}t}E(t) }\dt\\
	\loe & E(0) + \int_I \absb{\int |u|^4u\cdot\De \wb v\dx} \dt\\
	\loe & A N_0^{2} + C\norm{\De v}_{L_t^2 L_x^\I} (\norm{w}_{L_{t,x}^4}^2 + \norm{v}_{L_{t,x}^4}^2)(\norm{w}_{L_t^\I L_x^6}^3 + \norm{v}_{L_t^\I L_x^6}^3).
}
Therefore, combining with \eqref{eq:bound-v-ltinfty}, \eqref{eq:bound-v-l2linfty}, \eqref{eq:bound-w-ltinfty}, and \eqref{eq:bound-w-ltx4}, 
\EQn{\label{esti:energy-estimate}
\sup_{t\in I}E(t) \loe & A N_0^{2(1-s)} + C(A) N_0^{2-\frac12a-s+\frac32-3s+} \loe \frac{3}{2}A N_0^{2(1-s)},
}
where we still need to take $N_0=N_0(A)$ such that $C(A)N_0^{-\frac12a+\frac32-2s+}\loe\frac12 A$. Therefore,  \eqref{esti:mass-estimate} and \eqref{esti:energy-estimate} gives \eqref{eq:bound-w-bootstrap}. This finishes the proof of this proposition.
\end{proof}

\subsection{Perturbations}\label{sec:perturbation}
Now, we consider the original energy critical equation:
\EQn{\label{eq:nls-w-original}
	\left\{ \aligned
	&i\pd_t \wt w + \De \wt w =  |\wt w|^4\wt w, \\
	& \wt w(0,x) = \wt w_0,
	\endaligned
	\right.
}
where $\wt w(t,x):\R\times \R^3\ra \C$. Let $g(t,x) := w(t,x)-\wt w(t,x)$. Then, the equation for $g$ is
\EQn{
	\label{eq:nls-w-difference}
	\left\{ \aligned
	&i\pd_t g + \De g =  F(g+v,\wt w), \\
	& g(0,x) = w_0 - \wt w_0.
	\endaligned
	\right.
}
Here we denote that
\EQ{
	F(g,w):=|g+w|^4 (g+w) - |w|^4 w.
}
Recall that
\EQ{
	\norm{w}_{X(I)}:=\norm{\jb{\nabla}w}_{L_t^2 L_x^{6}(I\times \R^3)} + \norm{w}_{L_{t,x}^8(I\times \R^3)} + \norm{w}_{L_t^8 L_x^{12}(I\times \R^3)},
}
and
\EQ{
	\norm{v}_{Y(I)}=& \normb{\jb{\nabla}^{s+\frac12a-}v}_{L_t^2 L_x^\I(I\times \R^3)} + \norm{v}_{L_{t,x}^8(I\times \R^3)} + \norm{v}_{L_{t,x}^4(I\times \R^3)} \\
	&+ \norm{v}_{L_t^8 L_x^{12}(I\times \R^3)}.
}
\begin{lem}\label{lem:short-time}
	Let $a\in\N$, $a> 10$, $\frac34-\frac14a<s<0$, $I\subset \R$, and $0\in I$. Then, there exists $0<\eta_1\ll1$ with the following properties. Let $\wt w\in C\brkb{I;H_x^1(\R^3)}$ be the solution of \eqref{eq:nls-w-original} on $I$, satisfying
	\EQ{
		\norm{w_0-\wt w_0}_{\dot H_x^1(\R^3)}\loe \eta_1\text{, and } \norm{\wt w}_{X(I)}\loe \eta_1.
	}
	For any $0<\eta\loe\eta_1$, suppose that
	\EQ{
		\norm{v}_{Y(I)}\loe \eta,
	}
	then there exists a solution $w\in C\brkb{I; H_x^1(\R^3)}$ of \eqref{eq:nls-w} with initial data $w_0$ such that
	\EQ{
		\norm{w-\wt w}_{L_t^\I  H_x^1(I\times\R^3)} + \norm{w-\wt w}_{X(I)} \loe C_0\brkb{\norm{w_0-\wt w_0}_{ H_x^1(\R^3)} + \eta},
	}
	where $C_0>1$ is an absolute constant independent of $\eta$, $\eta_1$ and $I$.
\end{lem}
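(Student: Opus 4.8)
The statement is a standard short-time stability (long-time perturbation) lemma for the equation \eqref{eq:nls-w}, viewed as a perturbation of the genuine energy-critical NLS \eqref{eq:nls-w-original}. The plan is to run a Strichartz-based contraction/continuity argument on the Duhamel formulation for $g = w - \wt w$, treating both $v$ (which is small in $Y$) and the difference of initial data as forcing terms, and absorbing the nonlinear interactions using the a priori smallness $\norm{\wt w}_{X(I)}\loe\eta_1$. Concretely, I would write $g = e^{it\De}(w_0-\wt w_0) - i\int_0^t e^{i(t-s)\De}F(g+v,\wt w)\,ds$, apply Lemma \ref{lem:strichartz} in the norm $\norm{\cdot}_{L_t^\I H_x^1\cap X(I)}$, and expand $F(g+v,\wt w)=|g+v+\wt w|^4(g+v+\wt w)-|\wt w|^4\wt w$ into a sum of quintic monomials, each containing at least one factor of $g$ or $v$. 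The point-wise bound
\EQ{
\absb{F(g+v,\wt w)} \lsm (\abs{g}+\abs{v})\brkb{\abs{g}^4+\abs{v}^4+\abs{\wt w}^4},
}
together with the analogous bound for $\nabla F$ (distributing one derivative and using the chain rule, exactly as in \eqref{esti:local-difference-3}), lets me estimate $\norm{\jb{\nabla}F(g+v,\wt w)}_{L_t^1 L_x^2}$ by Hölder in the $X$- and $Y$-norms — this is the same computation already carried out in the proof of Lemma \ref{lem:local}, now with $\norm{\wt w}_X$ and $\norm{v}_Y$ playing the role of small parameters.

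The execution would proceed in the order: (i) fix $\eta_1$ small depending only on the absolute constants from Lemma \ref{lem:strichartz}; (ii) set up the solution map $\Phi(g):=e^{it\De}(w_0-\wt w_0)-i\int_0^t e^{i(t-s)\De}F(g+v,\wt w)\,ds$ on a ball $\fbrk{\norm{g}_{L_t^\I H_x^1\cap X(I)}\loe C_0(\norm{w_0-\wt w_0}_{H_x^1}+\eta)}$; (iii) show $\Phi$ maps the ball into itself, using that every nonlinear term carries at least $\eta^4$ or $\eta_1^4$, so $\norm{\Phi(g)}\loe C\norm{w_0-\wt w_0}_{H_x^1}+C\eta+C(\eta_1^4+\eta^4)\norm{g}+C\eta\eta_1^4$, which is $\loe C_0(\cdots)$ once $\eta_1$ is small; (iv) show $\Phi$ is a contraction via the difference estimate, again identical in structure to \eqref{esti:local-difference-4}; (v) conclude existence of the fixed point $g$, hence of $w=\wt w+g\in C(I;H_x^1)$ solving \eqref{eq:nls-w}, and read off the claimed bound with $C_0>1$ absolute. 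One mild subtlety is the interplay of the inhomogeneous $X$-norm (which contains $\norm{\jb{\nabla}w}_{L_t^2L_x^6}$, hence low-frequency $L_t^2L_x^6$) with the fact that $v$ is purely high-frequency and $w_0$ need not be small in $L_x^2$; here one uses that only $\norm{w_0-\wt w_0}_{\dot H^1}$ is assumed small, and that the $L^2$-based part of $\norm{w_0-\wt w_0}_{H^1}$ appears only as the (allowed, non-small but finite) right-hand side factor, not inside the nonlinear estimates, which close purely on $X,Y$-norms.

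The main obstacle is bookkeeping rather than conceptual: one must verify that \emph{every} monomial in the expansion of $F(g+v,\wt w)$ and of $\nabla F$ can be placed in $L_t^1L_x^2$ with all factors distributed among the three available scale-invariant pairs $(L_{t,x}^8)$, $(L_t^8L_x^{12})$, $(L_t^2L_x^6$ for the derivative factor$)$ on the $X/\wt w$ side, and $(L_{t,x}^8)$, $(L_t^8L_x^{12})$, $(L_t^2L_x^\I$ for $\jb{\nabla}^{s+\frac12 a-}v)$ on the $Y$ side — in particular, the terms with the derivative landing on $v$ must be handled by pairing $\norm{\jb\nabla v}_{L_t^2L_x^\I}$ (controlled by $\norm{v}_Y$ since $a>10$, $s>\frac34-\frac14a$ forces $s+\frac12a->1$) with four $L_{t,x}^8$ factors. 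Since the required Hölder pairings and the elementary difference inequality $\big||z_1|^4z_1-|z_2|^4z_2\big|\lsm |z_1-z_2|(|z_1|^4+|z_2|^4)$ have all already appeared in Lemma \ref{lem:local}, no new analytic input is needed, and the proof is a routine but careful repetition with $\wt w$ and $v$ supplying the smallness; I would present it compactly by invoking those estimates rather than re-deriving them.
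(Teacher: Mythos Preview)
Your proposal is correct and follows essentially the same approach as the paper: write the Duhamel formula for $g=w-\wt w$, use the pointwise bounds on $F$ and $\nabla F$ together with H\"older to estimate $\norm{\jb{\nabla}F(g+v,\wt w)}_{L_t^1L_x^2}$ in terms of $\norm{g}_X$, $\norm{v}_Y$, $\norm{\wt w}_X$ (using that $s+\tfrac12a>1$ so $\norm{\jb{\nabla}v}_{L_t^2L_x^\infty}\lsm\norm{v}_Y$), and close by a smallness argument. The only cosmetic difference is that the paper derives the a~priori inequality for $\norm{g}_{L_t^\infty H_x^1\cap X}$ and invokes the standard continuity argument, whereas you set it up as a contraction mapping; the underlying estimates are identical.
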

\begin{proof}
	Let $g$ be the solution of \eqref{eq:nls-w-difference}, $0=\inf I$, and we restrict the time interval on $I$. Then, we have
	\EQ{
		g=e^{it\De}g(0)-i\int_0^te^{i(t-s)\De}F(g+v,\wt w)\ds.
	}
	Note that we have the pointwise estimate
	\EQ{
	\abs{F(g+v,\wt w)} = & \abs{|g+v+\wt w|^4(g+v+\wt w)-|\wt w|^4\wt w}\\
	\lsm & (|g| + |v|)(|g|^4 + |v|^4 + |\wt w|^4 ),
	}
and
	\EQ{
		\abs{\nabla F(g+v,\wt w)}=&\abs{\nabla \brk{|g+v+\wt w|^4(g+v+\wt w)-|\wt w|^4\wt w}}\\
		\lsm& \brk{\abs{\nabla g} + \abs{\nabla v}}\brk{|g|^4 + |v|^4 + |\wt w|^4} + \abs{\nabla \wt w}\brk{|g|^4 + |v|^4}\\
		& + \abs{\nabla \wt w}\brk{|g| + |v|}|\wt w|^3.
	}
Note that by the definition of $a$ and $s$, we have $s+\frac12 a>1$. Therefore, since $v$ is high-frequency,
\EQ{
\norm{v}_{L_t^2 L_x^\I}+ \norm{\nabla v}_{L_t^2 L_x^\I}\lsm \norm{v}_{Y}.
} 
Then, using the similar argument in Lemma \ref{lem:local},
\EQ{
\norm{F(g+v,\wt w)}_{L_t^1 L_x^2} \lsm & \norm{g}_{L_t^2 L_x^6}(\norm{g}_{L_t^8 L_x^{12}}^4 + \norm{v}_{L_t^8 L_x^{12}}^4 + \norm{\wt w}_{L_t^8 L_x^{12}}^4) \\
& + \norm{v}_{L_t^2 L_x^\I}(\norm{g}_{L_{t,x}^8}^4 + \norm{v}_{L_{t,x}^8}^4 + \norm{\wt w}_{L_{t,x}^8}^4) \\
\lsm & (\norm{g}_{X} + \norm{v}_{Y})(\norm{g}_{X}^4 + \norm{v}_{Y}^4 + \norm{\wt w}_{X}^4),
}
and
\EQ{
\norm{\nabla F(g+v,\wt w)}_{L_t^1 L_x^2} \lsm & \norm{\nabla g}_{L_t^2 L_x^6}(\norm{g}_{L_t^8 L_x^{12}}^4 + \norm{v}_{L_t^8 L_x^{12}}^4 + \norm{\wt w}_{L_t^8 L_x^{12}}^4) \\
& + \norm{\nabla v}_{L_t^2 L_x^\I}(\norm{g}_{L_{t,x}^8}^4 + \norm{v}_{L_{t,x}^8}^4 + \norm{\wt w}_{L_{t,x}^8}^4) \\
& +  \norm{\nabla \wt w}_{L_t^2 L_x^6}(\norm{g}_{L_t^8 L_x^{12}}^4 + \norm{v}_{L_t^8 L_x^{12}}^4) \\
& + \norm{\nabla \wt w}_{L_t^2 L_x^6}(\norm{g}_{L_t^8 L_x^{12}} + \norm{v}_{L_t^8 L_x^{12}})\norm{\wt w}_{L_t^8 L_x^{12}}^3 \\
\lsm & (\norm{g}_{X} + \norm{v}_{Y})(\norm{g}_{X}^4 + \norm{v}_{Y}^4 + \norm{\wt w}_{X}^4) \\
& + \norm{\wt w}_{X}(\norm{g}_{X}^4 + \norm{v}_{Y}^4 + \norm{g}_{X}\norm{\wt w}_{X}^3 + \norm{v}_{Y}\norm{\wt w}_{X}^3).
}
Therefore, by Lemma \ref{lem:strichartz},
	\EQ{
		\norm{g}_{L_t^\I H_x^1 \cap X} \lsm & \norm{g(0)}_{ H_x^1} + \norm{\jb{\nabla} F(g+v,\wt w)}_{L_t^1 L_x^2}  \\
		\lsm & \norm{g(0)}_{ H_x^1} + (\norm{g}_{X} + \norm{v}_{Y})(\norm{g}_{X}^4 + \norm{v}_{Y}^4 + \norm{\wt w}_{X}^4) \\
		& + \norm{\wt w}_{X}(\norm{g}_{X}^4 + \norm{v}_{Y}^4 + \norm{g}_{X}\norm{\wt w}_{X}^3 + \norm{v}_{Y}\norm{\wt w}_{X}^3) \\
		\lsm & \norm{g(0)}_{ H_x^1} + \brkb{\norm{g}_{X} + \eta}\brkb{\norm{g}_{X}^4 + \eta^4 + \eta_1^4} \\
		&+ \eta_1\brkb{\norm{g}_{X}^4 + \eta^4 + \norm{g}_{X}\eta_1^3 + \eta\eta_1^3} \\
		\lsm & \norm{g(0)}_{ H_x^1} + \norm{g}_{X}^5 + \eta_1^4\norm{g}_{X} + \eta\norm{g}_{X}^4 +\eta\eta_1^4 + \eta_1\norm{g}_{X}^4,
	}
	then we have
	\EQ{
		\norm{g}_{L_t^\I H_x^1 \cap S} \lsm & \norm{g(0)}_{ H_x^1} + \brkb{\norm{g}_{X}+\eta_1}\norm{g}_{X}^4 + \eta_1^4 \norm{g}_{X} + \eta_1^4\eta.
	}
	Then, this lemma follows by the standard continuity argument.
\end{proof}
\begin{lem}\label{lem:long-time}
	Suppose that $a\in\N$, $a> 10$, $\frac34-\frac14a<s<0$, $I\subset \R$, $M_0>1$, $I\subset\R$, and $0\in I$. Let $\wt w\in C\brkb{I;H_x^1(\R^3)}$ be the solution of \eqref{eq:nls-w-original} on $I$ with
	\EQ{
		\wt w_0=w_0\text{, and }\norm{\wt w}_{X(I)}\loe M_0.
	}
	Let $0<\eta_1\ll1$ and $C_0>1$ be defined as in Lemma \ref{lem:short-time}. Then, there exists $\eta_2=\eta_2(C_0,M_0,\eta_1)>0$ such that if $v$ satisfies
	\EQ{
		\norm{v}_{Y(I)}\loe \eta_2,
	}
	then there exists a solution $w\in C\brkb{I;\dot H_x^1(\R^3)}$ of \eqref{eq:nls-w} with initial data $w_0$ such that
	\EQn{\label{eq:long-time}
		\norm{w-\wt w}_{L_t^\I  H_x^1(I\times\R^3)} + \norm{w-\wt w}_{X(I)} \loe C(C_0,M_0,\eta_1)\eta_2.
	}
\end{lem}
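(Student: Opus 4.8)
The plan is to reduce the long-time stability to the short-time version, Lemma \ref{lem:short-time}, by partitioning $I$ into finitely many subintervals on each of which the $X$-norm of $\wt w$ is small, and then iterating. First I would use the hypothesis $\norm{\wt w}_{X(I)}\loe M_0$ together with absolute continuity of the norm to split $I=\cup_{k=0}^{J-1}I_k$ into $J=J(M_0,\eta_1)$ consecutive intervals $I_k=[t_k,t_{k+1}]$, ordered so that $t_0=\inf I$, with
\EQ{
\norm{\wt w}_{X(I_k)}\loe \eta_1
}
on each piece; the number $J$ is controlled because the $L^8_{t,x}$ and $L^8_tL^{12}_x$ pieces of the $X$-norm are $L^q_t$-based with $q<\I$, so they are absolutely continuous in $t$, and the $\jb{\nabla}w\in L^2_tL^6_x$ piece is handled the same way. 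One subtlety: to run Lemma \ref{lem:short-time} on $I_k$ we need $\wt w$ to solve \eqref{eq:nls-w-original} \emph{on $I_k$ with its own initial data $\wt w(t_k)$}, which it does by restriction, and we need the difference of data at time $t_k$ to be small in $\dot H^1_x$.

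The iteration then proceeds as follows. On $I_0$, apply Lemma \ref{lem:short-time} with $\eta=\eta_2$ (to be chosen) and initial difference $\norm{w_0-\wt w_0}_{\dot H^1}=0$, since $\wt w_0=w_0$; this gives a solution $w$ on $I_0$ with
\EQ{
\norm{w-\wt w}_{L_t^\I H_x^1(I_0\times\R^3)}+\norm{w-\wt w}_{X(I_0)}\loe C_0\eta_2.
}
In particular $\norm{w(t_1)-\wt w(t_1)}_{H^1_x}\loe C_0\eta_2$. For the inductive step, assuming $\norm{w(t_k)-\wt w(t_k)}_{H^1_x}\loe (2C_0)^k\eta_2=:\de_k$, I would apply Lemma \ref{lem:short-time} on $I_k$ provided $\de_k\loe\eta_1$ and $\norm{v}_{Y(I_k)}\loe\eta_1$ (both guaranteed by taking $\eta_2$ small depending on $C_0,J,\eta_1$, and using $\norm{v}_{Y(I_k)}\loe\norm{v}_{Y(I)}\loe\eta_2$); this yields
\EQ{
\norm{w-\wt w}_{L_t^\I H_x^1(I_k\times\R^3)}+\norm{w-\wt w}_{X(I_k)}\loe C_0(\de_k+\eta_2)\loe (2C_0)^{k+1}\eta_2,
}
and in particular bounds $\norm{w(t_{k+1})-\wt w(t_{k+1})}_{H^1_x}$ by $\de_{k+1}$, closing the induction. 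Since $J$ depends only on $M_0,\eta_1$, the final constant $(2C_0)^J$ depends only on $C_0,M_0,\eta_1$; choosing $\eta_2=\eta_2(C_0,M_0,\eta_1)$ so that $(2C_0)^J\eta_2\loe\eta_1$ validates every application of Lemma \ref{lem:short-time}. Summing the estimates over $k=0,\dots,J-1$ and using $\norm{w-\wt w}_{L_t^\I H^1_x(I\times\R^3)}\loe\sum_k\norm{w-\wt w}_{L_t^\I H^1_x(I_k\times\R^3)}$ (and similarly for $X$, whose constituent norms are $\ell^q$-summable over the subintervals, controlled by the $\ell^\I$ bound times $J$) gives \eqref{eq:long-time} with $C(C_0,M_0,\eta_1)=J\,(2C_0)^J$.

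The routine but slightly delicate point is the compatibility of the time-interval decomposition with the mixed space-time norms in $X$: one must check that finitely many pieces suffice, which uses that every component of $\norm{\cdot}_X$ is $L^q_t$ with $q<\I$ (indeed $q=2$ or $q=8$), hence dominated-convergence/absolute-continuity arguments apply; the number of intervals is then at most roughly $(M_0/\eta_1)^{q_{\max}}$. The main obstacle — though it is really an artifact of bookkeeping rather than a genuine difficulty — is ensuring the errors do not grow out of control across the iteration: the difference of initial data is amplified by a factor $C_0$ at each step, so one cannot simply ask $\norm{v}_{Y(I)}$ to be a fixed small number independent of $J$; instead $\eta_2$ must be taken exponentially small in $J$, and since $J$ itself depends only on $M_0$ and $\eta_1$ this is permissible and produces a constant of the stated form. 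Glueing the solutions $w$ on adjacent $I_k$ into a single $w\in C(I;\dot H^1_x)$ is immediate from uniqueness in the local theory (Lemma \ref{lem:local}) at the overlap points.
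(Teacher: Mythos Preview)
Your proposal is correct and follows essentially the same strategy as the paper's own proof: partition $I$ into $J=J(M_0,\eta_1)$ consecutive subintervals on which $\norm{\wt w}_{X(I_k)}\loe\eta_1$, choose $\eta_2$ so that $(2C_0)^J\eta_2\loe\eta_1$, apply Lemma \ref{lem:short-time} iteratively with the inductive bound $\norm{w-\wt w}_{L_t^\I H_x^1(I_k)\cap X(I_k)}\loe(2C_0)^k\eta_2$, and sum. The paper's argument is nearly identical, differing only in cosmetic indexing and in omitting the absolute-continuity justification for the decomposition that you spelled out.
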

\begin{proof}
	First, we divide the time interval $I$ as consecutive sub-intervals $I=\cup_{j=1}^J I_j$, such that
	\EQ{
		\half 1\eta_1\loe \norm{\wt w}_{X(I_j)}\loe \eta_1,
	}
	where $\eta_1$ is defined in Lemma \ref{lem:short-time}. Let $t_{j-1}=\inf I_j$ and assume without loss of generality that $t_0=0$. Then, we have
	\EQ{
		J=J(M_0,\eta_1).
	}
	From now on, we set another parameter $\eta_2=\eta_2(C_0,M_0,\eta_1)>0$ such that
	\EQn{\label{esti:long-time-eta2}
		(2C_0)^J\eta_2\loe \eta_1.
	}
	We take $v$ such that
	\EQ{
		\norm{v}_{S(I)}\loe \eta_2.
	}
	
	We start from the first interval $I_1$. In this case, $w(0)-\wt w(0)=0$. Then, applying Lemma \ref{lem:short-time} with $\eta=\eta_2$ on $I=[t_0,t_1]$, we obtain the existence of $w$ on $I_1$, and
	\EQn{\label{esti:long-time-I1}
		\norm{w-\wt w}_{L_t^\I  H_x^1(I_1\times\R^3)\cap X(I_1)} \loe C_0\eta\loe 2C_0\eta_2.
	}
	
	Now, we can start the induction procedure. Our aim is to prove for all $k=1,...,J$,
	\EQn{\label{esti:long-time-Ik}
		\norm{w-\wt w}_{L_t^\I  H_x^1(I_k\times\R^3)\cap X(I_k)} \loe (2C_0)^k\eta_2.
	}
	\eqref{esti:long-time-I1} shows that \eqref{esti:long-time-Ik} holds when $k=1$. We assume that \eqref{esti:long-time-Ik} holds for $k=j<J$, namely
	\EQn{\label{esti:long-time-Ij}
		\norm{w-\wt w}_{L_t^\I  H_x^1(I_j\times\R^3)\cap X(I_j)} \loe (2C_0)^j\eta_2.
	}
	It suffices to prove \eqref{esti:long-time-Ik} holds for $k=j+1$. By \eqref{esti:long-time-eta2} and \eqref{esti:long-time-Ij}, we have
	\EQ{
		\norm{w(t_j)-\wt w(t_j)}_{ H_x^1(\R^3)} \loe (2C_0)^j\eta_2\loe (2C_0)^J\eta_2\loe \eta_1.
	}
	Then, we can apply Lemma \ref{lem:short-time} with $\eta=\eta_2$ on $I_{j+1}$ after translation in $t$ starting from $t_j$, and obtain the existence of $w$ on $I_{j+1}$, and
	\EQ{
		\norm{w-\wt w}_{L_t^\I H_x^1(I_{j+1}\times\R^3)\cap X(I_{j+1})} \loe& C_0\brk{\norm{w(t_j)-\wt w(t_j)}_{ H_x^1(\R^3)} +\eta_2}\\
		\loe & C_0\brk{(2C_0)^j\eta_2 +\eta_2}\loe (2C_0)^{j+1}\eta_2.
	}
	This gives \eqref{esti:long-time-Ik} for $k=j+1$.
	
	Then, we have the existence of $w$ on $I$ and for all $k=1,...,J$,
	\EQ{
		\norm{w-\wt w}_{L_t^\I  H_x^1(I_k\times\R^3)\cap X(I_k)} \loe (2C_0)^k\eta_2.
	}
	Therefore, \eqref{eq:long-time} follows by summation over $k$.
\end{proof}

\subsection{Proof of Proposition \ref{prop:global-derterministic}}\label{sec:global-scattering}
We need to use the following classical result:
\begin{lem}\label{lem:energy-critical-classical}
	Suppose that $\wt w_0 \in H^1(\R^3)$. Then, the equation \eqref{eq:nls-w-original} is globally well-posed and scatters, and the solution $\wt w\in C\brko{\R; H^1(\R^3)}$ satisfies
	\EQ{
		\norm{\wt w}_{X(\R)} \loe C(\norm{\wt w_0}_{ H_x^1(\R^3)}).
	}
\end{lem}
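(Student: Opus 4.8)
\emph{Proof strategy.} This statement is essentially the celebrated global well-posedness and scattering theorem for the 3D defocusing energy-critical NLS of Colliander, Keel, Staffilani, Takaoka, and Tao \cite{CKSTT08Annals}; the only additional point is to upgrade the usual scattering-size space-time bound into control of the particular $X(\R)$-norm used here. The plan is as follows.

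First I would invoke \cite{CKSTT08Annals}: for any $\wt w_0\in\dot H_x^1(\R^3)$ the equation \eqref{eq:nls-w-original} admits a unique global solution $\wt w\in C\brk{\R;\dot H_x^1}$, which scatters, and which obeys the a priori bound
\EQ{
\norm{\wt w}_{L_{t,x}^{10}(\R\times\R^3)}\loe C\brk{\norm{\wt w_0}_{\dot H_x^1(\R^3)}}.
}
Since the conserved energy satisfies $E(\wt w_0)\lsm\norm{\wt w_0}_{\dot H_x^1}^2+\norm{\wt w_0}_{\dot H_x^1}^6$ by the Sobolev embedding $\dot H_x^1\hra L_x^6$, this global bound is finite and depends only on $\norm{\wt w_0}_{H_x^1}$. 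Moreover, since here $\wt w_0\in H^1$, mass is also conserved, $\norm{\wt w(t)}_{L_x^2}=\norm{\wt w_0}_{L_x^2}$ for all $t\in\R$, so $\norm{\wt w(t)}_{H_x^1}\lsm_{\norm{\wt w_0}_{H_x^1}}1$ uniformly in $t$.

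Next I would convert the $L_{t,x}^{10}$-bound into the $X(\R)$-bound by the standard subdivision-plus-Strichartz argument. Partition $\R$ into $J=J\brk{\norm{\wt w_0}_{H_x^1}}$ consecutive intervals $I_k$ on each of which $\norm{\wt w}_{L_{t,x}^{10}(I_k\times\R^3)}\loe\eta$ for a suitably small absolute constant $\eta$. On each $I_k$ one writes $\wt w$ through the Duhamel formula with data $\wt w(\inf I_k)$ and applies the Strichartz estimate of Lemma \ref{lem:strichartz}; estimating the nonlinearity $|\wt w|^4\wt w$ by H\"older's inequality and the fractional product rule — the derivative always landing on a single factor, with the other four factors placed in an $L_{t,x}^{10}$-type norm — one controls $\norm{\jb{\nabla}\wt w}_{L_t^2 L_x^6}+\norm{\wt w}_{L_{t,x}^8}+\norm{\wt w}_{L_t^8 L_x^{12}}$ on $I_k$ by $C\norm{\wt w(\inf I_k)}_{H_x^1}$, the smallness of $\eta$ being used to absorb the nonlinear contribution. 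Iterating over $k=1,\dots,J$ with the uniform bound $\norm{\wt w(\inf I_k)}_{H_x^1}\lsm_{\norm{\wt w_0}_{H_x^1}}1$ and summing the $J$ pieces yields
\EQ{
\norm{\wt w}_{X(\R)}\loe C\brk{\norm{\wt w_0}_{H_x^1(\R^3)}}.
}
Finally, scattering in $H^1_x$ (and not merely $\dot H^1_x$) follows from the finiteness of $\norm{\wt w}_{X(\R)}$, again through the Duhamel formula and Lemma \ref{lem:strichartz}.

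I do not expect a genuine obstacle here, since every ingredient is classical; the only mildly delicate point is the bookkeeping in the subdivision step, where one must check that the exponents defining the $X(I)$-norm are compatible with admissible Strichartz pairs and that the inhomogeneous derivative $\jb{\nabla}$ is handled correctly — this is precisely why mass conservation for $H^1$ data is used alongside the energy-critical $\dot H^1$ theory.
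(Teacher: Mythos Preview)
Your proposal is correct and follows essentially the same approach as the paper: invoke the $L_{t,x}^{10}$ bound from \cite{CKSTT08Annals}, partition $\R$ into finitely many intervals on which this norm is small, apply Strichartz and the Duhamel formula on each piece to bound the $X$-norm, and sum. The paper's proof is terser (it omits the mass-conservation remark and the explicit $H^1$ scattering step), but the argument is the same.
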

\begin{proof}
	Using the result in \cite{CKSTT08Annals}, we have the global well-posedness and the space-time bound
	\EQ{
		\norm{\wt w}_{L_{t,x}^{10}(\R)} \loe C(\norm{\wt w_0}_{ H_x^1(\R^3)}).
	}
	Then, given $0<\ep\ll 1$, we can split  $\R=\cup_{j=1}^J I_j$ such that
	\EQ{
		\norm{\wt w}_{L_{t,x}^{10}(I_j)} \loe \ep.
	}
	Using the equation \eqref{eq:nls-w-original},
	\EQ{
		\norm{\wt w}_{X(I_j)} \loe C(\norm{\wt w_0}_{ H_x^1(\R^3)}).
	}
	Therefore, the lemma follows by summation over $I_j$.
\end{proof}
\textbf{Proof of global well-posedness:} We first prove the global well-posedness and space-time norm estimate by iterating the perturbation theory. We consider only the forward time interval $[0,+\I)$ case. By Proposition \ref{prop:energy-bound}, we have that if $w\in C([0,T];H_x^1)$ solves \eqref{eq:nls-w} for some $T>0$, then
\EQn{\label{esti:energy-bound}
	\sup_{t\in[0,T]} \norm{w(t)}_{ H^1(\R^3)} \loe C(A,N_0)=:E_0.
}
Combining Lemma \ref{lem:energy-critical-classical}, it holds that for any $t'\in[0,\I)$, there exists a solution $\wt w(t,x) = \wt w^{(t')}(t,x)$ of
\EQ{
	\left\{ \aligned
	&i\pd_t \wt w + \De \wt w =  |\wt w|^4\wt w, \\
	& \wt w(t',x) = w(t',x),
	\endaligned
	\right.
}
such that
\EQn{\label{esti:spacetime-bound-fixedtime}
	\normb{\wt w ^{(t')}}_{L_t^\I H_x^1(\R)} + \normb{\wt w ^{(t')}}_{X(\R)} \loe C(\norm{w(t')}_{\dot H_x^1(\R^3)}).
}
By \eqref{esti:energy-bound} and \eqref{esti:spacetime-bound-fixedtime}, we have that if $w\in C([0,T];H_x^1)$ solves \eqref{eq:nls-w} for some $T>0$, then
\EQn{\label{esti:spacetime-bound}
	\sup_{t'\in[0,T]} \brkb{\normb{\wt w ^{(t')}}_{L_t^\I H_x^1(\R)} + \normb{\wt w ^{(t')}}_{X(\R)}} \loe C(E_0)=:M_0,
}
where $M_0$ depends only on $A$ and $N_0$.

By the assumption in Proposition \ref{prop:global-derterministic}, we have $\norm{v}_{Y([0,\I))}\loe A$. Next, we split $[0,\I)=\cup_{l=1}^L I_l$ such that
\EQ{
	\half 1 \eta_2\loe \norm{v}_{Y(I_l)} \loe \eta_2,
}
where $\eta_2=\eta_2(M_0)$ is defined in Lemma \ref{lem:long-time}. Then, $L$ may depend on $M_0$, $A$ and $\eta_2$. Let $s_{l-1}=\inf I_{l}$ and $s_0=0$. We can start from $I_1$. By \eqref{esti:spacetime-bound}, we have
\EQ{
	\normb{\wt w ^{(s_0)}}_{S(\R)} \loe M_0.
}
Then, we can apply Lemma \ref{lem:long-time} on $I_1$ to obtain the existence of $w\in C([s_0,s_1]; H^1(\R^d))$. Furthermore, by Proposition \ref{prop:energy-bound}, we can get
\EQ{
	\sup_{t\in I_1} \norm{w(t)}_{ H^1(\R^3)} \loe E_0\text{, and }\norm{w}_{X(I_1)}\loe M_0.
}
Particularly for $\wt w^{(s_1)}$, we have $\norm{\wt w^{(s_1)}(s_1)}_{ H^1(\R^3)}=\norm{w(s_1)}_{ H^1(\R^3)} \loe E_0$. Using \eqref{esti:spacetime-bound} again,
\EQ{
	\normb{\wt w ^{(s_1)}}_{X(\R)} \loe M_0.
}
Then, we can apply Lemma \ref{lem:long-time} on $I_2$ after translation in $t$ from the starting point $s_1$. Therefore, we obtain the existence of $w\in C([s_1,s_2]; H^1(\R^d))$, and
\EQ{
	\sup_{t\in I_2} \norm{w(t)}_{ H^1(\R^3)} \loe E_0\text{, and }\norm{w}_{X(I_2)}\loe M_0.
}

Inductively, for all $l=1,2,...,L$, we can obtain that $w\in C\brkb{I_l; H^1(\R^3)}$, and also
\EQ{
	\norm{w}_{X(I_l)}\loe M_0.
}
Therefore, we have $w\in C\brkb{[0,\I); H^1(\R^3)}$, and
\EQ{
\norm{w}_{X([0,\I))}\loe LM_0=C(A,M_0,\eta_2)=C(A).
}

\textbf{Proof of scattering:} Next, we prove the scattering statement. We only consider the $t\ra+\I$ case, and it suffices to prove that
\EQn{\label{eq:scattering-space-time}
\normb{\jb{\nabla} \int_0^\I e^{-is\De}(|u|^4u)\dx}_{L_x^2}\loe C(A).
}
In fact, since the global well-posedness already holds, we do not care the explicit expression of $A$. Now, all the space-time norms are taken over $[0,+\I)\times \R^3$. From previous argument,  
\EQ{
\norm{w}_{X([0,+\I))}\loe C(A).
}
Recall also that 
\EQ{
\norm{v}_{Y(\R)}\loe A.
}
Now, we can prove \eqref{eq:scattering-space-time} using the argument in Lemma \ref{lem:local},
\EQ{
\text{L.H.S. of }\eqref{eq:scattering-space-time} \lsm & \normb{ \int_0^\I e^{-is\De}( |u|^4u)\ds}_{L_x^2} + \normb{ \int_0^\I e^{-is\De}(\nabla wu^4)\ds}_{L_x^2} \\
& + \normb{ \int_0^\I e^{-is\De}(\nabla v u^4)\ds}_{L_x^2}\\
\lsm & \norm{|u|^4u}_{L_t^1 L_x^2} + \norm{\nabla wu^4}_{L_t^1 L_x^2} + \norm{\nabla vu^4}_{L_t^1 L_x^2}\\
\lsm & \norm{\jb{\nabla}w}_{L_t^2 L_x^6}\norm{u}_{L_t^8 L_x^{12}}^4 + \norm{\jb{\nabla}v}_{L_t^2 L_x^\I}\norm{u}_{L_{t,x}^8}^4\\
\loe & C(A).
}
This finishes the proof of scattering statement.

\section{Global well-posedness and scattering in 4D case}\label{sec:gwp-4d}

\vskip .5cm

Now, we give the proof of Theorem \ref{thm:global} in 4D case. The argument is parallel to the 3D case, so we only give a sketch of the proof and highlight the different part.
\subsection{Reduction to the deterministic problem}\label{sec:reduction-global-4d}
Suppose that $u=v+w$ with $u_0=v_0+w_0$, $v=e^{it\De}v_0$, and $w$ satisfying
\EQn{
	\label{eq:nls-w-4d}
	\left\{ \aligned
	&i\pd_t w + \De w = |u|^2 u, \\
	& w(0,x) = w_0(x).
	\endaligned
	\right.
}
Recall that
\EQ{
\norm{v}_{Y(I)}:=&\normb{\jb{\nabla}^{s+a-}v}_{L_t^2 L_x^\I(I\times \R^4)} + \norm{v}_{L_t^4 L_x^8(I\times \R^4)} + \norm{v}_{L_t^6 L_x^3(I\times \R^4)} \\
& + \normb{\jb{\nabla}^{-\frac14}v}_{L_{t,x}^4(I\times \R^4)},
}
and
\EQ{
\norm{v}_{Z(I)}:=  \norm{v}_{L_t^\I H_x^s(\R\times\R^4)} 
+ \normb{\jb{\nabla}^{s+2 a-}v}_{L_t^\I L_x^{\I}(\R\times\R^4)}.
}
We define the energy as
\EQn{\label{defn:energy-w-4d}
	E(t):= \frac12\int_{\R^4}|\nabla w(t,x)|^2 \dx + \frac{1}{4}\int_{\R^4}|u(t,x)|^{4}\dx,
}
and the mass as
\EQn{\label{defn:mass-w-4d}
	M(t):=\int_{\R^4}|w(t,x)|^2\dx.
}
\begin{prop}\label{prop:global-derterministic-4d}
	Let $a\in\N$, $a> 10$, $\frac12-\frac12a<s<0$, and $A>0$. Then, there exists $N_0=N_0(A)\gg1$ such that the following properties hold.  Let $u_0\in L_x^2(\R^4)$, $v_0$ satisfy that $\supp\wh v_0\subset \fbrk{\xi\in\R^4:|\xi|\goe \frac12 N_0}$,  and $w_0=u_0-v_0$. Moreover, let $v=e^{it\Delta}v_0$ and $w=u-v$. Suppose that  $v\in Y\cap Z(\R)$, $w_0\in H^1(\R^4)$ such that
	\EQ{
		\norm{u_0}_{L_x^2}+\norm{v}_{Y\cap Z(\R)}\loe A\text{, and }E(w_0) \loe A N_0^{2}.
	}
	Then, there exists a solution $u$ of \eqref{eq:nls-3D} on $\R$ with $w\in C(\R;H_x^{1}(\R^4))$. Furthermore, there exists $u_{\pm}\in H_x^1(\R^4)$ such that
	\EQ{
		\lim_{t\ra\pm\I}\norm{u(t)-v(t)-e^{it\De}u_{\pm}}_{H_x^1(\R^4)}=0.
	}
\end{prop}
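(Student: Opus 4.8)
The plan is to run the same four-step scheme as in the 3D argument of Sections~\ref{sec:local}--\ref{sec:global-scattering}, now for the cubic nonlinearity $|u|^2u$. First, a local well-posedness statement for \eqref{eq:nls-w-4d} in a ball of $C([0,T];H_x^1(\R^4))\cap X(I)$ (with $X$ the 4D analogue of the space used in Lemma~\ref{lem:local}), obtained by a contraction argument from the Strichartz estimate and H\"older's inequality, exactly as in Lemma~\ref{lem:local}. Second, a modified interaction Morawetz estimate controlling $\normb{\jb{\nabla}^{-1/4}w}_{L_{t,x}^4(I\times\R^4)}$ by $L_t^\infty$-based Sobolev norms of $w$ together with remainder terms involving $v$. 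Third, an almost-conservation law yielding, via a bootstrap on $[0,T]$, the bounds $\sup_t E(t)\lsm AN_0^{2}$ and the companion control of the low-order norm of $w$. Fourth, short-time and long-time perturbation lemmas (analogues of Lemmas~\ref{lem:short-time}--\ref{lem:long-time}) together with the 4D analogue of Lemma~\ref{lem:energy-critical-classical}, namely global well-posedness and scattering with an a priori space-time bound for the defocusing energy-critical NLS on $\R^4$ (Ryckman--Visan \cite{RV07AJM}). Granting these, Proposition~\ref{prop:global-derterministic-4d} is assembled exactly as in Section~\ref{sec:global-scattering}: bootstrap the energy to get $E_0=C(A,N_0)$, feed it into the classical theory to obtain a uniform bound $M_0=C(E_0)$ for the unperturbed flow, split $[0,\infty)$ into finitely many intervals on which $\norm{v}_{Y}\le\eta_2(M_0)$, iterate the long-time perturbation lemma, and read off scattering from the finiteness of $\norm{w}_{X(\R)}$; as in 3D the order of quantifiers matters, since the number of perturbation intervals depends on the bootstrapped energy bound.

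For the Morawetz step the remainder is $e:=|u|^2u-|w|^2w=O\brk{|v|\,(|w|^2+|v|^2)}$, and after discarding the nonnegative ``angular'' contributions and the good term $\gsm\normb{\jb{\nabla}^{-1/4}w}_{L_{t,x}^4}^4$, the dangerous remainder integrals contain $\re(e\,\nabla\bar w)$, whose worst piece is of the type $\int v\,w^2\,\nabla\bar w$. Since only $\norm{\nabla w}_{L_x^2}$ is available, this cannot be estimated directly, so one transfers the derivative from $w$ onto the smooth high-frequency factor $v$ via the nonlinear structure identity --- the $d=4$ counterpart of \eqref{Mor-Mainterm}, which follows from $\re(|u|^2u\,\nabla\bar u)=\tfrac14\nabla|u|^4$ --- reducing the bad piece to a term with $\tfrac1{|x-y|}\bigl(|u|^4-|w|^4\bigr)$ (handled by Hardy's inequality, Lemma~\ref{lem:hardy}) and a term of the form $\int|u|^2u\,\nabla\bar v$ (handled via $\norm{\nabla v}_{L_t^2L_x^\infty}$, which is finite because $s+a>1$ and $\wh v$ is supported in $\{|\xi|\gsm N_0\}$). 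For the energy step, the analogue of Lemma~\ref{lem:conservation-law} gives $\tfrac{\mathrm{d}}{\mathrm{d}t}E(t)=-\re i\int_{\R^4}|u|^2u\,\De\bar v\dx$, which is bounded schematically by
\EQ{
\norm{\De v}_{L_t^2L_x^\infty}\,\normb{\jb{\nabla}^{-1/4}w}_{L_{t,x}^4}^2\,\normb{\jb{\nabla}^{1/2}w}_{L_t^\infty L_x^2}+(\text{terms of higher order in }v),
}
using the fractional Leibniz/Gagliardo--Nirenberg bound (Lemma~\ref{lem:GN-2}) and the fact that $\tfrac{6-d}{d-2}=1$ when $d=4$, so that $\normb{|\nabla|^{1/2}\bigl(w^{(6-d)/(d-2)}\bigr)}_{L_t^\infty L_x^2}\lsm\normb{|\nabla|^{1/2}w}_{L_t^\infty L_x^2}$, after which $\jb{\nabla}^{1/2}w$ is interpolated between $L_x^2$ and $\dot H_x^1$. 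The decisive gain is that the frequency support of $v$ and the $Y$- and $Z$-norm bounds of Corollary~\ref{cor:Y-norm-Z-norm-4d} give $\norm{\De v}_{L_t^2L_x^\infty}\lsm N_0^{2-a-s+}$, a large negative power of $N_0$ since $a>10$ and $s>\tfrac12-\tfrac12a$; this absorbs the positive powers of $N_0$ carried by the other factors and lets the bootstrap for $E(t)$ close once $N_0=N_0(A)$ is chosen large.

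The main obstacle is making the energy-increment estimate actually close. Because the nonlinearity is cubic, fractional derivatives ($|\nabla|^{1/2}$ and $|\nabla|^{-1/4}$) appear throughout, and the Morawetz bound for $\normb{\jb{\nabla}^{-1/4}w}_{L_{t,x}^4}$ already consumes the very energy (and low-order) bound that one is trying to prove; hence the bootstrap must be run simultaneously for $E(t)$ and $\normb{\jb{\nabla}^{-1/4}w}_{L_{t,x}^4}$, in the spirit of \eqref{eq:bound-w-hypothesis}--\eqref{eq:bound-w-ltx4}, with every power of $N_0$ tracked so that the net exponent in each increment term is strictly negative --- this is precisely the point where the lower bound on $a$ built into Definition~\ref{defn:randomization} is used. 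A secondary technical point is the perturbation theory: the 4D short-time lemma must be run with the pointwise bounds $|F(g+v,\wt w)|\lsm(|g|+|v|)(|g|^2+|v|^2+|\wt w|^2)$ and their gradient analogues, and then iterated over the $\norm{v}_{Y}$-small intervals; this is routine but delicate bookkeeping, and since the statement of Proposition~\ref{prop:global-derterministic-4d} carries no separate mass hypothesis, the role played by the 3D mass bootstrap is taken here by the bootstrapped low-order ($L_x^2$ or $\dot H_x^{1/2}$) control of $w$ coming from the Morawetz step.
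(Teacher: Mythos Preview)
Your outline is correct and follows the paper's approach step for step: local theory (Lemma~\ref{lem:local-4d}), modified interaction Morawetz with the derivative-transfer identity (Lemma~\ref{lem:inter-mora-4d}), almost conservation via bootstrap (Proposition~\ref{prop:energy-bound-4d}), perturbation lemmas (Lemmas~\ref{lem:short-time-4d}--\ref{lem:long-time-4d}), and assembly via Ryckman--Visan (Lemma~\ref{lem:energy-critical-classical-4d}).

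One point needs correction. Your final remark that ``the role played by the 3D mass bootstrap is taken here by the bootstrapped low-order control of $w$ coming from the Morawetz step'' has the dependency backwards. The paper \emph{does} run a separate mass bootstrap in 4D: Proposition~\ref{prop:energy-bound-4d} simultaneously bootstraps $M(t)\loe 2AN_0^{-2s}$ and $E(t)\loe 2AN_0^{2(1-s)}$, and the mass part is not optional. The right-hand side of Lemma~\ref{lem:inter-mora-4d} contains $\norm{w}_{L_t^\I L_x^2}^2$ explicitly, and every occurrence of $\norm{w}_{L_t^\I \dot H_x^{1/2}}$ is controlled by interpolation between $\norm{w}_{L_t^\I L_x^2}$ and $\norm{w}_{L_t^\I \dot H_x^1}$ (see \eqref{eq:bound-w-ltinfty-4d}); so the Morawetz bound cannot supply the $L_x^2$ control, it consumes it. The hypotheses of Proposition~\ref{prop:global-derterministic-4d} as stated are indeed lighter than those of Proposition~\ref{prop:energy-bound-4d} (this looks like a minor inconsistency in the paper; compare with the 3D Proposition~\ref{prop:global-derterministic}), but the proof that is actually carried out uses the full mass-plus-energy bootstrap exactly as in 3D.
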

We will give the proof of Proposition \ref{prop:global-derterministic-4d} in Sections \ref{sec:local-4d}-\ref{sec:global-scattering-4d}. Now, we can prove Theorem \ref{thm:global} in 4D case assuming that Proposition \ref{prop:global-derterministic-4d} holds, using the same argument in 3D case. 
\subsection{Local theory}\label{sec:local-4d}
We define the space $X(I)$ as
\EQ{
	\norm{w}_{X(I)}:=\norm{\jb{\nabla}w}_{L_t^2 L_x^{4}(I\times \R^4)} + \norm{w}_{L_t^4 L_x^8(I\times \R^4)} + \norm{w}_{L_{t,x}^4(I\times \R^4)}.
}
\begin{lem}[Local well-posedness]\label{lem:local-4d}
	Let $a\in\N$, $a> 10$, $\frac12-\frac12a<s<1$, $v\in Y\cap Z(\R)$, and $w_0\in H_x^1$. Then, there exists some $T>0$ depending on $a$, $w_0$, and $v$ such that there exists a unique solution $w$ of \eqref{eq:nls-w} in some $0$-neighbourhood of 
	\EQ{
		C([0,T];H_x^1(\R^4)) \cap X([0,T]).
	}
\end{lem}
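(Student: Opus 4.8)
The plan is to run a standard contraction-mapping argument for the Duhamel operator associated to \eqref{eq:nls-w-4d}, closely mirroring the 3D proof in Lemma \ref{lem:local}. First I would fix the parameters: let $C_0$ be the constant for which $\norm{e^{it\De}w_0}_{L_t^\I H_x^1(\R)\cap X(\R)}\loe C_0\norm{w_0}_{H_x^1}$ (available from Lemma \ref{lem:strichartz} since, in $d=4$, the pairs $(2,4)$, $(4,8)$ and $(4,4)$ appearing in the $X$-norm are all $L_x^2$- or $\dot H^{1}$-admissible after placing one derivative), set $R:=\max\fbrk{C_0\norm{w_0}_{H_x^1},1}$, choose $\de>0$ small with $C\de^2R^2\loe\frac12$, and then pick $T>0$ so that $\norm{e^{it\De}w_0}_{X([0,T])}+\norm{v}_{Y([0,T])}\loe \de R$; this last smallness is achievable by dominated convergence since $e^{it\De}w_0\in X(\R)$ and $v\in Y(\R)$. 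Define the ball
\EQ{
B_{R,\de,T}:=\fbrk{w\in C([0,T];H_x^1):\norm{w}_{L_t^\I H_x^1([0,T]\times\R^4)}+\de^{-1}\norm{w}_{X([0,T])}\loe 4R},
}
and the solution map $\Phi_{w_0,v}(w):=e^{it\De}w_0-i\int_0^t e^{i(t-s)\De}(|u|^2u)\ds$ with $u=w+v$.

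The core estimates are nonlinear bounds on $|u|^2u$ and $\jb{\nabla}(|u|^2u)$ in $L_t^1L_x^2$. The cubic nonlinearity in $d=4$ is handled by Hölder: the term with all three factors of $w$ (or $w$-derivative) is controlled by $\norm{\jb{\nabla}w}_{L_t^2L_x^4}\norm{w}_{L_t^4L_x^8}^2\lsm\de^3R^3$, and the terms involving $v$ use that $v$ is spectrally supported in $\fbrk{|\xi|\goe\frac12 N_0}$ together with $s+a>1$ (from $a>10$, $s>\frac12-\frac12a$), so that $\norm{\jb{\nabla}v}_{L_t^2L_x^\I}\lsm\norm{v}_{Y}$ and $\norm{v}_{L_t^2L_x^\I}\lsm\norm{v}_{Y}$; pairing these with $\norm{u}_{L_t^4L_x^8}^2$ gives contributions $\lsm\de^3R^3$ as well. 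Here one must be slightly careful distributing the derivative in $\jb{\nabla}(|u|^2u)$: when it lands on a $w$-factor use $\norm{\jb{\nabla}w}_{L_t^2L_x^4}$ with the other two factors in $L_t^4L_x^8$; when it lands on a $v$-factor use $\norm{\jb{\nabla}v}_{L_t^2L_x^\I}$ with the other two in $L_{t,x}^4$ or $L_t^4L_x^8$. Collecting everything yields $\norm{\Phi_{w_0,v}(w)}_{L_t^\I H_x^1}\loe R+C\de^3R^3\loe 2R$ and $\norm{\Phi_{w_0,v}(w)}_{X}\loe \de R+C\de^3R^3\loe 2\de R$, so $\Phi_{w_0,v}$ maps $B_{R,\de,T}$ into itself.

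For the contraction, take $w_1,w_2\in B_{R,\de,T}$ and use the pointwise bounds $\absb{|u_1|^2u_1-|u_2|^2u_2}\lsm|w_1-w_2|(|w_1|^2+|w_2|^2+|v|^2)$ and the corresponding inequality for the gradient (with an extra term in which the derivative falls on $w_2$ or $v$, of the form $|w_1-w_2|(|\nabla w_2|+|\nabla v|)(|w_1|^2+|w_2|^2+|v|^2)$). The same Hölder scheme as above, with the $w$-factors measured in $L_t^4L_x^8$ and $\jb{\nabla}w$ in $L_t^2L_x^4$, gives
\EQ{
\norm{\Phi_{w_0,v}(w_1)-\Phi_{w_0,v}(w_2)}_{L_t^\I H_x^1\cap X}\loe C\de^2R^2\norm{w_1-w_2}_{B_{R,\de,T}}\loe\tfrac12\norm{w_1-w_2}_{B_{R,\de,T}},
}
by the choice of $\de$. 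Thus $\Phi_{w_0,v}$ is a contraction on the complete metric space $B_{R,\de,T}$, and the unique fixed point is the desired solution $w\in C([0,T];H_x^1(\R^4))\cap X([0,T])$. The only mildly delicate point, and the one I would treat with care, is the bookkeeping of derivative placement in the cubic term so as to always land the single available $L_t^2$-derivative norm on one factor while keeping the remaining two factors in dual-admissible norms; everything else is routine once the $Y$- and $X$-norms are matched to $L_x^2$-admissible pairs in dimension four.
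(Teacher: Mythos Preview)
Your proposal is correct and follows essentially the same approach as the paper's own proof: the same contraction-mapping setup with the identical ball $B_{R,\de,T}$, the same parameter choices ($R$, $\de$ with $C\de^2R^2\loe\tfrac12$, and $T$ from smallness of $e^{it\De}w_0$ in $X$ and $v$ in $Y$), and the same H\"older splitting of $\jb{\nabla}(|u|^2u)$ in $L_t^1L_x^2$ using $\norm{\jb{\nabla}w}_{L_t^2L_x^4}\norm{u}_{L_t^4L_x^8}^2$ and $\norm{\jb{\nabla}v}_{L_t^2L_x^\I}\norm{u}_{L_{t,x}^4}^2$. The paper in fact omits the contraction step (deferring to the 3D argument), so your write-up is slightly more complete; your invocation of the spectral support of $v$ is not actually part of the lemma's hypotheses, but the needed bound $\norm{\jb{\nabla}v}_{L_t^2L_x^\I}\lsm\norm{v}_Y$ holds anyway since $s+a->1$ makes $\jb{\xi}^{1-(s+a-)}$ an $L^1$ convolution kernel.
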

\begin{proof}
First, we make the choices of some parameters:
\enu{
	\item Let $C_0$ be the constant such that
	\EQ{
		\norm{e^{it\De}w_0}_{L_t^\I(\R; H_x^1)\cap X(\R)}\loe C_0\norm{w_0}_{H_x^1}.
	}
	\item 
	Define
	\EQ{
		R:=\max\fbrk{C_0\norm{w_0}_{H_x^1},1}.
	}
	\item 
	Let $\de>0$ be some small constant such that $C\de^2 R^2\loe \frac12$.
	\item 
	Let $T>0$ satisfy the smallness condition
	\EQ{
		\norm{e^{it\De}w_0}_{X([0,T])} + \norm{v}_{Y([0,T])}\loe\de R.
	}
}
Now, we define the working space as
\EQ{
	B_{R,\de,T}:=\fbrk{w\in C([0,T];H_x^1):\norm{w}_{L_t^\I H_x^1([0,T]\times\R^4)} + \de^{-1}\norm{w}_{X([0,T])}\loe 4 R},
}
equipped with the norm
\EQ{
	\norm{w}_{B_{R,\de,T}}:= \norm{w}_{L_t^\I H_x^1([0,T]\times\R^4)} + \de^{-1}\norm{w}_{X([0,T])}.
}
Take the solution map as
\EQ{
	\Phi_{w_0,v}(w):=e^{it\De}w_0-i\int_0^t e^{i(t-s)\De}(|u|^2u)\ds.
}
Then, it suffices to prove that $\Phi_{w_0,v}$ is a contraction mapping on $B_{R,\de,T}$.

Now, we only prove that for any $w\in B_{R,\de,T}$, $\Phi_{w_0,v}(w)\in B_{R,\de,T}$. By Lemma \ref{lem:strichartz} and H\"older's inequality,
\EQ{
	\norm{\Phi_{w_0,v}(w)}_{L_t^\I H_x^1} \loe & C_0\norm{w_0}_{H_x^1} + C\norm{\nabla (|u|^2u)}_{L_t^1 L_x^2} + C\norm{|u|^2u}_{L_t^1 L_x^2} \\
	\loe & R + C\norm{\nabla w}_{L_t^2 L_x^4}\norm{u}_{L_t^4 L_x^{8}}^2 + C\norm{\nabla v}_{L_t^2 L_x^\I} \norm{u}_{L_{t,x}^4}^2 \\
	&+ C\norm{u}_{L_t^2 L_x^4}\norm{u}_{L_t^4 L_x^{8}}^2\\
	\loe & R + C\de^3 R^3 \loe 2R.
}

Similar as above, we have
\EQ{
	\norm{\Phi_{w_0,v}(w)}_{X} \loe & \norm{e^{it\De}w_0}_{H_x^1} + C\norm{\nabla (|u|^2u)}_{L_t^1 L_x^2} + C\norm{|u|^2u}_{L_t^1 L_x^2} \\
	\loe & \de R + C\de^3 R^3 \loe2\de R.
}
Therefore, we have
\EQ{
	\norm{\Phi_{w_0,v}(w)}_{B_{R,\de,T}} \loe \norm{\Phi_{w_0,v}(w)}_{L_t^\I H_x^1} + \de^{-1}\norm{\Phi_{w_0,v}(w)}_{X}\loe 4R.
}
This shows that $\Phi_{w_0,v}$ maps $B_{R,\de,T}$ into itself. Since we already establish the non-linear estimates, the contraction mapping statement follows by similar argument in 3D case. 
\end{proof}

\subsection{Modified Interaction Morawetz}\label{sec:space-time-4d}

\begin{lem}[Modified Interaction Morawetz]\label{lem:inter-mora-4d}
	Given $T>0$. Let $w\in C([0,T];H_x^1)$ be the solution of perturbation equation \eqref{eq:nls-w}. Then, we have
	\EQn{\label{eq:inter-mora-4d}
		\normb{|\nabla|^{-\frac14}w}_{L_{t,x}^4}^4\lsm & \norm{w}_{L_t^\I L_x^2}^2 \norm{w}_{L_t^\I \dot H_x^{\frac12}}^2 \\
		&+ \norm{w}_{L_t^\I H_x^{\frac12}} \normb{|\nabla|^{-\frac14}w}_{L_{t,x}^4}^2\brkb{\norm{v}_{L_t^2 L_x^\I}\norm{w}_{L_t^\I \dot H_x^{\frac12}}^2 + \norm{\nabla v}_{L_t^2 L_x^\I}\norm{w}_{L_t^\I L_x^2}^2} \\
		& + \norm{v}_{L_t^2 L_x^\I}\norm{w}_{L_t^\I \dot H_x^{\frac12}}^2\brkb{\norm{w}_{L_t^\I L_x^2} \norm{v}_{L_{t,x}^4}^2 + \norm{v}_{L_t^6 L_x^3}^3},
	}
	where all the space-time norms are taken over $[0,T]\times \R^4$.
\end{lem}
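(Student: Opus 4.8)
The plan is to follow the proof of Lemma~\ref{lem:inter-mora} (the 3D case), replacing the Morawetz weight computations in $\R^3$ by those in $\R^4$, where the interaction Morawetz functional produces the negative-order norm $\normb{|\nabla|^{-\frac14}w}_{L_{t,x}^4}$ in place of $\norm{w}_{L_{t,x}^4}$.

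First I would write the equation for $w$ as $i\pd_t w + \De w = |w|^2w + e$ with $e := |u|^2u - |w|^2w$, $u = v+w$, and introduce $m(t,x) := \frac12|w(t,x)|^2$, $p(t,x) := \frac12\im(\wb w\nabla w)(t,x)$, which satisfy $\pd_t m = -2\nabla\cdot p + \im(e\wb w)$ and $\pd_t p = -\re\nabla\cdot(\nabla\wb w\nabla w) - \frac14\nabla(|w|^4) + \frac12\nabla\De m + \re(\wb e\nabla w) - \frac12\re\nabla(\wb w e)$, the coefficient $\frac14$ coming from $\re(|w|^2w\nabla\wb w) = \frac14\nabla(|w|^4)$. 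With $\M(t) := \iint_{\R^4\times\R^4}\frac{x-y}{|x-y|}\cdot p(t,x)\,m(t,y)\dx\dy$, differentiating in $t$ and substituting the two conservation laws produces the interaction Morawetz identity as a sum of ``free'' terms (not containing $e$) and ``error'' terms (linear in $e$), just as in the 3D proof. For the free part, the classical computation of \cite{CKSTT04CPAM} carried out in $\R^4$ shows that the tensorial term plus the $\iint\frac{x-y}{|x-y|}\cdot p(x)\,\nabla\cdot p(y)$ term is $\goe 0$, while the $\frac12\nabla\De m$ term contributes $\gsm\iint|x-y|^{-3}|w(x)|^2|w(y)|^2\dx\dy\gsm\normb{|\nabla|^{-\frac14}w}_{L_x^4}^4$ (using $\nabla\cdot\frac{x}{|x|} = \frac3{|x|}$, $\De\frac1{|x|}\sim -|x|^{-3}$ in $\R^4$, and the bilinear bound $\iint|x-y|^{-3}|w(x)|^2|w(y)|^2\dx\dy\sim\normb{|\nabla|^{-\frac12}(|w|^2)}_{L_x^2}^2\gsm\normb{|\nabla|^{-\frac14}w}_{L_x^4}^4$), and $\sup_t|\M(t)|\lsm\norm{w}_{L_t^\I L_x^2}^2\norm{w}_{L_t^\I\dot H_x^{\frac12}}^2$. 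Integrating over $[0,T]$ then gives $\normb{|\nabla|^{-\frac14}w}_{L_{t,x}^4}^4\lsm\norm{w}_{L_t^\I L_x^2}^2\norm{w}_{L_t^\I\dot H_x^{\frac12}}^2$ plus the time integral of the error terms.

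The error terms are, schematically, $\iint\frac{x-y}{|x-y|}\cdot p(x)\,\im(e\wb w)(y)$, $\iint\frac{x-y}{|x-y|}\cdot\re(\wb e\nabla w)(x)\,m(y)$, and $\frac32\iint\frac1{|x-y|}\re(\wb e w)(x)\,m(y)$. The first and third I would bound directly: estimate $\sup_y\absb{\int\frac{x-y}{|x-y|}\cdot p(x)\dx}$ and $\sup_x\normb{|x-\cdot|^{-\frac12}w}_{L_y^2}^2$ by $\norm{w}_{L_t^\I\dot H_x^{\frac12}}^2$ via Hardy's inequality (Lemma~\ref{lem:hardy}), and control the leftover $\norm{e\wb w}_{L_t^1 L_x^1}$, resp.\ $\norm{e}_{L_t^1 L_x^{4/3}}\norm{w}_{L_t^\I L_x^4}$, using the pointwise bound $|e|\lsm|v|(|w|^2 + |v|^2)$ and pushing the $L_t^2 L_x^\I$ norm onto the $v$ factors, the remaining factors being placed (after Sobolev/interpolation) in $\normb{|\nabla|^{-\frac14}w}_{L_{t,x}^4}$, $\norm{w}_{L_t^\I H_x^{1/2}}$, $\norm{v}_{L_{t,x}^4}$ and $\norm{v}_{L_t^6 L_x^3}$. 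The genuinely delicate term is $\iint\frac{x-y}{|x-y|}\cdot\re(\wb e\nabla w)(x)\,m(y)$: expanding $\wb e\nabla w$ produces pieces of the form $\wb v\,w^2\nabla\wb w$, for which only the $L_x^2$ bound on $\nabla w$ is available and the remaining $w^2$ cannot be absorbed. As in the 3D proof, the remedy is the algebraic identity
\EQ{
& \int_{\R^4}\frac{x-y}{|x-y|}\cdot\re\big((|u|^2u-|w|^2w)\nabla\wb w\big)(x)\dx \\
= & -\frac34\int_{\R^4}\frac1{|x-y|}\big(|u(x)|^4-|w(x)|^4\big)\dx - \int_{\R^4}\frac{x-y}{|x-y|}\cdot\re\big(|u|^2u\nabla\wb v\big)(x)\dx,
}
obtained by writing $\re(|u|^2u\nabla\wb w) = \re(|u|^2u\nabla(\wb u-\wb v))$, using $\re(|u|^2u\nabla\wb u) = \frac14\nabla(|u|^4)$ together with $\re(|w|^2w\nabla\wb w) = \frac14\nabla(|w|^4)$, and integrating by parts (the constant $\frac34 = \frac14\cdot 3$ coming from $\nabla\cdot\frac{x}{|x|} = \frac3{|x|}$). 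This transfers the derivative from $w$ onto $v$; then the first term on the right is handled by $\sup_x\normb{|x-\cdot|^{-\frac12}w}_{L_y^2}^2\lsm\norm{w}_{L_t^\I\dot H_x^{\frac12}}^2$ and $||u|^4-|w|^4|\lsm|v|(|w|^3 + |v|^3)$, and the second term by putting $\nabla\wb v$ into $\norm{\nabla v}_{L_t^2 L_x^\I}$ and $|u|^2u\lsm|w|^3 + |v|^3$ against $\norm{w}_{L_t^\I L_x^2}^2$ and the $v$-norms above.

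Finally I would collect all contributions and apply H\"older in $t$ and $x$, distributing the $L_t^2 L_x^\I$ norms of $v$ and $\nabla v$ (both controlled by $\norm{v}_{Y}$ since $v$ is high frequency), the $\normb{|\nabla|^{-\frac14}w}_{L_{t,x}^4}$, $L_t^\I H_x^{1/2}$ and $L_t^\I L_x^2$ norms of $w$, and the $L_t^\I L_x^2$, $L_t^6 L_x^3$ and $\normb{|\nabla|^{-\frac14}\cdot}_{L_{t,x}^4}$ norms of $v$; this yields the claimed bound \eqref{eq:inter-mora-4d}. The main obstacle is, exactly as in 3D, the term $\iint\frac{x-y}{|x-y|}\cdot\re(\wb e\nabla w)(x)\,m(y)$: one cannot close it by a direct H\"older estimate but must exploit the algebraic cancellation above, and then carefully track the fractional Morawetz norm $\normb{|\nabla|^{-\frac14}w}_{L_{t,x}^4}$ through the subsequent Hardy and H\"older steps so that every factor of $w$ lands in a norm that the energy/mass bootstrap can afford; the coefficient bookkeeping specific to $d=4$ (the $\frac14\nabla|w|^4$, the $\frac3{|x|}$, the $|x|^{-3}$) is routine.
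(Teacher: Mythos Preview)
Your proposal is correct and follows the paper's approach exactly: same Morawetz action, same local conservation laws (with the $\frac14\nabla(|w|^4)$ coefficient), same algebraic identity transferring $\nabla w$ onto $\nabla v$ in the dangerous error term, and the same free-part analysis producing $\normb{|\nabla|^{-1/4}w}_{L^4_x}^4$ in four dimensions.

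One step is imprecise enough to fail as written. Your H\"older splitting $\norm{e}_{L_t^1 L_x^{4/3}}\norm{w}_{L_t^\I L_x^4}$ for the third error term (and, implicitly, the analogous treatment of the $|u|^3$ and $|u|^4-|w|^4$ pieces after the identity) cannot close: in $\R^4$ the norm $\norm{w}_{L_x^4}$ sits at the $\dot H_x^1$ level and does not appear anywhere on the right of \eqref{eq:inter-mora-4d}. The paper instead keeps $\int_{\R^4}|e\wb w|\dx$ intact, expands $|e\wb w|\lsm |v||w|^3 + |v|^3|w|$, and for the cubic-in-$w$ part invokes the Gagliardo--Nirenberg inequality
\[
\norm{w}_{L_x^3(\R^4)}^3 \lsm \norm{w}_{H_x^{1/2}}\,\normb{|\nabla|^{-1/4}w}_{L_x^4}^2,
\]
which is precisely the ``Sobolev/interpolation'' you gesture at and is the key $4$D-specific device making the Morawetz norm $\normb{|\nabla|^{-1/4}w}_{L_{t,x}^4}$ reappear on the right-hand side in the form the bootstrap needs. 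The same inequality handles the $\norm{w}_{L_t^6 L_x^3}^3$ factors arising from the second error term after the algebraic identity. Make this step explicit and your argument matches the paper's line for line.
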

\begin{proof}
	Recall that $w$ satisfies
	\EQ{
		i\pd_t w + \De w=|w|^2w+e,
	}
	where we denote $e:=|u|^2u-|w|^2w$.
	Denote that 
	$$
	m(t,x)=\frac12|w(t,x)|^2;\quad 
	p(t,x)=\frac12\im \brko{\wb w(t,x)\nabla w(t,x)}.
	$$
	Then, we have
	\EQn{\label{eq:local-mass-flow-4d}
		\pd_tm=-2\nabla\cdot p+ \im\brk{e\bar w},
	}
	and
	\EQn{\label{eq:local-momentum-flow-4d}
		\pd_tp = & -\re \nabla\cdot \brko{\nabla \wb w \nabla w}  -\frac14 \nabla \brkb{|w|^4} + \half 1 \nabla \De m
		+ \re \brk{\wb e\nabla w}-\frac12\re\nabla\brk{\wb w e}.
	}
	Let 
	\EQ{
		M(t):= \int\!\!\int_{\R^{4+4}} \frac{x-y}{|x-y|}\cdot p(t,x)\> m(t,y)\dx\dy,
	}
	then by \eqref{eq:local-mass-flow-4d} and \eqref{eq:local-momentum-flow-4d}, we have the interaction Morawetz identity
	\begin{subequations}\label{5.7-a-f}
		\EQnn{
			\pd_t M(t)
			= &\iint_{\R^{4+4}} \frac{x-y}{|x-y|}\cdot \partial_t p(t,x)\> m(t,y)\dx\dy\nonumber\\
			& \quad + \iint_{\R^{4+4}} \frac{x-y}{|x-y|}\cdot p(t,x)\> \partial_t m(t,y)\dx\dy\nonumber\\
			= &\iint_{\R^{4+4}} \frac{x-y}{|x-y|}\cdot \Big( -\re \nabla\cdot \brko{\nabla \wb w \nabla w}  -\frac14 \nabla \brkb{|w|^4}\Big)(t,x)\> m(t,y)\dx\dy\nonumber\\
			& \quad -2\iint_{\R^{4+4}} \frac{x-y}{|x-y|}\cdot p(t,x)\> \nabla\cdot p(t,y)\dx\dy\nonumber\\
			&\quad +\frac12\iint_{\R^{4+4}} \frac{x-y}{|x-y|}\cdot \nabla \De m(t,x)\> m(t,y)\dx\dy\label{esti:inter-mora-angular-3-4d}\\
			&\quad +\iint_{\R^{4+4}} \frac{x-y}{|x-y|}\cdot  p(t,x)\>  \im\brk{e\bar w}(t,y)\dx\dy\label{esti:inter-mora-remainder-1-4d}\\
			& \quad + \iint_{\R^{4+4}} \frac{x-y}{|x-y|}\cdot \re \brk{\wb e\nabla w}(t,x)\> m(t,y)\dx\dy\label{esti:inter-mora-remainder-2-4d}\\
			& \quad + \iint_{\R^{4+4}} \frac1{|x-y|}\cdot \re \brk{\wb e w}(t,x)\> m(t,y)\dx\dy.\label{esti:inter-mora-remainder-3-4d}
		}
	\end{subequations}
	Different from the 3D case, we have that 
	\EQ{
		\eqref{esti:inter-mora-angular-3-4d}\gsm \normb{|\nabla|^{-\frac14}w(t)}_{L_x^4}^4.
	}
	Then treating similar as in the proof of Lemma \ref{lem:inter-mora}, we obtain that 
	\EQ{
		\normb{|\nabla|^{-\frac14}w}_{L_{t,x}^4}^4\lsm \norm{w}_{L_t^\I L_x^2}^2 \norm{w}_{L_t^\I \dot H_x^{\frac12}}^2+ \int_0^T |\eqref{esti:inter-mora-remainder-1-4d}| + |\eqref{esti:inter-mora-remainder-2-4d}| +|\eqref{esti:inter-mora-remainder-3-4d}|\dt.
	}
	Next, we estimate the terms containing \eqref{esti:inter-mora-remainder-1-4d}, \eqref{esti:inter-mora-remainder-2-4d}, and \eqref{esti:inter-mora-remainder-3-4d}. We first consider \eqref{esti:inter-mora-remainder-1-4d}. By H\"older's inequality,
	\EQ{
		\int_0^T |\eqref{esti:inter-mora-remainder-1-4d}| \dt \lsm & \int_0^T \absb{\iint_{\R^{4+4}} \frac{x-y}{|x-y|}\cdot  p(t,x)\>  \im\brk{e\bar w}(t,y)\dx\dy} \dt \\
		\lsm & \int_0^T \absb{\int_{\R^{4}}   \im\brk{e\bar w}(t,y)\dy} \dt \sup_{y,t}\absb{\int_{\R^4}\frac{x-y}{|x-y|}\cdot  p(t,x)\dx} \\
		\lsm & \norm{v}_{L_t^2 L_x^\I}\norm{w}_{L_t^6 L_x^3}\brkb{\norm{w}_{L_t^6 L_x^3}^2+ \norm{v}_{L_t^6 L_x^3}^2} \norm{w}_{L_t^\I \dot H_x^{\frac12}}^2.
		}
Note that by Gagliardo-Nirenberg's inequality, we have that 	
\EQn{\label{GN-inequality}
\norm{w}_{L_x^3(\R^4)}^3\lsm \norm{w}_{H_x^{\frac12}(\R^4)}\cdot\normb{|\nabla|^{-\frac14}w}_{L_x^4(\R^4)}^2.
}
This gives  that 	
		
		\EQ{
		\int_0^T |\eqref{esti:inter-mora-remainder-1-4d}| \dt\lsm & \norm{v}_{L_t^2 L_x^\I}\norm{w}_{L_t^\I H_x^{\frac12}} \normb{|\nabla|^{-\frac14}w}_{L_{t,x}^4}^2\norm{w}_{L_t^\I \dot H_x^{\frac12}}^2 \\
		& + \norm{v}_{L_t^2 L_x^\I}\norm{v}_{L_t^6 L_x^3}^3\norm{w}_{L_t^\I \dot H_x^{\frac12}}^2.
	}
	
	We then consider \eqref{esti:inter-mora-remainder-2-4d}, where we need to modify the Morawetz estimate.
	\EQ{
		\int_0^T |\eqref{esti:inter-mora-remainder-2-4d}| \dt \lsm & \int_0^T \absb{\iint_{\R^{4+4}} \frac{x-y}{|x-y|}\cdot \re \brk{(|u|^2u-|w|^2w)\nabla \wb w}(t,x)\> m(t,y)\dx\dy} \dt.
	}
Similar as in the proof of \eqref{Mor-Mainterm}, we have that by integration-by-parts,
	\EQ{
		&\int_{\R^4} \frac{x-y}{|x-y|}\cdot \re \brk{(|u|^2u-|w|^2w)\nabla \wb w}(t,x) \dx \\
		= & -\frac34\int_{\R^4}  \frac{1}{|x-y|}\brk{|u(t,x)|^4-|w(t,x)|^4}\dx - \int_{\R^4}  \frac{x-y}{|x-y|}\cdot \re \brk{|u|^2u\nabla \wb v}(t,x)\dx.
	}
	Therefore, by Lemma \ref{lem:hardy} and \eqref{GN-inequality}, we have
	\EQ{
		\int_0^T |\eqref{esti:inter-mora-remainder-2-4d}| \dt \lsm & \int_0^T \absb{\iint_{\R^{4+4}} \frac{1}{|x-y|}\brk{|u(t,x)|^4-|w(t,x)|^4}\> m(t,y)\dx\dy} \dt \\
		& + \int_0^T \absb{\iint_{\R^{4+4}} \frac{x-y}{|x-y|}\cdot \re \brk{|u|^2u\nabla \wb v}(t,x)\> m(t,y)\dx\dy} \dt\\
		\lsm & \int_0^T \absb{\int_{\R^{4}} \brk{|u(t,x)|^4-|w(t,x)|^4}\dx} \sup_{x}\normb{\frac{1}{|x-\cdot|^{\frac12}}w(t,\cdot)}_{L_y^2}^2 \dt \\
		& + \int_0^T \int_{\R^{4}}  \absb{|u|^2u\nabla \wb v(t,x)}\dx\norm{w(t)}_{L_y^2}^2 \dt\\
		\lsm & \norm{v}_{L_t^2 L_x^\I}\big( \norm{w}_{L_t^6 L_x^3}^3+ \norm{v}_{L_t^6 L_x^3}^3\big)\norm{w}_{L_t^\I \dot H_x^{\frac12}}^2 \\
		& + \norm{\nabla v}_{L_t^2 L_x^\I} \big( \norm{w}_{L_t^6 L_x^3}^3+ \norm{v}_{L_t^6 L_x^3}^3\big) \norm{w}_{L_t^\I L_x^2}^2 \\
		\lsm & \norm{v}_{L_t^2 L_x^\I}\norm{w}_{L_t^\I H_x^{\frac12}} \normb{|\nabla|^{-\frac14}w}_{L_{t,x}^4}^2\norm{w}_{L_t^\I \dot H_x^{\frac12}}^2 \\
		& + \norm{v}_{L_t^2 L_x^\I} \norm{v}_{L_t^6 L_x^3}^3\norm{w}_{L_t^\I \dot H_x^{\frac12}}^2 \\
		& + \norm{\nabla v}_{L_t^2 L_x^\I}\norm{w}_{L_t^\I H_x^{\frac12}} \normb{|\nabla|^{-\frac14}w}_{L_{t,x}^4}^2 \norm{w}_{L_t^\I L_x^2}^2 \\
		& + \norm{\nabla v}_{L_t^2 L_x^\I} \norm{v}_{L_t^6 L_x^3}^3 \norm{w}_{L_t^\I L_x^2}^2.
	}
	
	Now, we consider the term  \eqref{esti:inter-mora-remainder-3-4d}. By H\"older's inequality and \eqref{GN-inequality},
	\EQ{
		\int_0^T |\eqref{esti:inter-mora-remainder-3-4d}| \dt \lsm & \int_0^T \absb{\iint_{\R^{4+4}} \frac{1}{|x-y|} e\wb w(t,x)\> m(t,y)\dx\dy} \dt \\
		\lsm & \int_0^T \int_{\R^{4}} \abs{e\wb w(t,x)}\dx \sup_{x}\normb{\frac{1}{|x-\cdot|^{\frac12}}w(t,\cdot)}_{L_y^2}^2 \dt \\
		\lsm & \norm{v}_{L_t^2 L_x^\I}\norm{w}_{L_t^\I H_x^{\frac12}} \normb{|\nabla|^{-\frac14}w}_{L_{t,x}^4}^2\norm{w}_{L_t^\I \dot H_x^{\frac12}}^2 \\
		& + \norm{v}_{L_t^2 L_x^\I}\norm{w}_{L_t^\I L_x^2} \norm{v}_{L_{t,x}^4}^2\norm{w}_{L_t^\I \dot H_x^{\frac12}}^2.
	}
	
	Combining the three findings on \eqref{5.7-a-f}, we get 
	\EQ{
		&\int_0^T|\eqref{esti:inter-mora-remainder-1-4d}| + |\eqref{esti:inter-mora-remainder-2-4d}| +|\eqref{esti:inter-mora-remainder-3-4d}|\dt\\
		\lsm & \norm{v}_{L_t^2 L_x^\I}\norm{w}_{L_t^\I H_x^{\frac12}} \normb{|\nabla|^{-\frac14}w}_{L_{t,x}^4}^2\norm{w}_{L_t^\I \dot H_x^{\frac12}}^2 \\
		& + \norm{v}_{L_t^2 L_x^\I}\norm{w}_{L_t^\I L_x^2} \norm{v}_{L_{t,x}^4}^2\norm{w}_{L_t^\I \dot H_x^{\frac12}}^2 \\
		& + \norm{v}_{L_t^2 L_x^\I} \norm{v}_{L_t^6 L_x^3}^3\norm{w}_{L_t^\I \dot H_x^{\frac12}}^2 \\
		& + \norm{\nabla v}_{L_t^2 L_x^\I}\norm{w}_{L_t^\I H_x^{\frac12}} \normb{|\nabla|^{-\frac14}w}_{L_{t,x}^4}^2 \norm{w}_{L_t^\I L_x^2}^2\\
		\lsm & \norm{w}_{L_t^\I H_x^{\frac12}} \normb{|\nabla|^{-\frac14}w}_{L_{t,x}^4}^2\brkb{\norm{v}_{L_t^2 L_x^\I}\norm{w}_{L_t^\I \dot H_x^{\frac12}}^2 + \norm{\nabla v}_{L_t^2 L_x^\I}\norm{w}_{L_t^\I L_x^2}^2} \\
		& + \norm{v}_{L_t^2 L_x^\I}\norm{w}_{L_t^\I \dot H_x^{\frac12}}^2\brkb{\norm{w}_{L_t^\I L_x^2} \norm{v}_{L_{t,x}^4}^2 + \norm{v}_{L_t^6 L_x^3}^3}.
	}
Therefore, we have
	\EQ{
		\norm{w}_{L_{t,x}^4}^4\lsm & \norm{w}_{L_t^\I L_x^2}^2 \norm{w}_{L_t^\I \dot H_x^{\frac12}}^2 \\
		&+ \norm{w}_{L_t^\I H_x^{\frac12}} \normb{|\nabla|^{-\frac14}w}_{L_{t,x}^4}^2\brkb{\norm{v}_{L_t^2 L_x^\I}\norm{w}_{L_t^\I \dot H_x^{\frac12}}^2 + \norm{\nabla v}_{L_t^2 L_x^\I}\norm{w}_{L_t^\I L_x^2}^2} \\
		& + \norm{v}_{L_t^2 L_x^\I}\norm{w}_{L_t^\I \dot H_x^{\frac12}}^2\brkb{\norm{w}_{L_t^\I L_x^2} \norm{v}_{L_{t,x}^4}^2 + \norm{v}_{L_t^6 L_x^3}^3}.
	}
	This completes the proof of this lemma.
\end{proof}
\subsection{Almost conservation law}\label{sec:energy-bound-4d}
 
\begin{prop}\label{prop:energy-bound-4d}
	Let $a\in\N$, $a> 10$, $\frac12-\frac12a<s<0$, $A>0$, $v=e^{it\De}v_0\in  Y\cap Z(\R)$ and $w$ be the solution of \eqref{eq:nls-w}. Take some $T>0$ such that $w\in C([0,T];H_x^1)$. Then, there exists $N_0=N_0(A)\gg1$ with the following properties. Assume that $\wh v_0$ is supported on $\fbrk{\xi\in\R^4:|\xi|\goe \frac12 N_0}$,
	\EQ{
		\norm{u_0}_{H_x^s}+\norm{v}_{Y\cap Z(\R)}\loe A\text{, }M(0)\loe AN_0^{-2s}\text{, and }E(0) \loe A N_0^{2(1-s)}.
	}
	Then, we have
	\EQn{\label{eq:energy-bound-4d}
		\sup_{t\in[0,T]}M(t)\loe 2A N_0^{-2s}\text{, and }\sup_{t\in[0,T]}E(t)\loe 2A N_0^{2(1-s)}.
	}
\end{prop}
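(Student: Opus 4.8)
The plan is to imitate the proof of Proposition~\ref{prop:energy-bound}, with Lemma~\ref{lem:inter-mora-4d} and the Gagliardo--Nirenberg inequality \eqref{GN-inequality} playing the role of the 3D interaction Morawetz input, and with the exponent condition $s>\frac34-\frac14 a$ replaced by $s>\frac12-\frac12 a$. So I fix a subinterval $I\subset[0,T]$ with $0\in I$ and run a bootstrap: assuming the a priori bounds $\sup_{t\in I}M(t)\loe 2AN_0^{-2s}$ and $\sup_{t\in I}E(t)\loe 2AN_0^{2(1-s)}$, it suffices to improve the constant $2$ to $\frac32$; together with the data bounds $M(0)\loe AN_0^{-2s}$, $E(0)\loe AN_0^{2(1-s)}$ and a standard continuity argument this gives \eqref{eq:energy-bound-4d}.

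First I collect the working estimates on $I$ (all space-time norms over $I\times\R^4$, implicit constants depending on $A$). Since $\wh v_0$ is supported in $\fbrk{|\xi|\goe\frac12 N_0}$ and $v\in Y\cap Z$, the smoothing bound behind Corollary~\ref{cor:Y-norm-Z-norm-4d} gives $\normb{\jb{\nabla}^l v}_{L_t^2 L_x^\I}\lsm N_0^{l-s-a+}$ for $0\loe l<s+a$; as $a>10$ and $s>\frac12-\frac12 a$ force $s+a>5$, the choices $l=0,1,2$ are admissible and produce small powers of $N_0$ (in particular $\norm{\De v}_{L_t^2 L_x^\I}\lsm N_0^{2-s-a+}\ll1$), while the remaining $v$-norms in $Y\cap Z$ are $\lsm 1$. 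From the a priori hypothesis, $\norm{w}_{L_t^\I L_x^2}\lsm N_0^{-s}$ and $\norm{w}_{L_t^\I\dot H_x^1}\lsm N_0^{1-s}$, hence by interpolation $\norm{w}_{L_t^\I\dot H_x^l}\lsm N_0^{l-s}$ for $0\loe l\loe1$. Feeding these into Lemma~\ref{lem:inter-mora-4d} — each remainder there carrying a factor $\norm{v}_{L_t^2 L_x^\I}$ or $\norm{\nabla v}_{L_t^2 L_x^\I}$, hence a gain $N_0^{-s-a+}$ — and using \eqref{GN-inequality} to write $\norm{w}_{L_t^6 L_x^3}^6\lsm\norm{w}_{L_t^\I H_x^{\frac12}}^2\normb{|\nabla|^{-\frac14}w}_{L_{t,x}^4}^4$, one arrives at an inequality of the shape
\[
\normb{|\nabla|^{-\frac14}w}_{L_{t,x}^4}^4\lsm N_0^{1-4s}+N_0^{\frac32-4s-a+}\brkb{\normb{|\nabla|^{-\frac14}w}_{L_{t,x}^4}^2+1}.
\]
Young's inequality then yields $\normb{|\nabla|^{-\frac14}w}_{L_{t,x}^4}^4\lsm N_0^{1-4s}+N_0^{3-8s-2a+}$, and since $s>\frac12-\frac12 a$ (that is, $a+2s>1$, strictly) the second term is dominated by the first; hence $\normb{|\nabla|^{-\frac14}w}_{L_{t,x}^4}\lsm N_0^{\frac{1-4s}{4}}$ and, by \eqref{GN-inequality}, $\norm{w}_{L_t^6 L_x^3}\lsm N_0^{\frac13-s}$.

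It remains to run the almost conservation argument. Arguing as in Lemma~\ref{lem:conservation-law} (in 4D), $\absb{\frac{\mathrm{d}}{\mathrm{d}t}M(t)}\lsm\absb{\int_{\R^4}(|u|^2u-|w|^2w)\wb w\dx}$ and $\absb{\frac{\mathrm{d}}{\mathrm{d}t}E(t)}\lsm\absb{\int_{\R^4}|u|^2u\,\De\wb v\dx}$. Writing $e=|u|^2u-|w|^2w=O\brkb{|v|(|w|^2+|v|^2)}$ and distributing H\"older so that every monomial absorbs the small factor $\norm{v}_{L_t^2 L_x^\I}$ — the typical term being $\int_I\absb{\int_{\R^4} v|w|^2\wb w\dx}\dt\lsm\norm{v}_{L_t^2 L_x^\I}\norm{w}_{L_t^6 L_x^3}^3$ — one gets $\int_I\absb{\frac{\mathrm{d}}{\mathrm{d}t}M}\dt\lsm_A N_0^{1-a-4s+}$. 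Similarly, placing $\De v$ in $L_t^2 L_x^\I$, $\int_I\absb{\frac{\mathrm{d}}{\mathrm{d}t}E}\dt\lsm_A\norm{\De v}_{L_t^2 L_x^\I}\brkb{\norm{w}_{L_t^6 L_x^3}^3+\norm{v}_{L_t^6 L_x^3}^3}\lsm_A N_0^{3-a-4s+}$. Because $a>10$ and $s>\frac12-\frac12 a$ give $a>1-2s$, we have $1-a-4s<-2s$ and $3-a-4s<2(1-s)$, so taking $N_0=N_0(A)$ sufficiently large makes both increments $\loe\frac12 AN_0^{-2s}$ and $\loe\frac12 AN_0^{2(1-s)}$, which upgrades the a priori bounds and closes the bootstrap.

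The main obstacle is the interaction Morawetz step: in Lemma~\ref{lem:inter-mora-4d} the quantity $\normb{|\nabla|^{-\frac14}w}_{L_{t,x}^4}$ appears on both sides, so after Young's inequality one must check that the self-interaction term $N_0^{3-8s-2a+}$ is controlled by the main term $N_0^{1-4s}$ — this is exactly the condition $a+2s>1$, so the hypothesis $s>\frac12-\frac12 a$ is sharp here. A secondary, purely bookkeeping, point is to verify that every power of $N_0$ produced in the mass (resp.\ energy) increment has exponent strictly below $-2s$ (resp.\ $2(1-s)$); this again rests on $a>10$, $s>\frac12-\frac12 a$, and the smoothing gain $N_0^{-s-a+}$ built into the ``narrowed'' randomization, which is what trades the low regularity of $v$ for large negative powers of $N_0$ in each $v$-heavy term.
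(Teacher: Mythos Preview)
Your proposal is correct and follows essentially the same approach as the paper: the bootstrap setup, the collection of $v$- and $w$-bounds (in particular $\norm{|\nabla|^l v}_{L_t^2L_x^\I}\lsm N_0^{l-s-a+}$), the application of Lemma~\ref{lem:inter-mora-4d} to get $\normb{|\nabla|^{-\frac14}w}_{L_{t,x}^4}^4\lsm N_0^{1-4s}$ via Young's inequality and the threshold $a+2s>1$, and the mass/energy increments $N_0^{1-a-4s+}$, $N_0^{3-a-4s+}$ all match the paper's computation. One small expository slip: the norm $\norm{w}_{L_t^6L_x^3}$ does not appear in the \emph{statement} of Lemma~\ref{lem:inter-mora-4d} (the Gagliardo--Nirenberg step \eqref{GN-inequality} is already absorbed into its proof), so you should invoke \eqref{GN-inequality} only later, when bounding the $\norm{w}_{L_t^6L_x^3}^3$ factor that arises in the mass and energy increments.
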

\begin{proof}
Let $N_0=N_0(A)$ that will be defined later. We implement a bootstrap procedure on $I\subset [0,T]$: assume an a priori bound 
\EQn{\label{eq:bound-w-hypothesis-4d}
	\sup_{t\in I}M(t)\loe 2A N_0^{-2s}\text{, and }\sup_{t\in I} E(t)\loe 2AN_0^{2(1-s)},
} 
then it suffices to prove that
\EQn{\label{eq:bound-w-bootstrap-4d}
	\sup_{t\in I}M(t)\loe \frac32A N_0^{-2s}\text{, and }\sup_{t\in I} E(t)\loe \frac32AN_0^{2(1-s)}.
}
From now on, all the space-time norms are taken over $I\times\R^4$. 

To start with, we collect useful estimates on $I$. Now, we use the notation $C=C(A)$ for short, and  the implicit constants in ``$\lesssim$'' depend on $A$. By interpolation, we have
\EQn{\label{eq:bound-v-ltinfty-4d}
	\norm{v}_{L_t^\I L_x^6} + \normb{\jb{\nabla}^{-\frac14}v}_{L_{t,x}^4} + \norm{v}_{L_t^6 L_x^3}\lsm\norm{v}_{Y\cap Z}\lsm1. 
}
By the frequency support of $v$, we have for any $0\loe l<a+s$,
\EQn{\label{eq:bound-v-l2linfty-4d}
\normb{|\nabla|^lv_N}_{l_N^2L_t^2 L_x^\I}\lsm N_0^{l-a-s+}\norm{v}_{ Y}\lsm N_0^{l-a-s+}\lsm 1.
} 
Note that we assume $a>10$ and $s>\frac12-\frac12a$, then $a + s > 2 $. Therefore, this guarantees that $\norm{\De v}_{L_t^2 L_x^\I} \lsm N_0^{2-a-s+}$ is allowed.
By bootstrap hypothesis \eqref{eq:bound-w-hypothesis-4d},
\EQn{\label{eq:bound-w-h1-4d}
	\norm{w}_{L_t^\I L_x^2}\lsm N_0^{-s}\text{, and } \norm{w}_{L_t^\I \dot H_x^1}\lsm N_0^{1-s}.
}
Then, by interpolation and \eqref{eq:bound-w-h1-4d}, we have for any $0\loe l \loe 1$, 
\EQn{\label{eq:bound-w-ltinfty-4d}
	\norm{w}_{L_t^\I \dot H_x^l}\lsm N_0^{l-s}.
}
Furthermore, by Lemma \ref{lem:inter-mora-4d}, \eqref{eq:bound-w-ltinfty-4d}, and \eqref{eq:bound-v-l2linfty-4d},
\EQ{
\normb{|\nabla|^{-\frac14}w}_{L_{t,x}^4}^4\lsm & \norm{w}_{L_t^\I L_x^2}^2 \norm{w}_{L_t^\I \dot H_x^{\frac12}}^2 \\
&+ \norm{w}_{L_t^\I H_x^{\frac12}} \normb{|\nabla|^{-\frac14}w}_{L_{t,x}^4}^2\brkb{\norm{v}_{L_t^2 L_x^\I}\norm{w}_{L_t^\I \dot H_x^{\frac12}}^2 + \norm{\nabla v}_{L_t^2 L_x^\I}\norm{w}_{L_t^\I L_x^2}^2} \\
& + \norm{v}_{L_t^2 L_x^\I}\norm{w}_{L_t^\I \dot H_x^{\frac12}}^2\brkb{\norm{w}_{L_t^\I L_x^2} \norm{v}_{L_{t,x}^4}^2 + \norm{v}_{L_t^6 L_x^3}^3} \\
\lsm & N_0^{1-4s} + N_0^{\frac12-s+}\normb{|\nabla|^{-\frac14}w}_{L_{t,x}^4}^2\brkb{N_0^{-a-s+}N_0^{1-2s} +N_0^{1-a-s+} N_0^{-2s}}\\
& + N_0^{1-2s} \brkb{N_0^{-s}+1}\\
\lsm & N_0^{1-4s} + N_0^{\frac32-a-4s+}\normb{|\nabla|^{-\frac14}w}_{L_{t,x}^4}^2.
}
Since $s>\frac12-\frac12 a$, by Young's inequality, we have
\EQn{\label{eq:bound-w-ltx4-4d}
	\norm{w}_{L_{t,x}^4}^4\lsm N_0^{1-4s}+ N_0^{\frac32-a-4s+} + N_0^{3-2a-8s+} \lsm N_0^{1-4s}.
}

Now, we are prepared to give the proof of \eqref{eq:bound-w-bootstrap-4d}.
We first consider the mass bound in \eqref{eq:bound-w-bootstrap-4d}. Note that similarly to Lemma \ref{lem:conservation-law},
\EQ{
		\absb{\frac{\mathrm{d}}{\mathrm{d}t}M(t)} \loe 2\absb{\int_{\R^4} \brkb{|u|^2u-|w|^2w} \wb w\dx}.
	}
Then by   H\"older's inequality and \eqref{GN-inequality}, we have that 
\EQ{
	\sup_{t\in I}M(t)\loe & M(0) + \int_I\absb{ \frac{\mathrm{d}}{\mathrm{d}t}M(t) }\dt\\
	\loe & M(0) + \int_I \absb{\int \wb w\brkb{|u|^2u-|w|^2w}\dx} \dt \\
	\loe & AN_0^{-2s} + C\norm{v}_{L_t^2 L_x^\I} (\norm{w}_{L_t^6 L_x^3}^3 + \norm{v}_{L_t^6 L_x^3}^3)\\
	\loe & A N_0^{2} + C \norm{v}_{L_t^2 L_x^\I} \brkb{ \norm{w}_{L_t^\I H_x^{\frac12}} \normb{|\nabla|^{-\frac14}w}_{L_{t,x}^4}^2+ \norm{v}_{L_t^2 L_x^\I}\norm{v}_{L_t^\I L_x^2}^2 }.
}
Therefore, combining with \eqref{eq:bound-v-ltinfty-4d}, \eqref{eq:bound-v-l2linfty-4d}, \eqref{eq:bound-w-ltinfty-4d}, and \eqref{eq:bound-w-ltx4-4d}, 
\EQn{\label{esti:mass-estimate-4d}
	\sup_{t\in I}M(t) \loe & A N_0^{-2s} + C(A) N_0^{-a-s + 1-3s+} \loe \frac{3}{2}A N_0^{-2s},
}
where we take $N_0=N_0(A)$ such that $C(A)N_0^{-a+1-2s+}\loe\frac12 A$. This is allowed since $s>\frac12-\frac12a$.

We then consider the energy bound in \eqref{eq:bound-w-bootstrap-4d}. Note that similarly to Lemma \ref{lem:conservation-law},
\EQ{
\absb{\frac{\mathrm{d}}{\mathrm{d}t}E(t)} \loe \absb{\int_{\R^4} |u|^2u\De \wb v\dx},
}
then by H\"older's inequality and \eqref{GN-inequality},
\EQ{
	\sup_{t\in I}E(t)\loe & E(0) + \int_I\absb{ \frac{\mathrm{d}}{\mathrm{d}t}E(t) }\dt\\
	\loe & E(0) + \int_I \absb{\int |u|^2u\cdot\De \wb v\dx} \dt\\
	\loe & A N_0^{2} + C\norm{\De v}_{L_t^2 L_x^\I} (\norm{w}_{L_t^6 L_x^3}^3 + \norm{v}_{L_t^6 L_x^3}^3)\\
	\loe & A N_0^{2} + C \norm{\De v}_{L_t^2 L_x^\I} \brkb{ \norm{w}_{L_t^\I H_x^{\frac12}} \normb{|\nabla|^{-\frac14}w}_{L_{t,x}^4}^2+ \norm{v}_{L_t^2 L_x^\I}\norm{v}_{L_t^\I L_x^2}^2 }.
}
Therefore, combining with \eqref{eq:bound-v-ltinfty-4d}, \eqref{eq:bound-v-l2linfty-4d}, \eqref{eq:bound-w-ltinfty-4d}, and \eqref{eq:bound-w-ltx4-4d}, 
\EQn{\label{esti:energy-estimate-4d}
	\sup_{t\in I}E(t) \loe & A N_0^{2(1-s)} + C(A) N_0^{2-a-s+1-3s+} \loe \frac{3}{2}A N_0^{2(1-s)},
}
where we still need to take $N_0=N_0(A)$ such that $C(A)N_0^{-a+1-2s+}\loe\frac12 A$. Therefore,  \eqref{esti:mass-estimate-4d} and \eqref{esti:energy-estimate-4d} gives \eqref{eq:bound-w-bootstrap-4d}. This finishes the proof of this proposition.
\end{proof}

\subsection{Perturbations}\label{sec:perturbation-4d}
Now, we consider the original energy critical equation:
\EQn{\label{eq:nls-w-original-4d}
	\left\{ \aligned
	&i\pd_t \wt w + \De \wt w =  |\wt w|^2\wt w, \\
	& \wt w(0,x) = \wt w_0,
	\endaligned
	\right.
}
where $\wt w(t,x):\R\times \R^4\ra \C$. Let $g(t,x) := w(t,x)-\wt w(t,x)$. Then, the equation for $g$ is
\EQn{
	\label{eq:nls-w-difference-4d}
	\left\{ \aligned
	&i\pd_t g + \De g =  F(g+v,\wt w), \\
	& g(0,x) = w_0 - \wt w_0.
	\endaligned
	\right.
}
Here we denote that
\EQ{
	F(g,w):=|g+w|^2 (g+w) - |w|^2 w.
}
Recall that
\EQ{
	\norm{w}_{X(I)}=\norm{\jb{\nabla}w}_{L_t^2 L_x^{4}(I\times \R^4)} + \norm{w}_{L_t^4 L_x^8(I\times \R^4)} + \norm{w}_{L_{t,x}^4(I\times \R^4)},
}
and
Recall that
\EQ{
	\norm{v}_{Y(I)}:=&\normb{\jb{\nabla}^{s+a-}v}_{L_t^2 L_x^\I(I\times \R^4)} + \norm{v}_{L_t^4 L_x^8(I\times \R^4)} + \norm{v}_{L_t^6 L_x^3(I\times \R^4)} \\
	& + \normb{\jb{\nabla}^{-\frac14}v}_{L_{t,x}^4(I\times \R^4)}.
}
\begin{lem}\label{lem:short-time-4d}
	Let $a\in\N$, $a> 10$, $\frac12-\frac12a<s<0$, $I\subset \R$, and $0\in I$. Then, there exists $0<\eta_1\ll1$ with the following properties. Let $\wt w\in C\brkb{I;H_x^1(\R^4)}$ be the solution of \eqref{eq:nls-w-original} on $I$, satisfying
	\EQ{
		\norm{w_0-\wt w_0}_{\dot H_x^1(\R^3)}\loe \eta_1\text{, and } \norm{\wt w}_{X(I)}\loe \eta_1.
	}
	For any $0<\eta\loe\eta_1$, suppose that
	\EQ{
		\norm{v}_{Y(I)}\loe \eta,
	}
	then there exists a solution $w\in C\brkb{I; H_x^1(\R^4)}$ of \eqref{eq:nls-w} with initial data $w_0$ such that
	\EQ{
		\norm{w-\wt w}_{L_t^\I  H_x^1(I\times\R^4)} + \norm{w-\wt w}_{X(I)} \loe C_0\brkb{\norm{w_0-\wt w_0}_{ H_x^1(\R^4)} + \eta},
	}
	where $C_0>1$ is an absolute constant independent of $\eta$, $\eta_1$ and $I$.
\end{lem}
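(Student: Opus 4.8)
The plan is to prove Lemma \ref{lem:short-time-4d} by the same Picard/contraction scheme already used in Lemma \ref{lem:local-4d} and in the $3$D short-time perturbation Lemma \ref{lem:short-time}, now applied to the difference equation \eqref{eq:nls-w-difference-4d}. Write $g$ via the Duhamel formula
\EQ{
g = e^{it\De}g(0) - i\int_0^t e^{i(t-s)\De}F(g+v,\wt w)\ds,
}
and set up a fixed-point argument for $g$ in the space $C(I;H_x^1)\cap X(I)$. The first step is the pointwise algebra: since the nonlinearity here is cubic, one has
\EQ{
\abs{F(g+v,\wt w)}\lsm (\abs{g}+\abs{v})(\abs{g}^2+\abs{v}^2+\abs{\wt w}^2),
}
and a similar bound for $\abs{\nabla F(g+v,\wt w)}$ with the extra terms where the derivative lands on $\wt w$ together with a factor $(\abs{g}+\abs{v})\abs{\wt w}$. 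This is the $4$D analogue of the two displayed pointwise inequalities in Lemma \ref{lem:short-time}, just with fourth powers replaced by squares.

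Next I would run the Strichartz estimate (Lemma \ref{lem:strichartz}) exactly as in Lemma \ref{lem:local-4d}. The key observation, already noted in the excerpt, is that $s+a>1$ since $a>10$ and $s>\frac12-\frac12 a$, so that $v$ being high-frequency gives $\norm{v}_{L_t^2 L_x^\I}+\norm{\nabla v}_{L_t^2 L_x^\I}\lsm\norm{v}_{Y}$; this lets me absorb the $v$-terms into the $Y$-norm. Then, pairing $\jb{\nabla}g$ with the admissible pair $(2,4)$ and using the Hölder splitting into $L_t^2L_x^4$, $L_t^4L_x^8$ (and $\norm{v}_{L_t^2L_x^\I}$ for the pieces carrying $v$), one obtains a nonlinear estimate schematically of the form
\EQ{
\norm{g}_{L_t^\I H_x^1\cap X}\lsm \norm{g(0)}_{H_x^1} + (\norm{g}_X+\eta)(\norm{g}_X^2+\eta^2+\eta_1^2) + \eta_1\brk{\norm{g}_X^2+\eta^2+\eta_1^2 + (\norm{g}_X+\eta)\eta_1},
}
which simplifies to $\norm{g}_{L_t^\I H_x^1\cap X}\lsm \norm{g(0)}_{H_x^1} + (\norm{g}_X+\eta_1)\norm{g}_X^2 + \eta_1^2\norm{g}_X + \eta_1^2\eta$. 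A parallel computation for the difference $\Phi(g_1)-\Phi(g_2)$ gives the contraction factor, controlled by a small power of $\eta_1$. Choosing $\eta_1$ sufficiently small (absolutely) then closes the standard continuity/fixed-point argument and yields both the existence of $w=g+\wt w$ on $I$ and the bound $\norm{w-\wt w}_{L_t^\I H_x^1(I)}+\norm{w-\wt w}_{X(I)}\loe C_0(\norm{w_0-\wt w_0}_{H_x^1}+\eta)$ with $C_0$ absolute.

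The main (though mild) obstacle is bookkeeping rather than any new idea: one must make sure every Hölder product of three factors lands on admissible Lebesgue exponents in $4$D — in particular that the $L_t^4L_x^8$ and $L_{t,x}^4$ norms appearing in $X$ and $Y$ are genuinely the ones produced by the cubic nonlinearity when one factor is measured in $L_t^2L_x^4$ (for $\jb{\nabla}g$ or $\jb{\nabla}\wt w$) or when $v$ is pulled out in $L_t^2L_x^\I$ — and that the resulting powers of $\eta_1$ are all strictly positive so the continuity argument closes. Since the nonlinearity is exactly cubic (no fractional-power subtlety) and $v$ is high-frequency so its derivative costs nothing in the relevant norm, this is entirely routine and mirrors Lemma \ref{lem:short-time} line by line; I would simply state the pointwise bounds, invoke Lemma \ref{lem:strichartz}, quote the resulting two nonlinear estimates, and finish by the standard continuity argument, exactly as in the $3$D case.
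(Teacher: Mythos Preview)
Your proposal is correct and follows essentially the same route as the paper: Duhamel for $g$, the cubic pointwise bounds for $F$ and $\nabla F$, the observation $s+a>1$ to control $\norm{v}_{L_t^2L_x^\infty}+\norm{\nabla v}_{L_t^2L_x^\infty}$ by $\norm{v}_Y$, the H\"older/Strichartz estimates with the $(2,4)$--$(4,8)$--$(4,4)$ splitting, and the resulting inequality $\norm{g}_{L_t^\infty H_x^1\cap X}\lsm \norm{g(0)}_{H_x^1}+(\norm{g}_X+\eta_1)\norm{g}_X^2+\eta_1^2\norm{g}_X+\eta_1^2\eta$ closed by the standard continuity argument. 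The paper does not write out the contraction step separately but simply invokes the continuity argument at the end, which is equivalent to what you describe.
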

\begin{proof}
	Let $g$ be the solution of \eqref{eq:nls-w-difference}, $0=\inf I$, and we restrict the time interval on $I$. Then, we have
	\EQ{
		g=e^{it\De}g(0)-i\int_0^te^{i(t-s)\De}F(g+v,\wt w)\ds.
	}
	Note that we have the pointwise estimate
	\EQ{
		\abs{F(g+v,\wt w)} = & \abs{|g+v+\wt w|^2(g+v+\wt w)-|\wt w|^2\wt w}\\
		\lsm & (|g| + |v|)(|g|^2 + |v|^2 + |\wt w|^2 ),
	}
	and
	\EQ{
		\abs{\nabla F(g+v,\wt w)}=&\abs{\nabla \brk{|g+v+\wt w|^2(g+v+\wt w)-|\wt w|^2\wt w}}\\
		\lsm& \brk{\abs{\nabla g} + \abs{\nabla v}}\brk{|g|^2 + |v|^2 + |\wt w|^2} + \abs{\nabla \wt w}\brk{|g|^2 + |v|^2}\\
		& + \abs{\nabla \wt w}\brk{|g| + |v|}|\wt w|.
	}
Note that by the definition of $a$ and $s$, we have $s+a>1$. Therefore, since $v$ is high-frequency,
\EQ{
	\norm{v}_{L_t^2 L_x^\I}+ \norm{\nabla v}_{L_t^2 L_x^\I}\lsm \norm{v}_{Y}.
} 
	Then, using the similar argument in Lemma \ref{lem:local-4d},
	\EQ{
		\norm{F(g+v,\wt w)}_{L_t^1 L_x^2} \lsm & \norm{g}_{L_t^2 L_x^4}(\norm{g}_{L_t^4 L_x^{8}}^2 + \norm{v}_{L_t^4 L_x^{8}}^2 + \norm{\wt w}_{L_t^4 L_x^{8}}^2) \\
		& + \norm{v}_{L_t^2 L_x^\I}(\norm{g}_{L_{t,x}^4}^2 + \norm{v}_{L_{t,x}^4}^2 + \norm{\wt w}_{L_{t,x}^4}^2) \\
		\lsm & (\norm{g}_{X} + \norm{v}_{Y})(\norm{g}_{X}^2 + \norm{v}_{Y}^2 + \norm{\wt w}_{X}^2),
	}
	and
	\EQ{
		\norm{\nabla F(g+v,\wt w)}_{L_t^1 L_x^2} \lsm & \norm{\nabla g}_{L_t^2 L_x^4}(\norm{g}_{L_t^4 L_x^{8}}^2 + \norm{v}_{L_t^4 L_x^{8}}^2 + \norm{\wt w}_{L_t^4 L_x^{8}}^2) \\
		& + \norm{\nabla v}_{L_t^2 L_x^\I}(\norm{g}_{L_{t,x}^4}^2 + \norm{v}_{L_{t,x}^4}^2 + \norm{\wt w}_{L_{t,x}^4}^2) \\
		& +  \norm{\nabla \wt w}_{L_t^2 L_x^4}(\norm{g}_{L_t^4 L_x^{8}}^2 + \norm{v}_{L_t^4 L_x^{8}}^2) \\
		& + \norm{\nabla \wt w}_{L_t^2 L_x^4}(\norm{g}_{L_t^4 L_x^{8}} + \norm{v}_{L_t^4 L_x^{8}})\norm{\wt w}_{L_t^4 L_x^{8}} \\
		\lsm & (\norm{g}_{X} + \norm{v}_{Y})(\norm{g}_{X}^2 + \norm{v}_{Y}^2 + \norm{\wt w}_{X}^2) \\
		& + \norm{\wt w}_{X}(\norm{g}_{X}^2 + \norm{v}_{Y}^2 + \norm{g}_{X}\norm{\wt w}_{X} + \norm{v}_{Y}\norm{\wt w}_{X}).
	}
	Therefore, by Lemma \ref{lem:strichartz},
	\EQ{
		\norm{g}_{L_t^\I H_x^1 \cap X} \lsm & \norm{g(0)}_{ H_x^1} + \norm{\jb{\nabla} F(g+v,\wt w)}_{L_t^1 L_x^2}  \\
		\lsm & \norm{g(0)}_{ H_x^1} + (\norm{g}_{X} + \norm{v}_{Y})(\norm{g}_{X}^2 + \norm{v}_{Y}^2 + \norm{\wt w}_{X}^2) \\
		& + \norm{\wt w}_{X}(\norm{g}_{X}^2 + \norm{v}_{Y}^2 + \norm{g}_{X}\norm{\wt w}_{X} + \norm{v}_{Y}\norm{\wt w}_{X}) \\
		\lsm & \norm{g(0)}_{ H_x^1} + \brkb{\norm{g}_{X} + \eta}\brkb{\norm{g}_{X}^2 + \eta^2 + \eta_1^2} \\
		&+ \eta_1\brkb{\norm{g}_{X}^2 + \eta^2 + \norm{g}_{X}\eta_1 + \eta\eta_1} \\
		\lsm & \norm{g(0)}_{ H_x^1} + \norm{g}_{X}^3 + \eta_1^2\norm{g}_{X} + \eta\norm{g}_{X}^2 +\eta\eta_1^2 + \eta_1\norm{g}_{X}^2,
	}
	then we have
	\EQ{
		\norm{g}_{L_t^\I H_x^1 \cap S} \lsm & \norm{g(0)}_{ H_x^1} + \brkb{\norm{g}_{X}+\eta_1}\norm{g}_{X}^2 + \eta_1^2 \norm{g}_{X} + \eta_1^2\eta.
	}
	Then, this lemma follows by the standard continuity argument.
\end{proof}
\begin{lem}\label{lem:long-time-4d}
	Suppose that $a\in\N$, $a> 10$, $\frac12-\frac12a<s<0$, $M_0>1$, $I\subset\R$, and $0\in I$. Let $\wt w\in C\brkb{I;H_x^1(\R^4)}$ be the solution of \eqref{eq:nls-w-original} on $I$ with
	\EQ{
		\wt w_0=w_0\text{, and }\norm{\wt w}_{X(I)}\loe M_0.
	}
	Let $0<\eta_1\ll1$ and $C_0>1$ be defined as in Lemma \ref{lem:short-time-4d}. Then, there exists $\eta_2=\eta_2(C_0,M_0,\eta_1)>0$ such that if $v$ satisfies
	\EQ{
		\norm{v}_{Y(I)}\loe \eta_2,
	}
	then there exists a solution $w\in C\brkb{I;\dot H_x^1(\R^4)}$ of \eqref{eq:nls-w} with initial data $w_0$ such that
	\EQn{\label{eq:long-time-4d}
		\norm{w-\wt w}_{L_t^\I  H_x^1(I\times\R^4)} + \norm{w-\wt w}_{X(I)} \loe C(C_0,M_0,\eta_1)\eta_2.
	}
\end{lem}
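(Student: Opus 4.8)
The plan is to repeat, essentially verbatim, the argument used to prove the 3D counterpart Lemma \ref{lem:long-time}, replacing the quintic short-time perturbation statement by the cubic one, Lemma \ref{lem:short-time-4d}, which will serve as the building block. First I would decompose the interval into finitely many consecutive subintervals $I=\cup_{j=1}^{J}I_j$ chosen so that on each piece $\frac12\eta_1\loe\norm{\wt w}_{X(I_j)}\loe\eta_1$, where $\eta_1$ is the threshold furnished by Lemma \ref{lem:short-time-4d}; since $\norm{\wt w}_{X(I)}\loe M_0$, the number of pieces is controlled, $J=J(M_0,\eta_1)$. Write $t_{j-1}=\inf I_j$ and normalize $t_0=0$. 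Then I would fix a parameter $\eta_2=\eta_2(C_0,M_0,\eta_1)>0$ so small that $(2C_0)^{J}\eta_2\loe\eta_1$, and take $v$ with $\norm{v}_{Y(I)}\loe\eta_2$, so that in particular $\norm{v}_{Y(I_j)}\loe\eta_2$ on every subinterval.

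On the first interval $I_1$ we have $w(0)-\wt w(0)=w_0-\wt w_0=0$, so Lemma \ref{lem:short-time-4d} applied with $\eta=\eta_2$ yields a solution $w$ of \eqref{eq:nls-w} on $I_1$ together with the bound $\norm{w-\wt w}_{L_t^\I H_x^1(I_1\times\R^4)}+\norm{w-\wt w}_{X(I_1)}\loe C_0\eta_2\loe 2C_0\eta_2$. I would then run an induction on $k=1,\dots,J$ whose claim is $\norm{w-\wt w}_{L_t^\I H_x^1(I_k\times\R^4)}+\norm{w-\wt w}_{X(I_k)}\loe(2C_0)^k\eta_2$. Assuming this for $k=j<J$, the choice of $\eta_2$ gives $\norm{w(t_j)-\wt w(t_j)}_{H_x^1(\R^4)}\loe(2C_0)^j\eta_2\loe(2C_0)^{J}\eta_2\loe\eta_1$, so Lemma \ref{lem:short-time-4d} applies on $I_{j+1}$ (after translating time so that it starts at $t_j$), with $\eta=\eta_2$ and with $\wt w$ being exactly the solution of \eqref{eq:nls-w-original-4d} restricted to $I_{j+1}$, giving the existence of $w$ on $I_{j+1}$ and
$$\norm{w-\wt w}_{L_t^\I H_x^1(I_{j+1}\times\R^4)}+\norm{w-\wt w}_{X(I_{j+1})}\loe C_0\brkb{(2C_0)^j\eta_2+\eta_2}\loe(2C_0)^{j+1}\eta_2,$$
which closes the induction. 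Summing the bound over $k=1,\dots,J$ then produces a solution $w$ on all of $I$ and establishes \eqref{eq:long-time-4d} with $C(C_0,M_0,\eta_1)=\sum_{k=1}^{J}(2C_0)^k$.

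The only delicate point — the main obstacle, such as it is — is guaranteeing that the size of the perturbation of the data remains below the threshold $\eta_1$ at every gluing time $t_j$, since each application of the short-time lemma may amplify it by the factor $2C_0$. This is precisely what the exponential-in-$J$ smallness requirement $(2C_0)^{J}\eta_2\loe\eta_1$ buys, and it explains why $\eta_2$ is allowed to depend on $M_0$ (through $J$), on $C_0$, and on $\eta_1$. I would also remark that, although the conclusion is stated with $C(I;\dot H_x^1(\R^4))$ regularity, since here $w_0=\wt w_0$ and $\wt w\in C(I;H_x^1(\R^4))$, the constructed $w=\wt w+(w-\wt w)$ in fact lies in $C(I;H_x^1(\R^4))$, which is consistent with (and slightly stronger than) the stated regularity.
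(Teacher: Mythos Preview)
Your proposal is correct and follows exactly the approach the paper intends: the paper explicitly writes ``The proof of Lemma \ref{lem:long-time-4d} is the same as Lemma \ref{lem:long-time}, so we omit the details,'' and what you have written is precisely that argument transplanted to the 4D setting with Lemma \ref{lem:short-time-4d} in place of Lemma \ref{lem:short-time}. Your remark about $C(I;H_x^1)$ versus $C(I;\dot H_x^1)$ is also apt and consistent with the paper's 3D treatment.
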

The proof of Lemma \ref{lem:long-time-4d} is the same as Lemma \ref{lem:long-time}, so we omit the details.

\subsection{Proof of Proposition \ref{prop:global-derterministic-4d}}\label{sec:global-scattering-4d}
Using the classical result in \cite{RV07AJM}, we can obtain
\begin{lem}\label{lem:energy-critical-classical-4d}
	Suppose that $\wt w_0 \in H^1(\R^4)$. Then, the equation \eqref{eq:nls-w-original} is globally well-posed and scatters, and the solution $\wt w\in C\brko{\R; H^1(\R^4)}$ satisfies
	\EQ{
		\norm{\wt w}_{X(\R)} \loe C(\norm{\wt w_0}_{ H_x^1(\R^3)}).
	}
\end{lem}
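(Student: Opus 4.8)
The plan is to deduce this from the classical global theory for the four-dimensional defocusing energy-critical NLS and then to upgrade the resulting scattering-norm bound to a bound on the full $X(\R)$ norm, exactly as in the proof of Lemma~\ref{lem:energy-critical-classical}.

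First I would invoke the theorem of Ryckman and Visan \cite{RV07AJM} (see also \cite{Vis07Duke}): for every $\wt w_0\in H^1(\R^4)$ the equation \eqref{eq:nls-w-original-4d} has a unique global solution $\wt w\in C(\R;H^1(\R^4))$ which scatters in $H^1$, and it obeys the global spacetime bound
\[
\norm{\wt w}_{L_{t,x}^6(\R\times\R^4)}\loe C\brkb{\norm{\wt w_0}_{H_x^1(\R^4)}},
\]
with $6=\frac{2(d+2)}{d-2}$ at $d=4$ the energy-critical scattering exponent. Conservation of mass and energy (the latter controlling $\norm{\nabla\wt w}_{L_x^2}$ and, via $H^1(\R^4)\hra L^4(\R^4)$, also $\norm{\wt w}_{L_x^4}$) additionally gives $\norm{\wt w}_{L_t^\I H_x^1(\R\times\R^4)}\lsm C(\norm{\wt w_0}_{H_x^1})$.

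Next, choosing a small $\ep>0$ depending only on the absolute constants in the Strichartz and Hölder inequalities, I would use the finiteness of $\norm{\wt w}_{L_{t,x}^6(\R)}$ to split $\R=\cup_{j=1}^J I_j$ into $J=J(\norm{\wt w_0}_{H_x^1},\ep)$ consecutive intervals with $\norm{\wt w}_{L_{t,x}^6(I_j)}\loe\ep$. On each $I_j=[t_{j-1},t_j]$ I would run the Duhamel formula based at $t_{j-1}$ in the norms defining $X(I_j)$, using Lemma~\ref{lem:strichartz} together with the pointwise bound $\abs{\nabla(|\wt w|^2\wt w)}\lsm\abs{\wt w}^2\abs{\nabla\wt w}$ and Hölder's inequality; interpolating factors of $\wt w$ against the scattering norm $\norm{\wt w}_{L_{t,x}^6(I_j)}\loe\ep$ (and absorbing the $L_t^\I H_x^1$ bound) produces a gain of a positive power of $\ep$ in the nonlinear estimate, so a continuity argument closes on $I_j$ and yields $\norm{\wt w}_{X(I_j)}\loe C(\norm{\wt w_0}_{H_x^1})$ uniformly in $j$ --- this is just the energy-critical local theory, now seen to iterate because the scattering norm is globally finite. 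Summing these bounds over $j=1,\ldots,J$ gives $\norm{\wt w}_{X(\R)}\loe C(\norm{\wt w_0}_{H_x^1})$.

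The argument contains no real obstacle: the deep input, namely the global-in-time scattering-norm bound, is quoted from \cite{RV07AJM,Vis07Duke}, and everything else is the same bookkeeping used for Lemma~\ref{lem:energy-critical-classical}, the only change being that the 3D scattering norm $L_{t,x}^{10}$ is replaced by the 4D one $L_{t,x}^6$ and the cubic nonlinearity is handled in place of the quintic one.
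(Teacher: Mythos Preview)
Your proposal is correct and matches the paper's approach: the paper does not spell out a proof for this 4D lemma but simply cites \cite{RV07AJM}, implicitly pointing to the same argument it wrote out for the 3D analogue (Lemma~\ref{lem:energy-critical-classical}), namely quoting the global $L_{t,x}^{\frac{2(d+2)}{d-2}}$ bound, partitioning $\R$ into intervals where this norm is small, and recovering the $X$-norm on each piece via Strichartz. Your write-up is exactly this with the 4D exponents and cubic nonlinearity substituted, so there is nothing to add.
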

Then, we are able to prove the global well-posedness of $w\in C(\R;H_x^1(\R^4))$, using the same argument in Section \ref{sec:global-scattering}. Moreover, we have
\EQ{
\norm{w}_{L_t^\I(\R;H_x^1(\R^4))}+\norm{w}_{X(\R)} \loe C(A).
}

Next, we prove the scattering statement. We only consider the $t\ra+\I$ case, and it suffices to prove that
\EQn{\label{eq:scattering-space-time-4d}
	\normb{\jb{\nabla} \int_0^\I e^{-is\De}(|u|^2u)\dx}_{L_x^2}\loe C(A).
}
Now, all the space-time norms are taken over $[0,+\I)\times \R^4$. From previous argument,  
\EQ{
	\norm{w}_{X([0,+\I))}\loe C(A).
}
Recall also that 
\EQ{
	\norm{v}_{Y(\R)}\loe A.
}
Now, we can prove \eqref{eq:scattering-space-time-4d} using the argument in Lemma \ref{lem:local-4d},
\EQ{
	\text{L.H.S. of }\eqref{eq:scattering-space-time-4d} \lsm & \normb{ \int_0^\I e^{-is\De}( |u|^2u)\ds}_{L_x^2} + \normb{ \int_0^\I e^{-is\De}(\nabla wu^2)\ds}_{L_x^2} \\
	& + \normb{ \int_0^\I e^{-is\De}(\nabla v u^2)\ds}_{L_x^2}\\
	\lsm & \norm{|u|^2u}_{L_t^1 L_x^2} + \norm{\nabla wu^2}_{L_t^1 L_x^2} + \norm{\nabla vu^2}_{L_t^1 L_x^2}\\
	\lsm & \norm{\jb{\nabla}w}_{L_t^2 L_x^4}\norm{u}_{L_t^4 L_x^{8}}^2 + \norm{\jb{\nabla}v}_{L_t^2 L_x^\I}\norm{u}_{L_{t,x}^4}^2\\
	\loe & C(A).
}
This finishes the proof of scattering statement.

\end{document}